\numberwithin{equation}{section}
\newtheoremstyle{fancy1}{10pt}{10pt}{\itshape}{12pt}{\textsc\bgroup}{.\egroup}{8pt}{}
\newtheoremstyle{fancy2}{10pt}{10pt}{}{12pt}{\itshape}{.}{8pt}{ }
\theoremstyle{fancy1}
\newtheorem{cor}[equation]{Corollary}
\newtheorem{lem}[equation]{Lemma}
\newtheorem{prop}[equation]{Proposition}
\newtheorem{thm}[equation]{Theorem}
\newtheorem{problem}[equation]{Problem}
\newtheorem{main}{Theorem}
\newtheorem*{main*}{Theorem}
\newtheorem*{conjecture}{Conjecture}
\newtheorem*{cor*}{Corollary}
\theoremstyle{fancy2}
\newtheorem{definition}[equation]{Definition}
\newtheorem{rem}[equation]{Remark}
\newtheorem*{rem*}{Remark}
\newtheorem{example}[equation]{Example}
\newcommand{\cref}[1]{Corollary~\ref{#1}}
\newcommand{\Sph}{\mathbb{S}}
\newcommand{\Disc}{\mathbb{D}}
\newcommand{\W}{\mathsf{W}}
\newcommand{\N}{\mathsf{N}}
\newcommand{\RP}{\mathbb{R\mkern1mu P}}
\newcommand{\CP}{\mathbb{C\mkern1mu P}}
\newcommand{\HP}{\mathbb{H\mkern1mu P}}
\newcommand{\C}{{\mathbb{C}}}
\newcommand{\R}{{\mathbb{R}}}
\newcommand{\Z}{{\mathbb{Z}}}
\renewcommand{\H}{\ensuremath{\operatorname{\mathsf{H}}}}
\newcommand{\E}{\ensuremath{\operatorname{\mathsf{E}}}}
\newcommand{\F}{\ensuremath{\operatorname{\mathsf{F}}}}
\newcommand{\G}{\ensuremath{\operatorname{\mathsf{G}}}}
\newcommand{\D}{\ensuremath{\operatorname{\mathsf{D}}}}
\newcommand{\SO}{\ensuremath{\operatorname{\mathsf{SO}}}}
\renewcommand{\O}{\ensuremath{\operatorname{\mathsf{O}}}}
\newcommand{\Sp}{\ensuremath{\operatorname{\mathsf{Sp}}}}
\newcommand{\U}{\ensuremath{\operatorname{\mathsf{U}}}}
\newcommand{\SU}{\ensuremath{\operatorname{\mathsf{SU}}}}
\newcommand{\Spin}{\ensuremath{\operatorname{\mathsf{Spin}}}}
\newcommand{\T}{\ensuremath{\operatorname{\mathsf{T}}}}
\renewcommand{\S}{\ensuremath{\operatorname{\mathsf{S}}}}
\newcommand{\M}{\ensuremath{\operatorname{\mathsf{M}}}}
\newcommand{\A}{\ensuremath{\operatorname{\mathsf{A}}}}
\newcommand{\CC}{\ensuremath{\operatorname{\mathsf{C}}}}
\newcommand{\B}{\ensuremath{\operatorname{\mathsf{B}}}}
\renewcommand{\r}{\ensuremath{\operatorname{\mathsf{r}}}}
\renewcommand{\a}{\ensuremath{\operatorname{\mathsf{a}}}}
\newcommand{\w}{\ensuremath{\operatorname{\mathsf{w}}}}
\renewcommand{\u}{\ensuremath{\operatorname{\mathsf{u}}}}
\renewcommand{\v}{\ensuremath{\operatorname{\mathsf{v}}}}
\renewcommand{\1}{\ensuremath{\operatorname{\mathsf{1}}}}
\newcommand{\g}{\ensuremath{\operatorname{\mathsf{g}}}}
\newcommand{\h}{\ensuremath{\operatorname{\mathsf{h}}}}
\def\con#1=#2(#3){#1 \equiv #2 \bmod{#3}}
\newcommand{\rank}{\ensuremath{\operatorname{rk}}}
 \DeclareMathOperator{\Fix}{Fix}
\newcommand{\no}{\noindent}
\newcommand{\K}{\mathsf{K}}
\renewcommand{\L}{\mathsf{L}}
\renewcommand{\F}{\mathsf{F}}
\begin{document}

\date{\today}

\title{Tits Geometry and Positive Curvature}

\author{Fuquan Fang}
\address{Department of Mathematics\\
Capital Normal University\\
Beijing 100048
\\China}
\email{fuquan\_fang@yahoo.com }

\author{Karsten Grove}
\address{Department of Mathematics\\
University of Notre Dame\\
      Notre Dame, IN 46556\\USA
      }
\email{kgrove2@nd.edu}

\author{Gudlaugur Thorbergsson}
\address{Mathematisches Institut\\
Universit\"at zu K\"oln\\
Weyertal 86-90\\
50931 K\"oln\\ Germany
      }
\email{gthorber@mi.uni-koeln.de }

\thanks{The first author is supported in part by an NSFC grant and he is grateful to the University of Notre Dame for its hospitality.
The second author is supported in part by an NSF grant, a research chair at the Hausdorff Center at the University of Bonn and by a Humboldt research award. The third author is grateful to the University
of Notre Dame and the Capital Normal University in Beijing for their
hospitality. }

\begin{abstract}
There is a well known link between (maximal)  polar representations
and isotropy representations of  symmetric spaces provided by Dadok.
Moreover, the theory by Tits and Burns-Spatzier provides a link
between irreducible symmetric spaces of noncompact type of rank at
least three and irreducible topological spherical buildings of rank
at least three.

We discover and exploit a rich structure of a (connected) chamber system  of finite (Coxeter) type $\M$ associated with any polar action of
 cohomogeneity at least two on any simply connected closed positively curved manifold. Although this chamber system is typically not a Tits
 geometry of type $\M$, its universal Tits cover indeed is a building in all but two exceptional cases.  We construct a topology on this
  universal cover making it into a compact spherical building in the sense of Burns and Spatzier. Using this structure we classify up to equivariant diffeomorphism all polar
  actions on (simply connected) positively curved manifolds of cohomogeneity at least two.
\end{abstract}

\maketitle

The interest in positively curved manifolds goes back to the
beginning of Riemannian geometry or even to spherical and projective
geometry. Likewise, the program of Tits to provide an axiomatic
description of geometries whose automorphism group is a noncompact
simple algebraic or Lie group  goes back to projective geometry.

The presence of symmetries has played a significant role in the
study of positively curved manifolds during the past two decades;
see, e.g.,~the surveys \cite{Gr, Wi1, Zi}. Not only has this resulted
in a number of classification type theorems, it has also lead to new
insights about structural properties, see, e.g.,~\cite{VZ, Wi3}, as
well as to the discovery and construction of a new example
\cite{De,GVZ}.

Unlike \cite{GWZ}, our
work here is not motivated by the quest for new examples. On
the contrary, we wish to explore \emph{rigidity properties} of
special actions on positively curved manifolds whose \emph{linear
counterparts} by work of Dadok
 \cite{Da}, Cartan (see \cite{He}), Tits \cite{Ti1}, and Burns and Spatzier \cite{BSp} ultimately are
 described axiomatically via so-called \emph{compact spherical buildings}.

The special actions we investigate are the so-called \emph{polar
actions}, i.e., isometric actions for which there is an (immersed)
submanifold, a so-called \emph{section}, that meets all orbits
orthogonally.  Such actions form a particularly simple, yet very
rich and interesting class of manifolds and actions closely related
to the transformation group itself. The concept goes back to
isotropy representations of symmetric spaces. Also,  as a special
case, the adjoint action of a compact Lie group on itself is polar
with section a maximal torus. Its extension to general manifolds was
pioneered by  Szenthe in \cite{Sz} and independently by Palais and
Terng in  \cite{PTe},  and has recently been  further developed
 in \cite{GZ}.
 Since the action by the identity component of a polar action is itself polar, we \emph{assume throughout} without further comments that \emph{our
  group is connected}. An exceptional but important special case is that of cohomogeneity one actions and manifolds, i.e., actions with 1-dimensional orbit space.

The \emph{exceptional case} of positively curved cohomogeneity one
manifolds was studied in \cite{GWZ} and \cite{Ve}. Aside from the rank one
symmetric spaces, this also includes infinite families of other
manifolds, most of which are not homogeneous even up to homotopy. In
contrast, our main result here is the following:

\begin{main} \label{A}
A polar action on a simply connected, compact, positively curved
manifold of cohomogeneity at least two is equivariantly
diffeomorphic to a polar action on a compact  rank one symmetric
space.
\end{main}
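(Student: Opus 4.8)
The plan is to reduce the classification of polar actions in the positively curved setting to the known classification of polar actions on rank one symmetric spaces, using the principal orbit and its associated building. First I would pass to a singular stratum analysis: for a polar action of cohomogeneity $k\ge 2$ on $M$, the section $\Sigma$ is a totally geodesic submanifold of dimension $k$, and the generalized Weyl group $W$ acting on $\Sigma$ is a finite reflection group; since $M$ is positively curved, $\Sigma$ is itself positively curved of dimension $\ge 2$, so $\Sigma$ is (a quotient of) a sphere and $W$ is a spherical Coxeter group. The heart of the argument is to build, from the collection of singular isotropy subgroups along the faces of a Weyl chamber in $\Sigma$, a \emph{chamber system} which one shows is a (thick or thin) spherical building; here one invokes the circle of ideas of Tits \cite{Ti1} and Burns--Spatzier \cite{BSp} as quoted in the introduction, together with Dadok's classification \cite{Da} identifying the local picture at a point with the isotropy representation of a symmetric space. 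The positive curvature hypothesis enters crucially to force the building to be of spherical type and of rank exactly $k$, and to rule out Euclidean or exotic behavior; it also, via Wilking's connectedness lemma and the structure of the fixed point sets of singular isotropy groups, forces the building to be \emph{irreducible} unless $M$ splits as a join, which cannot happen for a positively curved manifold of dimension $\ge 2$.

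Next I would exploit the Tits geometry so obtained: an irreducible thick spherical building of rank $k\ge 2$ is, by Tits' classification, the building of a simple algebraic or Lie group $G'$, and the Moufang property (automatic in rank $\ge 3$, and to be checked or bypassed in rank $2$ by positive curvature) pins down $G'$ up to finitely many possibilities. One then matches the diagram of this building against the spherical buildings that arise from polar actions on $\CP^n$, $\HP^n$, $\CaP^2$ and $\mathbb{S}^n$ — equivalently, against the restricted root systems of rank one symmetric spaces and their sections — and shows the only diagrams that can be realized on a compact positively curved manifold of cohomogeneity $\ge 2$ are exactly those. The thin case (building $=$ Coxeter complex, i.e. all isotropy representations are themselves polar of cohomogeneity one on spheres) corresponds to the sum of rank one isotropy representations and is handled directly: such an action is, up to cover, a product-type action on a sphere or projective space.

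Having identified the candidate rank one symmetric space $N$ together with a polar $G$-action on it with the same infinitesimal data, the final step is to upgrade the abstract isomorphism of Tits buildings / chamber systems to an equivariant diffeomorphism $M\to N$. For this I would reconstruct $M$ and $N$ from their common orbit space $M/G\cong N/G$ (a spherical orbifold, namely $\Sigma/W$) by the standard cohomogeneity-$k$ gluing data — the orbit space together with the isotropy groups over each stratum and the normal slice representations — and observe that the building isomorphism furnishes precisely a compatible identification of all this gluing data; an induction over the strata (the cohomogeneity one pieces handled by \cite{GWZ} and the homogeneous slices by Dadok) then assembles the desired equivariant diffeomorphism.

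\medskip

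The main obstacle I expect is the \emph{rank two case}: there the building need not be Moufang a priori, so one cannot immediately quote Tits' classification, and one must instead use positive curvature — through fixed point set dimension estimates, Wilking's connectedness principle, and Synge-type arguments on the totally geodesic fixed point sets of singular isotropy groups — to establish enough homogeneity (Moufang condition, or finiteness of the geometry) to force the generalized quadrangle / hexagon / digon to be one of the classical ones. A secondary technical difficulty is ensuring that the chamber system assembled from the singular isotropy data genuinely satisfies the building axioms (in particular the thinness of apartments and the existence of retractions), rather than merely being a simplicial complex with a group action; controlling this requires a careful analysis of how chambers of $\Sigma$ and their $G$-translates fit together, again using that $\Sigma$ is a positively curved space form.
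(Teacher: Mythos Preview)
Your outline captures the broad architecture --- associate a chamber system, invoke Tits/Burns--Spatzier, reconstruct --- but it misidentifies where the real difficulties lie and contains a substantive error.

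First, the claim that positive curvature ``forces the building to be irreducible unless $M$ splits as a join'' is false. Reducible polar actions on rank one symmetric spaces are ubiquitous (e.g.\ $\T^2$ on $\CP^2$, or any sum of irreducible polar representations on a sphere), and the associated Coxeter diagram then has isolated nodes. The paper devotes two full sections (7 and 8) to these reducible cases precisely because the Burns--Spatzier/\cite{GKMW} machinery requires the diagram to have no isolated nodes; when isolated nodes are present the paper instead proceeds by induction, via a characterization of Hopf fibrations (Lemma~\ref{hopf}) and a Dual Generation lemma. You have no mechanism for this.

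Second, you identify ``rank two / Moufang'' as the main obstacle. But since cohomogeneity $k\ge 2$ gives a Coxeter system of rank $k+1\ge 3$, you are always in rank $\ge 3$, where Moufang is automatic. The genuine obstacle is the type $\CC_3$: Tits' local-approach theorem (that the universal cover of a chamber system is a building once all rank-$3$ residues are covered by buildings) fails exactly for $\CC_3$ residues, and the paper spends Sections~\ref{sectionC3gen}--\ref{sectionC3pos} on a case-by-case verification of Tits' axiom (LL), using reductions to $\CC_2$ via fixed-point sets and the cohomogeneity-one classification \cite{GWZ}. Moreover, one case --- the $\SU(3)\cdot\SU(3)$ action on $\CaP$ --- genuinely fails to be covered by a building (Theorem~E), so your program of ``show the chamber system is a building, then classify'' cannot succeed as stated; this example must be recognized directly.

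Third, two structural steps are missing from your sketch: (i) the chamber system $\mathscr{C}(M,\G)$ is typically \emph{not} a building --- one must pass to its universal cover $\tilde{\mathscr{C}}(M,\G)$, then put a compact topology on it (Section~4) and identify the deck group with a free linear $\S^1$ or $\S^3$ action; (ii) before any of this, one must establish that $\mathscr{C}(M,\G)$ is \emph{gallery-connected}, which the paper obtains from Wilking's dual-foliation theorem \cite{Wi3}, not from his connectedness lemma.
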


This is reminiscent of the situation for
\emph{isoparametric} submanifolds in euclidean spheres, where many
isoparametric hypersurfaces are not homogeneous  (see
\cite{OT,FKM}), whereas in higher codimensions by \cite{Th} they are
the orbits of linear polar actions  if they are irreducible or
equivalently
 the orbits of isotropy representations of compact symmetric spaces by \cite{Da}.

All polar actions on the simply connected,  compact rank one
symmetric spaces, i.e., the spheres and projective spaces, $\Sph^n,
\CP^n, \HP^n$ and $\Bbb{OP}^2$ were classified in \cite{Da} (see also \cite{EH}) and
\cite{PTh, GK}. In all cases but $\Bbb{OP}^2$ they are either linear
polar actions on a sphere or they descend from such actions to a
projective space. By the work mentioned above by Dadok, Cartan,
Tits, and Burns-Spatzier, the (maximal) irreducible polar  linear
actions are in 1-1 correspondence with irreducible \emph{compact
 spherical buildings}. On $\Bbb{OP}^2$ any polar action has
either cohomogeneity one or two, and in the second case all but two
have a fixed point. The latter are actions by $\SU(3)\SU(3)$ \cite{PTh} and $\SO(3)\G_2$ \cite{GK}  both with
orbit space a spherical triangle with angles $\pi/2$, $\pi/3$ and
$\pi/4$. We refer to these as the
\emph{exceptional} (irreducible) actions on $\Bbb{OP}^2$.

We note that Theorem A is optimal: In fact, since the Berger-Cheeger deformation \cite{Ch} preserves polarity and lower curvature bonds there are even invariant positively curved polar metrics on any rank one symmetric space arbitrarily Gromov - Hausdorff close to its orbit space.\\

There are different steps and strategies involved in the proof of Theorem A. To guide the reader we provide a short discussion of the key results needed in the proof.

Our point of departure is the following description of sections and
their (effective) stabilizer groups referred to as  \emph{polar
groups} in \cite{GZ} and generalized Weyl groups in \cite{Sz,PTe}:

\begin{main}
The polar group of a simply connected positively curved polar
manifold of cohomogeneity at least two is a Coxeter group or a $\Bbb
Z_2$ quotient thereof. Moreover, the section with this action is
equivariantly diffeomorphic to a sphere, respectively a real
projective space with a linear action.
\end{main}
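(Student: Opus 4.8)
The plan is to translate the statement into one about the finite group $W$ acting isometrically on the section $\Sigma$, and then to use positive curvature. Since $\Sigma$ is a closed, embedded, totally geodesic submanifold of the compact manifold $M$, the Gauss equation shows that $\Sigma$ is itself a compact positively curved manifold of dimension equal to the cohomogeneity $k\ge 2$, and $W$ is a finite group acting isometrically and effectively on $\Sigma$ with $\Sigma/W=M/G$. For each $p\in\Sigma$ the slice representation of the isotropy group $G_p$ on $\nu_p(G\cdot p)$ is again polar, with section $T_p\Sigma$ and polar group the isotropy subgroup $W_p\le W$, acting linearly on $T_p\Sigma$. By Dadok's theorem \cite{Da} a polar representation has the same orbits as the isotropy representation of a symmetric space, whose generalized Weyl group is the (finite Coxeter) Weyl group of the restricted root system; hence each $W_p$ is a finite reflection group on $T_p\Sigma\cong\R^k$, and via $\exp_p$ the $W_p$-action on a small metric ball around $p$ in $\Sigma$ is equivariantly diffeomorphic to this linear model. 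Thus the $W$-action is locally modeled on a linear reflection group. In particular either $W$ is generated by reflections (isometries of order two whose fixed-point set is a totally geodesic hypersurface), or, after passing to the universal cover $\hat\Sigma$ of $\Sigma$ (compact, since $\pi_1\Sigma$ is finite by Bonnet--Myers), the lifted group $\hat W$ is.

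Next I would analyze the reflection subgroup and a chamber. Let $W_0\trianglelefteq W$ be the subgroup generated by the reflections --- working on $\hat\Sigma$ in place of $\Sigma$ if necessary --- and for a reflection $r$ let $\Sigma^{r}$ denote its mirror, a compact totally geodesic hypersurface. By Frankel's theorem in the positively curved $\Sigma$, any two mirrors intersect (as $2(k-1)\ge k$), and more generally a mirror meets every closed totally geodesic submanifold of positive dimension; this rigidly constrains the mirror arrangement. One then studies a chamber $C$, a connected component of $\Sigma$ minus the union of all mirrors: its closure $\bar C$ is a fundamental domain for $W_0$, and, using positive curvature together with convexity and second-variation arguments, one shows that $\bar C$ is a geodesically convex cell bounded by totally geodesic faces meeting pairwise at dihedral angles of the form $\pi/m$ --- equivalently, that $\Sigma/W_0$ is, after rescaling, a chamber of a finite \emph{spherical} Coxeter complex (here the simple connectivity of $\Sigma/W=M/G$, inherited from $M$ since $G$ is connected, pins down the combinatorics). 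Reflecting the $|W_0|$ copies of $\bar C$ across the mirrors as prescribed by the resulting Coxeter data identifies $\hat\Sigma$ equivariantly with $\Sph^k$ carrying the corresponding linear reflection action, and $\hat W_0$ with that Coxeter group; descending by the deck group, which is trivial or a central $\Z_2$, identifies $(\Sigma,W)$ with $(\Sph^k,W)$ or $(\RP^k,W)$ equipped with linear actions, $W$ being accordingly a Coxeter group or a $\Z_2$-quotient of one. For $k=2$ this is already transparent: a compact positively curved surface is $\Sph^2$ or $\RP^2$, every finite group of its diffeomorphisms is conjugate to a linear one, and the local model above forces a polar group among these to be a reflection group or its $\Z_2$-quotient.

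This is precisely the assertion. The main obstacle is the globalization in the second step: the \emph{local} Coxeter structure of the $W$-action is automatic from Dadok's classification applied to the slice representations, but upgrading it to a \emph{global} one --- in particular excluding exotic positively curved topology for $\Sigma$ and non-linearizable finite group actions on it --- genuinely requires the positive curvature hypothesis, which enters both through Frankel's intersection theorem for the mirrors and through the convexity of the chamber $\bar C$. Since every totally geodesic intersection of mirrors is itself compact and positively curved, this can be organized as an induction on $k$ with the two-dimensional case above as the base.
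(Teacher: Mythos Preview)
Your local analysis is fine: the slice representation at each $p\in\Sigma$ is polar, so $W_p$ is a finite reflection group on $T_p\Sigma$, and the $W$-action is locally linear. The trouble is entirely in your globalization step, and it is a genuine gap rather than a difference of taste.

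\textbf{The main gap.} You write that ``reflecting the $|W_0|$ copies of $\bar C$ across the mirrors \dots\ identifies $\hat\Sigma$ equivariantly with $\Sph^k$''. This is the whole content of the theorem and you have not proved it. Knowing that $\bar C$ is a convex cell with dihedral angles $\pi/m_{ij}$ tells you the \emph{combinatorics} of the reflection group (Davis's theorem, cited in the paper as Theorem~\ref{1-connected}, already gives that $(\hat W_0,S)$ is a finite Coxeter system on any compact simply connected $\hat\Sigma$), but it does not tell you the \emph{topology} of $\hat\Sigma$. A compact simply connected positively curved manifold with a Coxeter reflection action need not, a priori, be a sphere; that is precisely what has to be established. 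Your Frankel argument only shows mirrors intersect, and convexity of $\bar C$ does not by itself force the chambers to tile a round sphere.

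\textbf{A second gap.} Even granting $\hat\Sigma\cong\Sph^k$, your claim that the deck group is ``trivial or a central $\Z_2$'' is unjustified: for odd $k$ there are free linear $\Z_m$-actions on $\Sph^k$ (lens spaces). You need an argument ruling these out.

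\textbf{A third omission.} You never argue that $W=W_0$, i.e.\ that the chamber stabilizer $W_c$ is trivial. Without this you only know $W$ \emph{contains} a Coxeter group. In the paper this comes from the Alexandrino--T\"oben theorem (Theorem~\ref{ata}): on a simply connected polar manifold there are no exceptional orbits, so $W_c$ acts freely on the open chamber; since it also fixes the soul point of the chamber, it is trivial (Proposition~\ref{chamber group}).

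\textbf{How the paper proceeds instead.} The paper avoids your reconstruction-from-chambers entirely. It first shows (Lemma~\ref{sing}) that singular orbits exist, hence at least one reflection $\r$ with mirror $\Lambda$ exists. Then Proposition~\ref{dif-sec} uses only this \emph{single} mirror: strict concavity of the distance to $\Lambda$ in positive curvature and a soul argument force the complement $\Sigma\setminus\Lambda$ to be one or two disks, giving $\Sigma\cong\RP^k$ or $\Sph^k$ directly. This simultaneously handles both of your gaps: the topology of $\Sigma$ is determined before any chamber analysis, and the $\RP^k$ case is recognized by $\r$ having an isolated fixed point (the soul), which forces $\pi_1\Sigma=\Z_2$ rather than something larger. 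Only \emph{after} this does the paper analyze chambers (Lemmas~\ref{w-sphere}, \ref{w-proj}) and the chamber group (Proposition~\ref{chamber group}), and finally lifts a constant-curvature metric from $C=\Sigma/\W$ back to $\Sigma$ to obtain the equivariant linearization (Theorem~\ref{coxeter section}). A minor point: the section is only \emph{immersed} in $M$, not embedded; this does not affect the intrinsic curvature conclusion but your phrasing should be adjusted.
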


If this linear action is irreducible we say that the polar $\G$ action is \emph{irreducible}, and reducible otherwise. The above result also allows us to associate a (connected) \emph{chamber system}  $\mathscr{C}(M;\G)$ (cf. \cite{Ti2, Ro}) of type $\M$ (the Coxeter matrix of the
associated Coxeter group), to any simply connected positively curved  polar $\G$ manifold $M$ of cohomogeneity at least two. We point out that in this
generality, the geometric realization of  $\mathscr{C}(M;\G)$ is not always a simplicial complex, so not a \emph{geometry} of type $\M$ in the sense of Tits. For this we prove:

\begin{main}
Let $M$ be a simply connected positively curved polar $\G$ manifold
without fixed points, and not (equivalent to) an exceptional action
on $\Bbb{OP}^2$. Then the universal cover
$\tilde{\mathscr{C}}(M;\G)$ of $\mathscr{C}(M;\G)$ is a spherical
building.
\end{main}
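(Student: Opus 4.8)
The strategy is to verify the hypotheses of Tits' local recognition criterion for buildings---the ``local approach'', see \cite{Ti2, Ro}: a connected chamber system of spherical type $\M$ is covered by a building, and the covering building is then its universal cover, as soon as every residue of rank $2$ is a thick generalized polygon and every residue of rank $3$ is covered by a building. Since the cohomogeneity is at least two we have $\rank\M\ge 2$, and for $\rank\M\le 3$ the assertion of the theorem is precisely that $\mathscr{C}(M;\G)$ itself is covered by a building. The input that makes this effective is the identification of residues: by the construction of $\mathscr{C}(M;\G)$, the residue of type $J$ is the chamber system of the totally geodesic ``$J$-face'' $\Sigma_J\subseteq M$---again a simply connected, compact, positively curved polar $\G$-manifold without fixed points, with polar group the parabolic subgroup $W_J$---so that the rank-$2$, resp. rank-$3$, residues are the chamber systems of positively curved polar manifolds of cohomogeneity one, resp. two. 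Thus the theorem reduces to: \textbf{(a)} the chamber system of a cohomogeneity-one positively curved polar manifold is a thick generalized polygon; and \textbf{(b)} the chamber system of a cohomogeneity-two positively curved polar manifold, other than the exceptional action on $\CaP$, is covered by a building.

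For \textbf{(a)}: by Theorem~B the orbit space is an arc and the polar group is dihedral of order $2m$, so the chamber system has type $I_2(m)$; simple connectivity of $M$ forces the two singular orbits to have codimension at least two, so their normal spheres are positive-dimensional and the panels are (even infinitely) thick. That the incidence graph has diameter $m$ and girth $2m$---the generalized $m$-gon axioms---is forced by positive curvature: a closed gallery that is too short would produce a geodesic bigon or triangle violating Toponogov's theorem, or a degeneration of the section. In particular $m\in\{2,3,4,6\}$ and the polygon is of Moufang type.

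For \textbf{(b)}, the heart of the matter: let $\mathscr{C}(N;\G)$ be the chamber system of a cohomogeneity-two positively curved polar manifold $N$; it is of rank $3$ and of type a spherical Coxeter matrix $\M_N$, with thick generalized-polygon rank-$2$ residues by \textbf{(a)}. By Tits' classification of thick spherical buildings of rank three, $\M_N$ can only be $A_3$, $C_3$ (equivalently $B_3$), or one of the reducible types $A_1\times I_2(m)$ with $m\in\{2,3,4,6\}$; the type $H_3$ is impossible, since it would require a thick generalized pentagon as a rank-$2$ residue, against \textbf{(a)}. For type $A_3$ and for the reducible types, a connected chamber system of that type whose rank-$2$ residues are thick generalized polygons is automatically covered by a building by the classical local-recognition theorems; the unique type for which this fails is $C_3$. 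So everything comes down to $\M_N=C_3$, i.e. $N/\G$ is the spherical triangle with angles $\pi/2,\pi/3,\pi/4$. Here positive curvature must be used in full force: one determines all such $(N,\G)$---via the isotropy groups and slice representations along the three singular strata together with the rigidity imposed by positive curvature---and shows that the only one whose chamber system is not covered by a building is the exceptional $\SU(3)\SU(3)$-action on $\CaP$, which is excluded; one further checks that it cannot arise as a proper $J$-face of another positively curved polar manifold. Hence every rank-$3$ residue of $\mathscr{C}(M;\G)$ is covered by a building.

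Combining \textbf{(a)} and \textbf{(b)}, all residues of $\mathscr{C}(M;\G)$ of rank at most three are thick generalized polygons, resp. covered by buildings; by Tits' criterion $\mathscr{C}(M;\G)$ is covered by a building $\mathscr{B}$, which---being simply $2$-connected---is the universal cover $\tilde{\mathscr{C}}(M;\G)$; and $\mathscr{B}$ is spherical since $\M$ is. The main obstacle is plainly the $C_3$ case of \textbf{(b)}: the other ingredients are either classical building theory or follow from Theorem~B and standard comparison geometry, whereas classifying the cohomogeneity-two positively curved polar manifolds with a $C_3$ orbit space---and isolating the exceptional action on $\CaP$ as the sole non-building example---is where the deepest use of positive curvature lies.
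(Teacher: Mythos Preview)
Your high-level strategy---invoke Tits' local criterion and isolate the $\CC_3$ case---is exactly what the paper does. The gap is in your identification of residues.

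You write that the $J$-residue ``is the chamber system of the totally geodesic `$J$-face' $\Sigma_J\subseteq M$---again a simply connected, compact, positively curved polar $\G$-manifold.'' There is no such submanifold of $M$. The $J$-residue of a chamber $C$ consists of all chambers sharing the face $C_J=\bigcap_{j\in J}C_j$, and the paper's key lemma (Section~\ref{chambersystem}) shows that as a chamber system it is equivalent to $\mathscr{C}(\Sph^{\perp}_{p,J},\G_p)$ for $p\in C_J$: the chamber system of the \emph{slice representation} of the isotropy group $\G_p$ on the unit normal sphere to the stratum. This is a polar \emph{representation} on a round sphere, and such chamber systems are already known to be spherical buildings (Remark~\ref{polar rep}). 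Hence every proper residue of $\mathscr{C}(M;\G)$ is a building outright, not merely covered by one.

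This matters because it collapses most of your outline. Your claim \textbf{(a)}---that every cohomogeneity-one simply connected positively curved polar manifold has a generalized-polygon chamber system---is both unnecessary and false as stated: the exotic cohomogeneity-one examples of \cite{GWZ} (Eschenburg spaces, etc.) are not linear actions on spheres and their chamber systems are not generalized polygons. Your Toponogov sketch (``a closed gallery that is too short would produce a geodesic bigon'') conflates the combinatorial gallery metric with the Riemannian one and does not work. Likewise, for rank $\geq 4$ you do not need your \textbf{(b)} at all: the rank-$3$ residues are slice representations, hence buildings, so Tits' theorem (Theorem~\ref{tits covering}) applies immediately. Your check that the exceptional $\CaP$ action ``cannot arise as a proper $J$-face'' is then automatic---it is not a polar representation.

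What survives is precisely the case where $M$ itself has rank $3$ and Coxeter type $\CC_3$; for $\A_3$ and the reducible rank-$3$ types Tits' theorem already gives the building cover once rank-$2$ residues are generalized polygons. Your description of the $\CC_3$ analysis is accurate in spirit and matches Sections~\ref{sectionC3gen}--\ref{sectionC3pos}: one determines all possible isotropy data via reductions to $\CC_2$ subgeometries recognized from \cite{GWZ}, verifying Tits' axiom (LL) in each case except the one with the same local data as $\SU(3)\cdot\SU(3)$ on $\CaP$.
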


Moreover, the Hausdorff topology on compact subsets of $M$ induces in a natural way a topology on $\tilde{\mathscr{C}}(M;\G)$ for which we prove:

\begin{main}
Whenever the universal cover $\tilde{\mathscr{C}}(M;\G)$  of
$\mathscr{C}(M;\G)$ is a building, it is a compact spherical
building.
\end{main}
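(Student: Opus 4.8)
The plan is to equip the chamber set of $\tilde{\mathscr{C}}(M;\G)$ with a topology and to verify for it the axioms of a compact topological spherical building in the sense of Burns and Spatzier \cite{BSp}. Since $\tilde{\mathscr{C}}(M;\G)$ is assumed to be a building, and it is of spherical type by construction, what must be produced is: (i) a compact metrizable topology on the chamber set; (ii) closedness of each $i$-adjacency relation (equivalently, the $i$-panels form a compact, continuously varying family); and (iii) continuity of the combinatorial projection maps $\mathrm{proj}_R$ onto residues $R$ — equivalently, for a pair of opposite chambers the retraction onto an apartment through one of them is continuous, opposition being an open condition. The topology itself is essentially forced: by the construction of $\mathscr{C}(M;\G)$, its chambers, its residues, and the objects of the underlying incidence geometry are all realized canonically by finite configurations of $\G$-orbits in $M$, i.e.\ by tuples of compact subsets of the compact manifold $M$; transporting the Hausdorff metric on the space $2^M$ of compact subsets of $M$ through these realizations yields a metric on the chamber set of $\mathscr{C}(M;\G)$, and, since everything here is phrased through Hausdorff convergence in $M$, on that of $\tilde{\mathscr{C}}(M;\G)$ as well.

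First I would settle compactness. For $\mathscr{C}(M;\G)$ the chamber set is identified with a subset of a fixed finite product of copies of the hyperspace $(2^{M},d_{\mathrm{Haus}})$, which is compact and metrizable; it remains to see that this subset is closed, i.e.\ that the defining conditions — each component being a $\G$-orbit of the prescribed orbit type, and the prescribed incidences holding among the components — survive Hausdorff limits. Here one uses the structural picture supplied by Theorem B: the orbit space is a spherical simplex, the slice representations are the expected ones, and consequently a Hausdorff-convergent sequence of orbits of a fixed type converges to a single orbit while a convergent sequence of flags of orbits converges to a flag of orbits; the incidence relations, being expressed through containment of limit orbits and the geometry of minimal normal geodesics, are likewise closed. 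Thus the chamber set of $\mathscr{C}(M;\G)$ is compact metrizable, and the same holds for $\tilde{\mathscr{C}}(M;\G)$: either one argues directly that the configurations realizing its chambers again form a Hausdorff-closed family, or one notes that the covering $\tilde{\mathscr{C}}(M;\G)\to\mathscr{C}(M;\G)$ is finite — which one reads off from Theorem B, the polar group being a Coxeter group or a $\mathbb{Z}_2$-quotient of one, so that at worst a double cover is involved — and invokes compactness downstairs.

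Next I would check the continuity statements. That each $i$-adjacency relation is closed is immediate from the description above: two chambers are $i$-adjacent precisely when the configurations realizing them agree in every component but the $i$-th, and this is preserved under joint Hausdorff limits; equivalently the $i$-panels are compact and vary continuously. The substantive point is the continuity of the projections. The combinatorial projection $\mathrm{proj}_R C$ — and, as its special case, opposition and the apartment retractions — must be translated into Riemannian terms: $R$ and $C$ correspond to orbit configurations in $M$, and $\mathrm{proj}_R C$ is computed from the minimal $\G$-geodesics realizing the distances between these orbits, together with the normal exponential map and the section through a minimizing segment. These minimizing segments, and hence the projection, depend continuously on $C$, and this is where positive curvature enters, through Toponogov's comparison theorem, which delivers the required uniqueness and stability of minimizers between orbits (and, where a genuine section must be selected, continuity of that choice). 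Granting this, all the axioms of \cite{BSp} are in place, and with the hypothesis that $\tilde{\mathscr{C}}(M;\G)$ is a building we conclude that it is a compact spherical building.

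The main obstacle is step (iii): reconciling the purely combinatorial projection operations in the building with the Riemannian geometry of $M$ and extracting the needed continuity from the curvature bound. A secondary delicate point — and the place where the hypotheses of Theorem B are genuinely used — is the compactness in step (i), namely that the families of orbit configurations realizing objects, residues, and chambers are closed under Hausdorff convergence rather than merely locally so; controlling how orbit types collapse along the boundary strata of the spherical simplex $M/\G$ is what makes this work.
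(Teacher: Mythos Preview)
Your proposal has a genuine and fatal gap in step (i). You claim that the covering $\tilde{\mathscr{C}}(M;\G)\to\mathscr{C}(M;\G)$ is finite, ``at worst a double cover,'' citing Theorem B. This conflates two entirely different coverings: Theorem B concerns the section $\Sigma$ and its polar group $\W$, where indeed at worst a $\Z_2$ is involved. But the covering of chamber systems $\tilde{\mathscr{C}}(M;\G)\to\mathscr{C}(M;\G)$ has deck group $\pi$, which --- as the paper establishes later (Theorem \ref{comp group} and the proof of Theorem \ref{mainresult}) --- is a compact Lie group that may be $\S^1$ or $\S^3$, not finite. Indeed the whole point of the construction is that $M=\Sph^n/\pi$ with $\pi$ acting freely and linearly on a sphere; for $M=\CP^n$ or $\HP^n$ the cover has positive-dimensional fibers. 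So compactness of $\tilde{\mathscr{C}}$ does \emph{not} follow from compactness of $\mathscr{C}$ by a finiteness argument.

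The paper's route is quite different and hinges on an observation you missed: since $\tilde{\mathscr{C}}$ is assumed to be a \emph{spherical} building, any two of its chambers are joined by a gallery of length at most $\tfrac12|\W(\M)|$. One therefore topologizes $\tilde{\mathscr{C}}$ directly: fix a base chamber $\tilde C_0$, and declare $\tilde C'$ close to $\tilde C$ when there exist bounded-length galleries from $\tilde C_0$ to each whose $p$-images are Hausdorff-close in $M$. Compactness then comes from compactness of the space of bounded-length galleries in $M$ together with a Homotopy Control Lemma (galleries with the same extremities in a building of rank $\ge 3$ are homotopic through a number of chambers bounded by their lengths), which is what guarantees that Hausdorff limits of projected galleries lift consistently. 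Your step (iii) is also off target: the Burns--Spatzier axioms, as stated in the paper, require only a Hausdorff topology on the vertex set in which each simplex type is closed in the appropriate product --- no continuity of combinatorial projections, no Toponogov. The paper verifies precisely this via openness of the vertex-type projections $\pi_i$ and closedness of adjacency, both of which follow from the gallery-based topology and the control lemma.
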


When the Coxeter diagram for $\M$ is connected, or more generally
has no isolated nodes, the work of Burns and Spatzier \cite{BSp} as
extended by Grundh\"{o}fer, Kramer, Van Maldeghem and Weiss
\cite{GKMW} applies, and hence $\tilde{\mathscr{C}}(M;\G)$  is
the building of the sphere at infinity of a noncompact symmetric
space $\U/\K$ of nonpositive curvature, and the action of $\K$ on
the sphere at infinity is the linear polar action whose chamber
system is the building. In our case, the fundamental  group $\pi$ of
the cover becomes a compact normal subgroup of $\tilde \G \subset
\K$ acting freely on the sphere with quotient our manifold with the
action by $\G = \tilde \G /\pi$. Moreover, the actions by $\tilde \G
$ and $\K$ on the sphere are orbit equivalent. This already proves
our Theorem \ref{A} up to equivariant homeomorphism in this case (Theorem \ref{mainresult}),
and equivariant diffeomorphism follows, e.g., from the \emph{recognition
theorem} in \cite{GZ}. In particular, we note that in this case $M$ is either a sphere or a quotient thereof by a Hopf action, i.e., not the Cayley plane.


In the remaining (reducible) cases (including the case of fixed
points), where isolated nodes of the Coxeter diagram are present,
the above mentioned extended Burns-Spatzier-Tits theory  does not
yield the desired result, and we also use more direct geometric
arguments that hinges on a \emph{characterization of Hopf fibrations} in our
context, Lemma \ref{hopf}.


We point out that the proof of Theorem C  above has  three distinct
parts, a special one of which is carried out in \cite{FGT}. For all chamber systems of rank at least four, our
constructions combined with the work of Tits gives the result (cf. Theorem \ref{building}). In the reducible rank three cases it follows from Theorem \ref{nonexcep}. In
the irreducible rank three cases, i.e., of type $\A_3$ and $\CC_3$, corresponding to the orbit space
being a spherical triangle with angles $\pi/2$, $\pi/3$ and $\pi/3$, respectively $\pi/2$, $\pi/3$ and $\pi/4$,
the general theory breaks down. The point of departure here (Theorem \ref{simplicial}), is that in
this case $\mathscr{C}(M;\G)$ is \emph{simplicial}, thus an $\A_3$, respectively a $\CC_3$
\emph{geometry}. Since all $\A_n$ geometries are buildings by work of Tits, this completes the case of $\A_3$. In the case of $\CC_3$ one can use an axiomatic characterization of
buildings of type $\CC_3$ due to Tits. This is carried out in \cite{FGT} via reductions and the work in \cite{GWZ}. Unlike the higher rank case,  the strategy here is to \emph{construct suitable covers} and prove that they are buildings. Exactly two cases emerge (from the exceptional actions on $\Bbb{OP}^2$) where this cannot be done due to theorem D and the subsequent discussion above. In fact,  we
conclude:

\begin{main}
The chamber system $\mathscr{C}(\Bbb{OP}^2, \G)$  for an irreducible polar $\G$ action
on $\Bbb{OP}^2$ is a $\CC_3$ geometry whose universal cover is not a building.
\end{main}

 The existence of  $\CC_3$ geometries whose universal covers are not buildings are well known in the
``real estate community" (see \cite{Ne}), but the
examples $\mathscr{C}(\Bbb{OP}^2, \G)$, with $\G = \SU(3)\cdot \SU(3)$, and $\G = \SO(3) \cdot \G_2$ which arise naturally in our context are new.

\medskip
We conclude this outline by pointing out that positive curvature is used in the general theory for two purposes:
(1) To prove Theorem B (cf. Section 2) , and (2) to establish that the associated chamber system $\mathscr{C}(M;\G)$ is connected, or equivalently the $\G$ action is \emph{primitive} (cf. Section 3). In fact, we prove the conclusion of Theorem A for any polar action with connected chamber system and whose orbit space has positive curvature, unless it is of type  $\CC_3$. In the case of $\CC_3$ (see \cite{FGT}) we use positive curvature more extensively, in particularly relying on the work in \cite{GWZ} (alternatively, due to Theorem \ref{simplicial}, this case is also covered by the classification of  irreducible homogeneous geometries of finite Coxeter type $\M$ of rank at least two in \cite{KL}).

\smallskip
Unlike previous applications of buildings to geometry, what is essential for us is to use Tits's \emph{local approach to
 buildings}, i.e., via chamber systems and their universal covers \cite{Ti2, Ro}.  We like to mention that this is the case also in independent simultaneous work by Lytchak \cite{Ly} describing the structure of polar singular foliations of codimension at least
three in compact symmetric spaces. In particular, in his context results similar to Theorem D and its corollary, Theorem E were obtained.

\bigskip

We point out that a corresponding theory for polar actions on
\emph{nonnegatively curved manifolds} is significantly more involved. In particular, the concept of an ``irreducible action" is not as
straightforward in this case, in part since the section is no longer just a
sphere or a real projective space with a reflection group (for a complete description see \cite{FG}). For example the polar $\T^2$ action on $\CP^2$ with three
fixed points induces a polar $\T^2$ action on $\CP^2 \# \pm \CP^2$
with a metric of nonnegative curvature and flat Klein bottle as
section (see \cite{GZ}) that should be viewed as reducible, as should actions that are not primitive.

With the
appropriate notion of irreducibility we

\begin{conjecture}
An irreducible polar action on a simply connected nonnegatively
curved compact manifold is equivariantly diffeomorphic to a quotient of a polar
action on a symmetric space.
\end{conjecture}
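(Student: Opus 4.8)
We outline a possible line of attack, following the architecture of the proof of \tref{A} and isolating where genuinely new phenomena force new input. The first and most delicate task is to pin down the right notion of \emph{irreducibility}. Since the section $\Sigma$ of a polar $\G$ action on a nonnegatively curved $M$ is totally geodesic, it is itself nonnegatively curved, and the polar group $W$ acts on it properly; by soul theory the universal cover of $\Sigma$ splits isometrically as $\R^k \times C$ with $C$ compact. The expectation is that the action should be called reducible precisely when the orbit space $X = M/\G = \Sigma/W$ splits nontrivially as a metric product \emph{compatibly with the stratification by orbit types}, or, more subtly, when a proper normal subgroup of $\G$ acts orbit equivalently to a polar action on a factor --- this is exactly what degenerates the polar $\T^2$ action on $\CP^2$ to the polar $\T^2$ action on $\CP^2 \# \pm\CP^2$ with flat Klein bottle as section. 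With such a definition one reduces, as in \cite{GZ} and in the reducible part of the present paper, to the irreducible case; the definition must in addition be arranged so that cohomogeneity one actions count as reducible, since nonnegatively curved cohomogeneity one manifolds are far too plentiful to admit any such classification.

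The second step is the analogue of Theorem~B: determine which pairs $(\Sigma, W)$ occur. One expects exactly the following models. Either $\Sigma$ is a round sphere with a finite (spherical) Coxeter reflection group, or its $\Z_2$ quotient $\RP^n$, as in positive curvature; or --- and this is new --- $\Sigma$ is a flat torus $\T^r$ (or a $\Z_2$ quotient of one) acted on by an \emph{affine} Weyl group, the model being a maximal flat of a compact symmetric space. Irreducibility should rule out all other nonnegatively curved sections. Proving this amounts to a rigidity statement for the orbit space: $X$ is a compact nonnegatively curved Alexandrov space and a good orbifold, and one must show that, after passing to its orbifold universal cover, it is a spherical simplex, a Euclidean simplex, or a metric product --- the nonnegatively curved counterpart of the rank one/Coxeter dichotomy behind Theorem~B. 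From $(\Sigma,W)$ one then builds, exactly as before, a chamber system $\mathscr{C}(M;\G)$ of Coxeter type $\M$, now possibly affine, and attempts to show its universal cover is a building: a spherical building when $\M$ is spherical, a Euclidean (affine) building when $\M$ is affine.

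In the spherical case, once the Hausdorff topology is in place the recognition machinery of Burns--Spatzier \cite{BSp} and \cite{GKMW} applies verbatim, and one concludes, as in the present paper, that $M$ is equivariantly a polar action on a rank one symmetric space or a product, together with the induction over isolated nodes of the Coxeter diagram via the characterization of Hopf fibrations. The entirely new content, and the place where I expect the main obstacle, is the \emph{affine} case. What is missing is a topological, ``compact'' version of Bruhat--Tits theory: a compact Euclidean building of rank at least $3$ satisfying a suitable transitivity property ought to be recognized as the one associated to a symmetric space or to a $p$-adic group, with the $W$-action on its model apartment the linear polar model --- but no such recognition theorem exists, and in the low rank affine cases, where $\Sigma$ is a flat $2$-torus and $\M$ is of type $\tilde A_1 \times \tilde A_1$, $\tilde A_2$, $\tilde C_2$, or $\tilde G_2$, Euclidean buildings are hopelessly unclassified, so a purely combinatorial argument is out of the question. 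Here one must return to curvature. The crucial point is that Frankel's theorem fails in nonnegative curvature --- and its failure is precisely what produces the reducible connect-sum examples --- so one has to substitute the softer tools that survive: Wilking's connectedness principle and its consequences for totally geodesic submanifolds and fixed point sets \cite{Wi3}, soul theory applied to the normal bundles of the singular orbits, and the Cheeger--Gromoll splitting of the universal cover of $\Sigma$. In sum, I expect the spherical part of the conjecture to follow from the methods of this paper with essentially bookkeeping changes, while the affine part requires either a genuinely new compact Euclidean building recognition theorem or a substantially more geometric, curvature-driven argument; that is the crux of the problem.
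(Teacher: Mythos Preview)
The statement you were given is a \emph{conjecture} in the paper, not a theorem: the authors pose it as an open problem immediately after Theorem~E and do not prove it or claim to. There is therefore no proof in the paper to compare your proposal against.

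Your write-up is, as you yourself say, an outline of a possible line of attack, not a proof. Viewed as a research program it is sensible and correctly isolates the two principal obstacles the paper itself flags: first, that the notion of irreducibility is genuinely ambiguous in nonnegative curvature (the $\T^2$ action on $\CP^2\#\pm\CP^2$ with flat Klein bottle section is precisely the cautionary example the authors give in the paragraph introducing the conjecture), and second, that once flat sections appear the associated chamber system will be of affine type, for which no analogue of the Burns--Spatzier/\cite{GKMW} recognition theorem is available. Your observation that cohomogeneity one must somehow be excluded is also well taken --- the paper's own Theorem~A carries the hypothesis ``cohomogeneity at least two,'' and the conjecture presumably inherits it or absorbs it into whatever definition of irreducibility is eventually adopted, since otherwise the statement is already false.

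But to be clear about what is on the page: neither the paper nor your proposal contains a proof of this statement. The affine case is not merely ``where the main obstacle is expected''; it is a genuine open problem, and your outline does not close it --- you say as much. The honest assessment is that this remains a conjecture, and what you have produced is a reasonable sketch of where the difficulties lie, not a proof.
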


\smallskip

Another interesting direction is based on part of
our work here that generalizes to curvature free settings. For
example, the combinatorial content of our paper can be used in the study of general polar actions on simply connected manifolds with finite polar group.

\bigskip

We have divided the paper into seven sections. Structurally it consists of three rather different moderately intertwined  parts, Sections 1-4 constituting Part I, Section \ref{sectionC3gen} (together with \cite{FGT}) Part II, and Sections \ref{input}-\ref{genRed}  Part III. Here Part I deals with the overall general approach and theory leading to a proof of Theorem A for all irreducible actions of cohomogeneity at least three. Part II deals with the exceptional case of irreducibe actions of cohomogeneity two where the key issue for the general theory breaks down. Finally Part III deals with all reducible cases including cohomogeneity two. In particular, Parts I and III yield a proof of Theorem A in cohomogeneity at least three. In cohomogeneity two only the irreducible actions of type $\CC3$ are not covered in this paper and we refer to \cite{FGT} (or \cite{KL}).

The first two sections are devoted to preliminaries and an analysis of sections culminating in Theorem B, which actually provides a complete classification of positively curved manifolds with reflection groups.
The chamber system associated with a polar action in positive curvature is investigated in Section 3. The point of departure here is  that this chamber system is connected.
The proof of this is based on a
result about dual foliations due to Wilking \cite{Wi3}.
  We conclude Section 3 by proving Theorem C in all
(irreducible) cases but $\A_3$ and $\CC_3$.

In Section 4 we equip the ingredients of Theorem C with a natural topology based on the classical Hausdorff topology on closed sets in a compact metric space.
 Our main result here is
that with this topology the universal covers of our chamber systems
are compact spherical buildings in the sense of Burns and Spatzier.
This then in particular leads to a proof of Theorem A for
 all irreducible actions but those of type $\A_3$ and $\CC_3$.

As mentioned above, the general theory for compact spherical buildings breaks down for
reducible actions in general (the ones for whom the Coxeter diagram
has isolated nodes). The proof of Theorem A for such actions is
carried out in Sections \ref{input}  and \ref{genRed}. As a key input, we provide in Section \ref{input}
 a characterization of Hopf fibrations in our context which is of
independent  interest.  This immediately yields Theorem A for the
special case where fixed points are present. The reducible case
where no fixed points are present is dealt with in Section \ref{genRed}.

For basic facts and tools involving critical point theory for nonsmooth distance functions and convex sets in positive curvature that will be used freely we refer to \cite{Pe} Chapter 11.

It is our pleasure  to thank Linus Kramer, and Alexander Lytchak for constructive discussions and comments. Likewise, we are grateful to an anonymous referee for constructive comments and a suggestion that lead to a significant simplification of the proof of the Hopf Lemma \ref{hopf}, and subsequent ramifications.

\section{Preliminaries}    \label{sectionone}
We will begin by giving a brief description of known facts for
general polar manifolds (cf. e.g. \cite{GZ} and \cite{HPTT} for
further information). We observe that under
fairly mild restrictions, there is a general so-called chamber system naturally
associated with such actions. We will end the section with a
description of such systems, and the special case  of Coxeter
systems.

\bigskip

Throughout $\G$ will be a compact connected Lie group acting
isometrically on a  connected compact Riemannian manifold $M$  in a
polar fashion. By definition there is a  \emph{section} $\Sigma$, i.e., an
immersion $\sigma: \Sigma \to M$ of a connected manifold $\Sigma$,
whose image intersects all $\G$ orbits orthogonally. Moreover, we
demand that $\sigma$ is a section without a subcover section, i.e. $\sigma$ does not factor through a covering $\Sigma \to \Sigma '  \to M$.
Obviously $\g\sigma$ is a section for any $\g \in \G$, and
$\G\sigma(\Sigma) = M$. Clearly, $\Sigma$ has the same dimension as
the \emph{orbit space}  $M^* := M/\G$, i.e., the
\emph{cohomogeneity} of the action, or the codimension of
\emph{principal orbits}, $\G/\H \subset M$. If not otherwise stated,
it is understood that $0 < \dim M^* < \dim M$. This eliminates
general actions by discrete groups, and general transitive actions.
In addition, we also assume that $M$ is not a product where $\G$
acts trivially on one of the factors. In general, we will denote the
image of a subset $X \subset M$ under the \emph{orbit map} by $X^*
\subset M^*$.

\bigskip

The following facts are simple and well known (cf. \cite{Sz,PTe}):

\begin{itemize}

\item
Any section is \emph{totally geodesic}.

\item
The slice representation of any isotropy group $\K \subset \G$ is a
\emph{polar representation}.
\end{itemize}

\no Recall here that if $\K$ fixes $p \in M$, then the \emph{slice representation} of $\K$ is the action of $\K$ by differentials on the normal space in $T_pM$ to the tangent space $T_p(\G p)$ of its orbit $\G p$. We often restrict this further to the subspace $T_p^{\perp}$ perpendicular also to the fixed point set $T_pM^{\K}$. This is also a polar representation.

 Fix a section $\sigma: \Sigma \to M$ and a point $p \in \Sigma$
corresponding to a principal $\G$ orbit, i.e., $\G \sigma(p)$ is a
principal orbit with isotropy group $\H = \G_{\sigma(p)}$. The
stabilizer subgroup $\G_{\sigma(\Sigma)}  \subset \G$ of
$\sigma(\Sigma)$ induces an action on $\Sigma$. Clearly, $\H$ is the
kernel of  that action, and we refer to $\Pi: = \G_{\sigma(\Sigma)}/
\H$ as the \emph{polar group} associated to the section $\sigma$.
Recall the following facts:

\smallskip

\begin{itemize}

\item
For any $q \in \Sigma$, $\sigma_{*} (T_q \Sigma) \subset
T_{\sigma(q)}M$ is a section of the polar representation, the slice
representation of $\G_{\sigma(q)}$, and the associated polar group
is the isotropy group $\Pi_q$.

\item
 $\Pi$  is a discrete subgroup of $\N(\H)/\H$ acting properly discontinuously on $\Sigma$  with trivial principal isotropy group.

\item
$M^* = \Sigma^* := \Sigma/\Pi$ is an orbifold.
\end{itemize}

\medskip

In complete generality, the structure of $M$ and its $\G$ action is
encoded in the section $\Sigma$, the polar group $\Pi$ and its
actions on $\Sigma$ and $\G/\H$, and the $\G$ isotropy groups along
$\Sigma$. Although in general, $\Pi$ can be any
discrete subgroup of a Lie group, typically singular orbits are present, in which case there is
a
nontrivial normal subgroup
 $\W \subset \Pi$ generated by \emph{reflections} $\r_i$ associated with maximal singular isotropy groups
 $\K_i  \subset \G$ along $\Sigma$  (cf. \cite{GZ}). We refer to any group generated by reflections as a \emph{reflection group}. We stress that here $\r: \Sigma \to \Sigma$ is called a reflection if $\r$ has order two, and at least
  one component of the fixed point set has codimension 1. The codimension 1 components $\Lambda_{\r} \subset \Sigma$ of the fixed point set $\Sigma^{\r}$ are referred to as the
   \emph{mirrors} of $\r$. A connected component $c$ of the complement of all mirrors is called
   an (\emph{open}) \emph{chamber} of $\Sigma$. We denote the closure of an open chamber by $C = \bar c$ and refer to it simply as a \emph{chamber}. Again, we
   stress that this kind of terminology is usually reserved to the situation where the complement of a mirror has two connected components interchanged by the reflection. Note that the latter is automatic for reflections on a simply connected manifold.

It is clear that $\W$ acts transitively on the set of open chambers
of $\Sigma$, but the stabilizer group $\W_c$ which we will call the
\emph{chamber group} may be nontrivial  when the section is not
simply connected (cf. Example \ref{trivial examples}
and Theorem \ref{1-connected}). Clearly $\Sigma/\W = C/ \W_c$. Moreover, the boundary $\partial C = C - c$ of a chamber $C$, is
the union of its  \emph{chamber faces}, where a chamber face is a
nonempty intersection $C \cap \Lambda$ with a mirror.

The following examples illustrate these concepts and are relevant
for our subsequent discussion about positive curvature:

\begin{example}\label{trivial examples}
Consider the following groups $\W$ acting on $\Sph^2$ as well as on
$\RP^2$.

\no (1) $\W = \A_1 = \langle \r\rangle $, where $\r$ is the
reflection in the equator:

On $\Sph^2$ there is one mirror and two open chambers, the open
upper and lower hemispheres interchanged by $\r$. Their closure is
the orbit space $\Sph^2/\W$. There is one face, its boundary circle
(and it coincides with the mirror of $\r$).

On $\RP^2$ there is one mirror and one open chamber and it is
preserved by $\r$. Its closure is all of $\RP^2$ and the orbit space
$\RP^2/\W$ is the cone on its boundary circle, the cone point
corresponds to the isolated fixed point of $\r$ on $\RP^2$ (in the
chamber). There is one face, the whole boundary (and it coincides
with the mirror of $\r$).

Note that the action of $\W$ on $\RP^2$ lifts to the action of $\W$
on $\Sph^2$. If we extend this action by $-{\rm id}$, the extended
group action induces the same action on the base, and now has the
same orbit space.

\no (2) $\W = \A_1 \times \A_1 = \langle \r_0, \r_2\rangle$ where
$\r_0, \r_2$ are reflection in two great circles making an angle
$\pi/2$:

On $\Sph^2$ there are two mirrors and four open chambers. Their
closure is the orbit space $\Sph^2/\W$, a spherical right angled
biangle. There are two faces each of which are also a chamber face.
Their intersection is the intersection of mirrors and coincides with
the fixed point set $\Fix(\W)$.

On $\RP^2$ there are three mirrors and four open chambers. In fact,
the ``rotation" $\r_0\r_2$ on $\Sph^2$ induces a reflection on
$\RP^2$. The closure of an open chamber is the orbit space
$\RP^2/\W$, a right angled spherical triangle. There are three
faces, each of which is also a chamber face. The intersection of all
mirrors is empty, but each vertex of the orbit space triangle
correspond in this case to a fixed point of $\W$.

In this case, the lifted action of $\W$ on $\RP^2$ to $\Sph^2$
contains a rotation of angle $\pi$. Again the extended action by
$-{\rm id}$ defines the same action on $\RP^2$, but on $\Sph^2$ the
action has three reflections, and of course the same orbit space. In
other words, the reflection group on $\Sph^2$ generated by the lift
of all the reflections in $\RP^2$ contains the antipodal map in this
case as opposed to the first case.

\no (3) $\W = \A_2 = \langle\r_0, \r_3\rangle$ where $\r_0, \r_3$
are reflection in two great circles making an angle $\pi/3$:

On $\Sph^2$ there are three mirrors and six open chambers. Their
closure is the orbit space $\Sph^2/\W$, a spherical biangle with
angle $\pi/3$. There are two faces each of which are also a chamber
face. Their intersection is the intersection of mirrors and
coincides with the fixed point set $\Fix(\W)$.

On $\RP^2$ there are three mirrors and three open chambers. The
closure of an open chamber is a spherical biangle with angle $\pi/3$
where the two vertices have been identified! The stabilizer $\W_c$
of a chamber has order two, fixes the ``mid point" of $C$ and
rotates $C$ to itself, mapping one chamber face to the other. The
orbit space has one face with one singular point, the fixed point of
$\W$ and one interior singular point, the fixed point of $\W_c$.

In this case, the reflection group obtained by lifting the
reflections in $\RP^2$ to reflections in $\Sph^2$ does not contain
the antipodal map. If we extend it by the antipodal map we get the
same action on $\RP^2$ and the orbit spaces are of course the same
as well.

\no (4) Consider a linear cohomogeneity one
action on a sphere with orbit space of length $\pi/i$, $i=2,3$.
The suspended action and its induced action on the real projective
space have sections and polar groups as presented in (2) and (3) above.

\end{example}

\smallskip

Note that \emph{if $\Pi = \W$ and the chamber group $\W_c$ is trivial}, it follows
that $C$ is isometrically identified with $\Sigma/\W = M/\G$, and that $\W$ acts simply transitively on the
set of closed chambers of a fixed section $\Sigma$.  Moreover, $\G$
acts transitively on the set of all chambers in all sections of $M$,
i.e. $M =  \cup_{\g \in \G} gC$, and this set of chambers is $\G/\H$
as a set. The chamber faces $F_i, i = 1, \ldots k$, of $C$
correspond to a set of generators $r_i$ for $\W$. This way all faces
of chambers $gC$, $g\in \G$ of $M$ get labeled consistently,
so that $\G$ is label preserving. Now define two chambers $g_1C$ and $g_2C$ to be $i$-\emph{adjacent} if they have a
common $i$ face $g_1F_i = g_2F_i$.  This relation among the set of chambers  in $M$, respectively all chambers in a fixed
section $\Sigma$  make both of these
sets into a \emph{chamber system} $\mathscr{C}(M, \G)$,
respectively $\mathscr{C}(\Sigma, \W)$ according to the following
definition (see, e.g., \cite{Ti2,Ro}):
\smallskip

 An \emph{(abstract) chamber system over $I =\{1,\dots,k\}$} is a set $\mathscr{C}$ together with a partition
 of $\mathscr{C}$ for every $i\in I$. Elements $C, C' \in \mathscr{C}$ in the same part of the $i$-partition, are said to be $i$-\emph{adjacent}
 which is written as $C\sim_i C'$. The elements of $\mathscr{C}$ are called \emph{chambers}.

We will use the following standard terminology in subsequent
sections:

A \emph{gallery}  in $\mathscr{C}$ is a sequence $\Gamma =
(C_0,\dots,C_m)$ in $\mathscr{C}$ such that
 $C_j$ is $i_j$-adjacent to $C_{j+1}$ for every $0\le j\le m-1$. Here the \emph{word} $f = i_0i_2\dots i_{m-1}$ in $I$ is referred to as the \emph{type} of the gallery.
 If we want to indicate this type, we write $\Gamma_f$ rather than just $\Gamma$.  If the $i_j$'s belong to a subset
 $J$ of $I$, we call $\Gamma = (C_0,\dots,C_m)$ a $J$-gallery. A subset $\mathcal{B}$ of a  chamber system $\mathscr{C}$ is said to be
 \emph{connected} (or $J$-\emph{connected}) if any two chambers in it can be connected by
 a gallery (or a $J$-gallery). The $J$-connected components of $\mathscr{C}$ are called $J$-\emph{residues}.
 The \emph{rank} of a $J$-residue is the cardinality of $J$ and its \emph{corank} is the cardinality of
 $I\setminus J$.  Given residues $R$ and $S$ of types $J$ and $K$ respectively, we say that
 $S$ is a \emph{face} of $R$, if $R\subset S$ and $J\subset K$.

\bigskip

Note that for chamber systems  $\mathscr{C}(M, \G)$ as above, if two
mirrors $\Lambda_i$ and $\Lambda_j$ in $\Sigma$ corresponding to two
reflections $\r_i$ and $\r_j$ on $\Sigma$ intersect, then $(\r_i
\r_j)^{m_{ij}} =1$ for some finite integer $m_{ij} > 1$. In fact,
$\r_i, \r_j \in \W_p$ the reflection group of the polar
representation of the isotropy group $\G_p$ for $p \in  \Lambda_i
\cap\Lambda_j =: \Lambda_{ij}$, so  $ \langle\r_i,\r_j \rangle$ is a
dihedral group, and the angle between $\Lambda_i$ and $\Lambda_j$ is
$\pi/m_{ij}$. In fact, in our case $m_{ij}$ is limited to $2$, $3$, $4$, or $6$ , since these are the possibilities for isotropy representations of symmetric spaces, and moreover no exceptional orbits are present (Theorem \ref{ata}).

\medskip

Recall, that a symmetric $k\times k$ matrix $\M=(m_{ij})$ with
entries from $\mathbb{N}\cup \{\infty\}$, with $m_{ii}=1$ for all
$i\in I$, and $m_{ij}>1$ if $i\ne j$ is called a \emph{Coxeter
matrix}.

Pictorially, $\M$ is given by its so-called \emph{diagram}, which
consists of one \emph{node} for each $i \in I$ and $m_{ij} - 2$
lines joining the $i$ and $j$ nodes.

The associated \emph{Coxeter group of type $\M$} is the group
$\W(\M)$ given by generators and relations as
$$
 \W(\M)=\langle\{r_1,\dots,r_k\}\mid (r_ir_j)^{m_{ij}}=1 \hbox{ for all } i,j\in I \hbox{ such that $m_{ij}$ is finite}\rangle.
 $$
 The pair $(\W(\M),S)$, where $S=\{r_1,\dots,r_k\}$, is called the \emph{Coxeter system of type $\M$}, and $k$ is referred to as its \emph{rank}. The elements of $\W(\M)$ that are conjugate to elements
 in $S$ are called \emph{reflections}.

There is a natural chamber system, $\mathscr{C}(\W)$ associated with
a Coxeter system $(\W(\M),S)$, where $S=\{r_1,\dots,r_k\}$ and
$I=\{1,\dots,k\}$: One defines $i$-adjacency for $i\in I$ to be
$w\sim_iwr_i$, i.e.,
 each part in the $i$-partition of $\W$ consists of two elements. Notice that $\W$ is
 connected since $S$ generates $\W$. There is a partial order among residues defined by
 setting $S \le R$ if $S\supset R$. The residues $T$ for which $S \le T$ implies $S=T$ are called the \emph{vertices} of $\mathscr{C}(\W)$. Denote the set of vertices by $\mathcal{V}$. One associates to a residue $S$ the subset $S' \subset \mathcal{V}$ defined by $S' = \{v\in \mathcal{V}| v \le S\}$, and call $S'$ an $i$-\emph{simplex} if its cardinality is $i+1$. The set simplices in $\mathcal{V}$ is denoted by $\Delta(\W)$.

 \smallskip

 The Coxeter complex,  $\Delta(\W)$ associated to a Coxeter system $(\W(\M),S)$ also provides an example of an (abstract) simplicial complex:

 \no Recall that an  \emph{(abstract) simplicial complex} is a nonempty family $\mathcal{S}$ of finite subsets (called \emph{simplices}) of a  set $V$
so that $\{v\}\in \mathcal{S}$ for every $v\in V$ and every subset
of a simplex in $\mathcal{S}$ is a simplex in $\mathcal{S}$ (called
a \emph{face}). (The simplices consisting of one element are called
\emph{vertices}.)

\smallskip

One has the following general facts (see e.g. \cite{Dav} page 179, Theorem 10.1.5 and Lemma 10.1.6):

 \begin{thm}\label{1-connected} For any reflection group $\W$ acting on a simply connected Riemannian manifold $\Sigma$, $(\W,S)$ is a Coxeter system, where $S$ are reflections in $\W$ corresponding to the faces of a chamber $C$, and $\W_c$ is trivial. If $\Sigma$ is compact, $\W$ is finite and isomorphic to a spherical reflection group.
\end{thm}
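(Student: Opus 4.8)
The plan is to exploit the fact that $\Sigma$ is simply connected together with the abstract description of Coxeter systems via their action on a simply connected complex. First I would recall the standard setup: the mirrors $\Lambda_{\r}$, $\r$ a reflection in $\W$, are totally geodesic hypersurfaces in $\Sigma$ (each being a codimension-one component of a fixed point set of an isometric involution), the chambers are the closures of the connected components of $\Sigma\setminus\bigcup_{\r}\Lambda_{\r}$, and $\W$ acts transitively on the set of chambers. Fix a chamber $C$, let $S=\{r_1,\dots,r_k\}$ be the reflections through the codimension-one faces $F_1,\dots,F_k$ of $C$, and let $\W_0=\langle S\rangle\le\W$. The first step is to show $\W_0=\W$: since every mirror meets the orbit $\W C$ and adjacent chambers across a face $F_i$ differ by the corresponding $r_i$, a gallery argument (walk from $C$ to $wC$ crossing one mirror at a time) shows every generator-of-a-mirror reflection lies in $\W_0$, and since reflections generate $\W$ (they are all conjugate by elements moving one mirror to another, hence conjugate into $S$) we get $\W_0=\W$.

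The heart of the argument is to identify $(\W,S)$ with the Coxeter system of the Coxeter matrix $\M=(m_{ij})$, where $m_{ij}$ is read off from the angle $\pi/m_{ij}$ between $\Lambda_i$ and $\Lambda_j$ (with $m_{ij}=\infty$ if the mirrors are disjoint). There is a canonical surjection $\phi\colon\W(\M)\twoheadrightarrow\W$ sending the abstract generator $r_i$ to $r_i\in S$; it is well defined because the relations $(r_ir_j)^{m_{ij}}=1$ hold in $\W$ (as noted in the excerpt, $\langle r_i,r_j\rangle$ is dihedral of the right order whenever the mirrors meet, and there is no relation to check when they don't). To see $\phi$ is injective I would use the geometric-realization/development argument: form the Davis-type complex $U=\W(\M)\times C\,/\!\sim$ obtained by gluing copies of the closed chamber $C$ along their faces according to the adjacency combinatorics of $\mathscr{C}(\W(\M))$, so that $U$ carries a $\W(\M)$-action and a $\W(\M)$-equivariant "developing" map $D\colon U\to\Sigma$ which on each copy $\{w\}\times C$ is $\phi(w)$ restricted to $C$. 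One checks $D$ is a local homeomorphism (this is where the angle condition $\pi/m_{ij}$ at the codimension-two faces is used — it is exactly what makes the link of a codimension-two stratum match up, so $U$ is a manifold and $D$ an immersion, in fact a covering onto $\Sigma$ because $\Sigma$ is complete). Since $\Sigma$ is simply connected, $D$ is a homeomorphism; equivariance of $D$ then forces $\phi$ to be injective and simultaneously shows that $\W_c$, the stabilizer of $c$ in $\W$, is trivial (a nontrivial element stabilizing $C$ would give a nontrivial deck transformation of $D$). This establishes that $(\W,S)$ is a Coxeter system and $\W_c=1$; alternatively one may simply cite the treatment of this development argument in \cite{D1}, which is the intended route given the phrasing "One has the following general facts (see e.g.\ \cite{D1})".

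Finally, the compact case: if $\Sigma$ is compact then $\W$ is a discrete group of isometries of a compact manifold acting with a chamber $C$ as strict fundamental domain, hence $\W$ is finite. A finite Coxeter group generated by reflections (in the linear-algebraic sense once we linearize at a fixed point, or directly by the classification of finite Coxeter groups) is isomorphic to a Euclidean reflection group; concretely, averaging gives a $\W$-invariant inner product on the tangent space at a point of $\mathrm{Fix}(\W)$ — which is nonempty since $C$ has a vertex, the intersection of its faces being $\W$-fixed when $\W$ is finite and $\Sigma$ connected — and the isotropy (slice) representation there realizes $\W$ faithfully as a finite reflection group on Euclidean space, as desired.

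The main obstacle is the injectivity of $\phi\colon\W(\M)\to\W$, equivalently showing the developing map $D\colon U\to\Sigma$ is a covering: this requires verifying that $U$ is a manifold near the codimension-two (and higher) strata, which is precisely where the dihedral angle data $\pi/m_{ij}$ enters and where simple connectivity of $\Sigma$ is consumed. Everything else is either a gallery-connectivity bookkeeping argument or a standard fact about finite reflection groups.
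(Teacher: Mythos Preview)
The paper does not prove this theorem; it is stated as a known fact with a reference to \cite{D1}. Your development/Davis-complex argument is precisely the standard approach one finds there, so your route aligns with the intended one.

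There is, however, a small error in your treatment of the compact case. You claim $\Fix(\W)\neq\emptyset$ because ``$C$ has a vertex, the intersection of its faces being $\W$-fixed''. The intersection of \emph{all} codimension-one faces of a chamber is typically empty: for any irreducible finite Coxeter group acting on the sphere (e.g.\ $\A_3$ on $\Sph^2$) the chamber is a simplex, its faces have no common point, and indeed $\Fix(\W)=\emptyset$ --- exactly as the paper records later in Lemma~\ref{w-sphere}. So the slice-representation argument at a putative fixed point does not go through in general. Your parenthetical alternative, ``directly by the classification of finite Coxeter groups'', is the correct way to finish: once $(\W,S)$ is a finite Coxeter system, Coxeter's theorem (see Remark~\ref{finite cox}) identifies it with a finite Euclidean reflection group. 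You should drop the fixed-point sentence and rely on that.
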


\begin{rem} \label{finite cox}The Coxeter groups that we will deal with in this paper will all be finite. By a theorem of Coxeter,
 Coxeter systems of rank $k+1$ are in one to one correspondence with finite
subgroups of $\O(k+1)$ that are generated by reflections in
hyperplanes of $\R^{k+1}$
 and only fix the origin.
  Such groups have been classified. Let $(\W,S)$ be a Coxeter system of rank $k$ acting as
   a reflection group on $\R^{k+1}$, and consider its restriction to  $\Sph^{k}$. In this case mirrors are of course great spheres $\Sph^{k-1}$, and the Coxeter group $\W$
   acts simply transitively on the set of chambers. Each chamber is a spherical $k$-simplex and the corresponding triangulation of $\Sph^{k}$ is the \emph{geometric realization} of the Coxeter complex $\Delta(\W)$ associated to a Coxeter system $(\W,S)$.

 The geometry of this representation is also reflected in the Coxeter diagram of $\M$. For example, this diagram  is connected if and only if this action is irreducible.
 Each node corresponds to a codimension one face simplex, $i$, and $\pi/m_{ij}$ is the angle between the corresponding $i$ and $j$ faces of  the $k$-simplex $\Sph^k/\W$. The Coxeter diagram for the isotropy  group of $\W$ at the vertex opposite of face $i$ is obtained from the Coxeter diagram of $\W$ by removing the $i$-th node.
   \end{rem}

  We note that the chambers for the Coxeter system $(\W,S)$ in Theorem \ref{1-connected} when
  $\Sigma$ is a compact $k$ manifold combinatorially are the
  same as the spherical $k$ simplices of its representation above. Geometrically, it follows in particular that all angles in a chamber of $\Sigma$ are the same as the corresponding angles in the spherical simplex.

  Although $\A_2$ is an irreducible Coxeter group, we point out that all the linear 3-dimensional representations presented in Example \ref{trivial examples} above are reducible.
  We conclude this section with important examples of irreducible
Coxeter groups:

 \begin{example}  Finite Coxeter groups that are isomorphic to finite and irreducible reflection groups acting on $\mathbb{R}^3$ will play a special role in some of our proofs.
 There are three such
 groups that in the classification of finite Coxeter (or reflection) groups are given the symbols
 $\A_3, \CC_3, \H_3$.

 The group $\A_3$ is isomorphic to the symmetric group on four letters. It is the group of symmetries of a regular tetrahedron. Its order is $24$. The $2$-simplexes in the triangulation
 explained
 above have angles $\pi/2$, $\pi/3$, and $\pi/3$ at the vertices.

 The group $\CC_3$ is the symmetry group of a regular cube (or dually of a regular
 octahedron). Its order is $48$. The $2$-simplices in the triangulation
 have angles $\pi/2$, $\pi/3$, and $\pi/4$ at the vertices.

 The group $\H_3$ is the symmetry group of a regular dodecahedron (or dually of a
 regular icosahedron). Its order is $120$. The $2$-simplices in the triangulation have angles $\pi/2$, $\pi/3$, and $\pi/5$ at the vertices.
 Note, that the occurrence of the angle $\pi/5$ \emph{excludes} $\H_3$ as a Coxeter group of a polar action.
 \end{example}

   \section{Sections and Coxeter Groups}  
We assume from now on that $M$
is a \emph{positively curved polar $\G$-manifold of cohomogeneity at least two}. This will yield
strong restrictions on all the basic items presented in Section \ref{sectionone}.
In particular, we will prove that sections are either spheres or
real projective spaces.

When $M$ is simply connected, we show that the polar group is a Coxeter group when
the section is a sphere, and a $\Z_2$ quotient of such a group when
the section is a real projective space; in either case the action is
linear as stated in Theorem B of the introduction.

The starting point is the following

\begin{lem}[Singular Orbit] \label{sing} Any positively curved polar $\G$-manifold has singular orbits.
\end{lem}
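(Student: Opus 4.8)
The plan is to argue by contradiction: assume $M$ is a positively curved polar $\G$-manifold with no singular orbits, and derive a contradiction with positive curvature. If there are no singular orbits, then the slice representations at all points are trivial away from possible exceptional or nonprincipal orbits; more precisely, the absence of singular orbits forces the reflection group $\W$ of the section $\Sigma$ to be trivial (reflections arise precisely from maximal singular isotropy groups along $\Sigma$). Then each mirror is empty, so the whole section is a single chamber $C = \Sigma$, and the orbit space $M^* = \Sigma/\Pi$ is a manifold without boundary. Since the cohomogeneity is positive, $\Sigma$ has positive dimension. The first key step is therefore to understand what such a boundaryless orbit space forces: the section $\Sigma$ is a totally geodesic submanifold of the positively curved manifold $M$, hence is itself positively curved, and so is compact with finite fundamental group.

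Next I would exploit Synge-type/soul-type reasoning on the orbit space or, more directly, on $M$ itself. The cleanest route is via the structure at a minimal-dimensional orbit. By compactness, choose an orbit $\G p$ of minimal dimension; this is automatically a singular orbit unless all orbits have the same dimension. So the contradiction hypothesis is equivalent to: all $\G$-orbits are principal. Then $M \to M^*$ is a Riemannian submersion (orbifold) fiber bundle with fiber $\G/\H$ over a closed manifold/orbifold $M^*$, and the section $\Sigma$ maps to $M^*$ as an orbifold cover $\Sigma \to \Sigma/\Pi = M^*$. The main point is now a curvature obstruction: a closed totally geodesic submanifold $\Sigma$ of positive codimension in a positively curved $M$ must intersect every other closed totally geodesic submanifold of complementary-or-larger dimension (Frankel's theorem / Synge's argument on the energy of a connecting geodesic). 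Applying this to $\Sigma$ and a translate $\g\Sigma$, or to $\Sigma$ and an orbit $\G p$ of dimension $\ge \dim M - \dim\Sigma = \operatorname{codim}\Sigma$, forces intersections; but a principal orbit meets the section $\Sigma$ in a single $\W$-orbit, and here $\W$ is trivial, so it meets $\Sigma$ in one point — yet a connectedness/Frankel count shows the orbit and section should meet transversally in a positive-dimensional set or in more than the expected number of points, which is the contradiction. (Equivalently: Wilking's dual-foliation result, invoked later in the paper, shows the dual leaf through a point is everything in positive curvature, which is incompatible with a product-free cohomogeneity-$\ge 1$ action having only principal orbits and a closed section — but I would prefer the elementary Frankel argument to keep the lemma self-contained.)

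The anticipated main obstacle is pinning down the exact topological/curvature statement that rules out the "all orbits principal" configuration cleanly, since one must handle the case where $M^*$ is an orbifold (so $\Sigma$ need not equal $M^*$ and $\Pi$ may be infinite a priori) and the case of low cohomogeneity separately. For cohomogeneity one this is classical: a positively curved cohomogeneity-one manifold has exactly two singular orbits (the orbit space is an interval, not a circle, because a circle orbit space would make $M$ a mapping torus, forcing $\pi_1$ large or a product structure, contradicting positive curvature via Bonnet–Myers plus Synge on the $\Z$-cover). For higher cohomogeneity one reduces to a slice: pick a point $q\in\Sigma$ on a stratum of minimal dimension; if that stratum were all of $\Sigma$ then $M^*$ is a closed manifold and one runs the Frankel argument; if not, the slice representation at $q$ is a lower-cohomogeneity polar representation which by induction has a singular orbit, producing a singular $\G$-orbit through (a point near) $q$. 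I would organize the write-up as this induction on cohomogeneity, with the base case cohomogeneity one handled by the interval-versus-circle dichotomy and the inductive step by passing to slice representations, invoking along the way that slice representations of isotropy groups are again polar (stated among the "simple and well known" facts above).
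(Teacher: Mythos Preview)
Your proposal has genuine gaps. The paper's proof is a one-paragraph application of a splitting theorem: when all orbits are principal, the normal distribution to the orbit foliation is globally defined and integrable (its leaves are exactly the sections). Walschap's theorem \cite{Wa} then asserts that in nonnegative curvature a metric foliation with integrable normal bundle has totally geodesic leaves and the metric is locally a product; a local product metric cannot have strictly positive sectional curvature, and that is the contradiction.

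Your Frankel argument does not go through as written. Frankel's theorem needs both submanifolds to be closed and totally geodesic, but a priori the $\G$-orbits are not totally geodesic --- establishing that is precisely the hard step, and it is what Walschap's result supplies. Applying Frankel instead to two sections $\Sigma$ and $\g\Sigma$ requires $2\dim\Sigma \ge \dim M$, i.e.\ cohomogeneity at least the orbit dimension, which fails in general; and even when it holds, the bare conclusion ``they intersect'' gives no contradiction, since a principal orbit meeting $\Sigma$ in a single $\Pi$-orbit is perfectly consistent with Frankel. Your induction also collapses: if every orbit is principal then every isotropy group equals $\H$ and every slice representation is trivial, so there is no lower-cohomogeneity polar representation to which you can descend --- the ``if not'' branch of your dichotomy never occurs. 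Only your cohomogeneity-one argument (orbit space a circle forces $\pi_1(M)$ to surject onto $\Z$, contradicting Bonnet--Myers) is correct, but it does not generalize by the route you sketch.
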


\begin{proof} If all orbits have maximal dimension, the normal distribution  is globally defined and integrable with leaves the sections of $M$.
Since in particular the sectional curvature of $M$ is nonnegative,
it now follows from Theorem 1.3 in \cite{Wa} that the orbits of $\G$
are totally geodesic, and that the metric on $M$ locally is a
product metric. This is a contradiction since the sectional
curvature of $M$ is actually positive.
\end{proof}

\begin{rem}
For a nonnegatively curved polar $\G$-manifold the same conclusion
holds unless $M = \Sigma \times_{\Pi} \G/\H$ is locally metrically a
product. If in addition $M$ is simply connected, $M = \Sigma \times
\G/\H$ with a product metric.
\end{rem}

From Section 1, we know in particular that  the reflection group $\W
\subset \Pi$ is nontrivial and that $\partial M^* = \partial
\Sigma^*$ is nonempty. This already is sufficient to prove

\begin{prop}[Section] \label{dif-sec}
Let $M$ be a compact positively curved polar manifold. Then any
section $\Sigma$ is diffeomorphic to either  a sphere $\Sph^k$ or a
real projective space $\RP^k$. In particular, the polar group $\Pi$
is finite.
\end{prop}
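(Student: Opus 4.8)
The plan is to reduce first to a purely intrinsic statement about $\Sigma$, and then to a cut-and-paste argument using the soul theorem. Since a section $\sigma\colon\Sigma\to M$ is totally geodesic, the Gauss equation (with vanishing second fundamental form) identifies the sectional curvatures of the induced metric on $\Sigma$ with those of the corresponding $2$-planes in $M$, so $\Sigma$ is positively curved; and $\Sigma$ is complete, its geodesics being geodesics of the compact — hence complete — manifold $M$. By Bonnet--Myers, $\Sigma$ is compact with finite fundamental group. The last assertion of the Proposition is then immediate: $\Pi$ is a discrete subgroup of the compact Lie group $\N(\H)/\H$ — equivalently, it acts properly discontinuously on the now compact $\Sigma$ — hence $\Pi$ is finite. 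Everything thus comes down to the diffeomorphism type of $\Sigma$.

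For that, I would argue as follows. By the remarks preceding the Proposition (which rest on \lref{sing}), the reflection group $\W\subset\Pi$ is nontrivial, so I may fix a reflection $\r\in\W$ with a mirror $\Lambda$ — a nonempty closed embedded totally geodesic hypersurface, namely a codimension-one component of $\Sigma^{\r}$. Passing to the metric completion $W$ of $\Sigma\setminus\Lambda$ yields a compact manifold whose boundary is totally geodesic, hence convex, and whose interior has $\sec>0$; by the soul theorem for manifolds with (weakly) convex boundary, every component of $W$ is a normal disk bundle over its soul, and since $\sec>0$ the soul is a point, so every component is diffeomorphic to $D^{k}$, $k=\dim\Sigma$. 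Three cases occur. If $\Lambda$ is two-sided but non-separating, $W$ is connected with $\partial W$ equal to two disjoint copies of $\Lambda$, which is impossible since $\partial D^{k}=\Sph^{k-1}$ is connected. If $\Lambda$ separates (hence is two-sided), then $W=\overline U_{1}\sqcup\overline U_{2}$ with $\partial\overline U_{i}=\Lambda\cong\Sph^{k-1}$; the reflection $\r$ interchanges $\overline U_{1},\overline U_{2}$ and restricts to the identity on $\Lambda$, so $\Sigma$ is the double of $\overline U_{1}\cong D^{k}$ and $\Sigma\cong\Sph^{k}$. If $\Lambda$ is one-sided it does not separate, $W$ is connected with $\partial W$ the connected normal orientation double cover $\hat\Lambda\to\Lambda$, and $W\cong D^{k}$ forces $\hat\Lambda\cong\Sph^{k-1}$, so $\Lambda\cong\Sph^{k-1}/\Z_{2}$ and $\Sigma$ is obtained from $D^{k}$ by the induced free involution on $\partial D^{k}$; the goal in this case is to recognize this as $\RP^{k}$.

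The one-sided case will be the main obstacle. For $k\le 2$ it is immediate from Gauss--Bonnet ($\Sigma$ is a positively curved surface, hence $\Sph^{2}$ or $\RP^{2}$, and $\Sigma=\Sph^{1}$ for $k=1$), so assume $k\ge 3$. Then $\pi_{1}(\Sigma)\cong\pi_{1}(\Lambda)\cong\Z_{2}$ (since $\Lambda$ is covered by $\Sph^{k-1}$ and attaching the single $k$-cell does not change $\pi_{1}$ when $k\ge 3$), so the universal cover $\tilde\Sigma\to\Sigma$ is a double cover of a simply connected, compact, positively curved manifold which by \tref{1-connected} and \rref{finite cox} carries a finite Coxeter reflection group $\tilde\W$ with trivial chamber stabilizer. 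Since $H_{k-1}(\tilde\Sigma;\Z_{2})\cong H^{1}(\tilde\Sigma;\Z_{2})=0$, every mirror of $\tilde\W$ separates $\tilde\Sigma$, so the second case above applies and gives $\tilde\Sigma\cong\Sph^{k}$; moreover, as a reflection action on a manifold is determined up to equivariant diffeomorphism by its Coxeter matrix and the simplicial structure of a chamber (cf. \tref{1-connected} and \cite{D1}), the $\tilde\W$-action is equivariantly diffeomorphic to the standard linear reflection action on $\Sph^{k}$. The deck group $\Z_{2}=\pi_{1}(\Sigma)$ then acts freely on $\Sph^{k}$ and normalizes $\tilde\W$ (it covers the trivial action on $\Sigma$, hence permutes the preimages of the mirrors), and a routine analysis of the normalizer of a finite reflection group in $\Or(k+1)$ shows that its only nontrivial subgroup acting freely on $\Sph^{k}$ is $\langle-\mathrm{id}\rangle$; hence the deck action is antipodal and $\Sigma\cong\RP^{k}$. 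The delicate points — and the places where positive curvature and the reflection-group structure are genuinely used — are the invocation of the soul theorem with convex boundary and this last step, which rules out exotic free $\Z_{2}$-identifications and cannot be obtained from the soul argument alone.
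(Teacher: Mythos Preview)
Your reduction to the soul argument and your treatment of the separating case are correct and match the paper's approach. The gap is in the one-sided case, at the point where you try to identify the free deck involution on $\tilde\Sigma\cong\Sph^{k}$ with the antipodal map.

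First, the linearization step is not justified by what you cite: \tref{1-connected} only says that $(\tilde\W,S)$ is abstractly a Coxeter system with trivial chamber stabilizer, and Davis's book does not give that a finite reflection action on a sphere is equivariantly linear. (The paper does eventually prove this, in \tref{coxeter section}, but \emph{after} and \emph{using} the present Proposition, so invoking it here would be circular.) Second, and more seriously, even if you grant that $\tilde\W$ can be conjugated into $\O(k+1)$, the deck transformation $\a$ is only a diffeomorphism centralizing $\tilde\W$, not a priori an element of $\O(k+1)$; the centralizer of a finite reflection group in $\mathrm{Diff}(\Sph^{k})$ is vastly larger than its centralizer in $\O(k+1)$, and there \emph{do} exist exotic free involutions on $\Sph^{k}$ whose quotients are fake real projective spaces. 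In the minimal situation relevant here --- a single reflection $\tilde\r$ --- the diffeomorphism centralizer of $\langle\tilde\r\rangle$ imposes essentially no constraint on $\a$. So your ``routine normalizer analysis in $\O(k+1)$'' does not apply.

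The paper closes this gap by never separating the two equivariances: it constructs the diffeomorphism $\tilde\Phi\colon\Sph^{k}\to\tilde\Sigma$ so that it intertwines \emph{both} the reflection $\rho\leftrightarrow\tilde\r$ \emph{and} the antipodal map $-\mathrm{id}\leftrightarrow\a$ simultaneously. The key geometric input is that the distance function to the mirror is invariant under the full group $\langle\tilde\r,\a\rangle$, so the soul point of $V_{+}$ is fixed by $\tilde\r\a$, and one can choose the disk parametrization $\phi\colon\Disc^{k}_{+}\to V_{+}$ to be equivariant with respect to $\tilde\r\a$ (which acts antipodally on both boundary spheres and fixes the center/soul). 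Then $\tilde\Phi$, defined by $\phi$ on the upper hemisphere and $\tilde\r\phi\rho$ on the lower, is automatically $\langle\rho,-\mathrm{id}\rangle$-equivariant, and descends to the desired diffeomorphism $\RP^{k}\to\Sigma$.
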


\begin{proof}
Let $\r$ be a reflection, with mirror $\Lambda$ and $E \subset
\Lambda$ a component. Since the curvature is positive the (local)
distance function to $E$ is strictly concave. In particular, the
complement $\Sigma - D_{\epsilon}(E)$ of a small tubular
neighborhood of $E$ is a (locally) convex set with boundary
$\partial D_{\epsilon}(E)$. This set either has  one or two
components corresponding to the boundary having one or two
components. In either case, each component is a disc by the standard
``soul argument", and in fact $E = \Lambda$. The key fact here is
that the distance function to the boundary is strictly concave and
hence has a unique point at maximal distance called the \emph{soul
point}. Moreover the distance function to the soul point has no
critical points. For the arguments and constructions below it is
also important that the distance function is $\r$ invariant.

In the case, where $\Sigma - D_{\epsilon}(E)$ has two components,
$\Lambda = E$ separates $\Sigma$ into two manifolds $V_+$ and $V_-$
each with $\Lambda$ as a totally geodesic boundary.  In this case
the isometry $\r$ interchanges $V_+$ and $V_-$. Moreover, the
diffeomorphism $\phi$ say from the upper hemisphere $\Disc^k_+$ of
$\Sph^k$ to $V_+$ can be chosen so that the north pole of
$\Disc^k_+$ goes to the soul point of $V_+$, and the image of the
gradient lines to the north pole of $\Disc^k_+$ are ``radial" near
$E$ and the soul point. The map $\Phi : \Sph^k \to \Sigma$ defined
by $\Phi = \phi$ on $\Disc^k_+$ and $\Phi = \r \phi \rho$ on
$\Disc^k_-$ is a diffeomorphism which is equivariant relative to the
reflections $\rho$ in the equator of $\Sph^k$ and $\r$ on $\Sigma$.

In the case where $\Sigma - D_{\epsilon}(E)$ has one component,
$\r$ fixes its soul point and acts freely elsewhere: In fact $\r$
clearly acts freely in $D_{\epsilon}(E) - E$, so by convexity $\r$
can only have isolated fixed points in  $\Sigma - D_{\epsilon}(E)$.
Moreover, if there was an isolated fixed point in addition to the
soul point a minimal geodesic between them would be reflected to a
closed geodesic which is impossible by convexity. In particular,
$\Lambda = E = \RP^{k-1}$ and $\Sigma$ has fundamental group $\Z_2$.
In the two fold universal cover $\tilde{\Sigma}$ of $\Sigma$, the
lift $\tilde{ \Lambda}$ splits $\tilde{\Sigma}$ into two convex
components $V_+$ and $V_-$ with common totally geodesic boundary
$\tilde{ \Lambda}$, as in the first part. The reflection $\r$ lifts
to a reflection $\tilde{\r}$ interchanging  $V_+$ and $V_-$, each
being mapped isometrically by the projection map to $\Sigma -
\Lambda$ (see also remark \ref{invol}). Choosing a diffeomorphism
$\phi$ say from the upper hemisphere $\Disc^k_+$ of $\Sph^k$ to
$V_+$ as before, the map $\tilde{\Phi}:  \Sph^k \to \tilde{\Sigma}$
defined by $\tilde{\Phi} = \phi$ on $\Disc^k_+$ and $\tilde{\Phi} =
\r \phi \tilde \rho$ on $\Disc^k_-$ is a diffeomorphism which is
equivariant relative to the reflection $\tilde \rho$ on $\Sph^k$ and
$\tilde \r$ on $\tilde{\Sigma}$, and in addition by construction
equivariant relative to the antipodal map $-{\rm id}$ on $\Sph^k$
and the \emph{deck transformation} $\a$ of $\tilde \Sigma$. We conclude
that $\tilde{\Phi}$ induces a diffeomorphism $\Phi: \RP^k \to
\Sigma$ which is equivariant relative to the reflections $\rho$ on
$\RP^k$ induced from $\tilde \rho$ and $\r$ on $\Sigma$.
 \end{proof}

 \begin{rem}\label{invol}
 During the proof of the result above we note in particular that if $\r \in \W$ is a reflection of the section $\Sigma$ with mirror $\Lambda$,
then:
\begin{itemize}
\item
If $\Sigma$ is a sphere, $\Fix(\r) = \Lambda$ and $\Lambda$ is a
codimension one sphere.
 \item
If $\Sigma$ is a projective space, $\Fix(\r) = \Lambda \cup s$,
where $s$ is the soul point at maximal distance to $\Lambda$, and
$\Lambda$ is a real projective space of codimension one. - Note in addition that $\r$  also lifts to a map preserving $V_{\pm} \subset \tilde M$ and
acting as $\a$ on $\tilde \Lambda$
 \end{itemize}

\no In particular, \emph{mirrors are connected}, and if $\Pi =
\langle \r \rangle$ , the result above gives a complete equivariant
description of $(\Sigma, \Pi)$.
\end{rem}

The proof above also allows us to derive further information about
the reflection group $\W$ and the corresponding  open chambers and
orbit space $\Sigma/\W$:

\begin{lem}[Sphere Chamber]\label{w-sphere}
Assume $\Sigma$ is a $k$-dimensional sphere, and $c$ is an open
chamber. Then
\begin{itemize}
\item
Intersections of mirrors are spheres, and the closure $C$  of $c$ is a
convex set in $\Sigma$.
\item
There are at most $k+1$ chamber faces, and the intersection of all
of them is $\Fix(\W)$.
\item
If there are $k+1$ chamber faces, then $C$ is a $k$-simplex, and
$\Fix(\W) = \emptyset$.
\item
If there are  $\ell+1 < k+1$ chamber faces, then $C$ is the join of
$\Fix(\W)$ with an $\ell$-simplex.
\end{itemize}

Moreover, $C$ is a fundamental domain for $\W$ and $\Sigma/\W = C$.
\end{lem}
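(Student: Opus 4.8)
The plan is to read the purely combinatorial assertions off Theorem~\ref{1-connected} and Remark~\ref{finite cox}, and to obtain the metric assertions --- total convexity of $C$, the join decomposition --- from the positive-curvature soul argument already used in the proof of Proposition~\ref{dif-sec}, together with the linearity of slice representations. For $k\ge 2$ the section $\Sigma$ is simply connected, so by Theorem~\ref{1-connected} $(\W,S)$ is a finite Coxeter system with trivial chamber group $\W_c$, where $S=\{\r_0,\dots,\r_\ell\}$ are the reflections in the $\ell+1$ faces $F_0,\dots,F_\ell$ of $C$ (the case $k=1$ being elementary). Triviality of $\W_c$, via the discussion in Section~1 following Example~\ref{trivial examples}, at once gives $\Sigma/\W=C$, that $C$ is a fundamental domain, and that $\W$ permutes chambers simply transitively --- the ``Moreover''. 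It remains to prove that $\ell\le k$, that the faces of $C$ meet in $\Fix(\W)$, that intersections of mirrors are spheres, that $C$ is totally convex, and the simplex/join dichotomy.

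\emph{Convexity.} By the Remark after Proposition~\ref{dif-sec} each mirror $\Lambda=\Fix(\r)$ of a reflection $\r\in\W$ is a totally geodesic hypersphere, and since $\sec>0$ the local distance function to it is strictly concave (as in the proof of Proposition~\ref{dif-sec}), so $\Lambda$ splits $\Sigma$ into two totally convex closed half-balls. Let $\bar V_i\supset c$ be the half bounded by $\Lambda_i$. Then $U:=\bigcap_{i\in S}\operatorname{int}\bar V_i$ is a finite intersection of totally convex sets, hence totally convex and connected, it contains $c$, and it meets no mirror of $\W$: by standard Coxeter-complex combinatorics a minimal gallery leaving the fundamental chamber first crosses a simple mirror and crosses no mirror twice, so $c$ is the only chamber on $c$'s side of all of $\Lambda_0,\dots,\Lambda_\ell$. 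Hence $U=c$, and $C=\bar c=\bigcap_{i\in S}\bar V_i$ is totally convex; consequently each $\Lambda_J:=\bigcap_{j\in J}\Lambda_j$ is totally convex, hence connected, while $\Fix(\W)=\bigcap_i\Lambda_i\subseteq\bigcap_i\bar V_i=C$, so $\bigcap_iF_i=C\cap\bigcap_i\Lambda_i=\Fix(\W)$.

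\emph{The local model and the remaining claims.} At $p\in\Lambda_J$ the slice representation of $\G_p$ is a \emph{linear} polar representation, so on its section $\sigma_*(T_p\Sigma)\cong\R^k$ the finite linear reflection group $\W_p$ contains $\langle\r_j:j\in J\rangle$ with each $\Lambda_j$ a hyperplane; thus near $p$, $\Lambda_J$ is the $\exp_p$-image of a linear subspace, and being totally geodesic and connected it is globally such. By induction on $\dim\Sigma$ --- the mirror $\Lambda_j\cong\Sph^{k-1}$ being a section of the induced positively curved polar action on a component of $\Fix(\K_j)$ for $\N(\K_j)$ (cf.\ \cite{GZ,HPTT}), with reflection group the parabolic $\langle\r_i:i\ne j\rangle$ whose mirrors are the $\Lambda_J$ with $j\in J$ --- the lemma applied to $\Lambda_j$ shows all such $\Lambda_J$, and in particular $\Fix(\W)$, are spheres. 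Taking $p$ generic in $\bigcap_{m\ne i}F_m$, the simple roots of the rank-$\ell$ parabolic $\langle\r_m:m\ne i\rangle\subseteq\W_p$ are $\ell$ linearly independent vectors of $\R^k$, whence $\ell\le k$, and there $C$ is a simplicial cone on a spherical $(\ell-1)$-simplex times the orthogonal $\R^{k-\ell}$. Since $C=\bigcap_{i\in S}\bar V_i$ is a totally convex region bounded by exactly the $\ell+1$ totally geodesic hyperspheres $\Lambda_0,\dots,\Lambda_\ell$, all containing the subsphere $\Fix(\W)\cong\Sph^{k-\ell-1}$, these local models force $C$ to be the join of $\Fix(\W)$ with a spherical $\ell$-simplex; this subsphere is empty precisely when $\ell=k$, in which case $C$ is a genuine $k$-simplex.

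\emph{The main obstacle.} The delicate step is the last one: promoting the \emph{local} linear pictures at the faces --- especially along $\Fix(\W)$ --- to the \emph{global} statement that $C$ is a spherical $\ell$-simplex joined with a sphere, with the clean dichotomy $\ell=k$ versus $\ell<k$. This is where one has to play off against each other that $\W$ is a genuine finite reflection group with the standard Coxeter-complex combinatorics (Theorem~\ref{1-connected}, Remark~\ref{finite cox}, and the note after it identifying the combinatorics of $C$ with that of the spherical simplex of the essential linear model of $\W$) \emph{and} the positive-curvature total convexity of $C$: neither ingredient by itself pins down the global shape, since $\Sigma$ is only diffeomorphic --- not isometric --- to a round sphere. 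A clean way to organize this globalization is the induction on $\dim\Sigma$ indicated above, reconstructing $C$ from its totally convex codimension-one faces $C\cap\Lambda_i$ --- each, by induction, a simplex joined with its own fixed sphere --- using that, by total convexity, $C$ is swept out by the minimizing geodesics from the opposite face $C\cap\bigcap_{m\ne i}\Lambda_m$ to $F_i$.
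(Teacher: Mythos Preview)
Your convexity argument --- writing $C=\bigcap_i \bar V_i$ as an intersection of closed half-spaces using the Coxeter combinatorics granted by Theorem~\ref{1-connected} --- is valid and arguably tidier than the paper on that single point. But the two remaining steps have genuine gaps, and the paper's route is both simpler and different.

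For ``intersections of mirrors are spheres'' you invoke a polar reduction: $\Lambda_j$ as a section for the $\N(\K_j)$-action on a component of $M^{\K_j}$, carrying the parabolic $\langle \r_i:i\ne j\rangle$ as its reflection group. This fails on two counts. A generic point of $\Lambda_j$ (one not in the $\W$-orbit of $\bar F_j$) has $\G$-isotropy only \emph{conjugate} to $\K_j$, not containing it, so $\Lambda_j\not\subset M^{\K_j}$ and is not the section of that reduction. And the parabolic $\langle \r_i:i\ne j\rangle$ does not act on $\Lambda_j$ at all: already in type $\A_2$, $\r_1$ does not preserve $\Lambda_2$. The paper bypasses reductions entirely and simply iterates the soul argument of Proposition~\ref{dif-sec}: at any $p\in\Lambda_1\cap\Lambda_2$ the slice picture shows $\Lambda_{12}$ is totally geodesic of codimension one in the positively curved sphere $\Lambda_1$, hence itself a sphere by the same convexity argument; then $\Lambda_{123}$ is a sphere inside $\Lambda_{12}$; and so on. The bound $\ell\le k$ falls out of this dimension count, since mirrors corresponding to distinct chamber faces give strictly decreasing intersections.

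Your simplex/join step (``these local models force $C$ to be the join\ldots'') is too vague to be a proof, and the induction you propose on $\dim\Sigma$ via the faces $C\cap\Lambda_i$ inherits the same unresolved reduction issue. The paper's argument here is the heart of the lemma and rests on an observation you never use: every dihedral angle of $C$ is $\pi/m_{ij}\le\pi/2$, so the distance function on $C$ to any single face $F_0$ is strictly concave and has a \emph{unique} maximum $s_0$, which is then forced to lie in $\bigcap_{i>0}F_i$. A gradient-like vector field (radial near $s_0$, gradient-like on the remaining faces) exhibits $C$ as the cone from $s_0$ on $F_0$, and via $\exp_{s_0}$ identifies $F_0$ with the closure of a chamber for $\langle\r_1,\dots,\r_\ell\rangle$ in the unit tangent sphere at $s_0$. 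The induction runs on the number $\ell+1$ of chamber faces, not on $\dim\Sigma$, and the join with $\Fix(\W)$ appears automatically as the locus of soul points.
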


\begin{proof}
If there is only one mirror $\Lambda$ corresponding to one
reflection $\r$, $\W = \langle\r \rangle=\Z_2$ and $C$ is a closed
convex disc with boundary $\Lambda = \Fix(\W)$ and indeed a join of
$\Fix(\W)$ with a $0$-simplex, the soul point $s$ of  $C$ as we have
seen.

Now consider any two reflections, $\r_i, i=1,2$ with corresponding
mirrors $\Lambda_i$. If $p \in \Lambda_{12}:=\Lambda_1 \cap
\Lambda_2 = \Fix( \langle\r_1,\r_2 \rangle)$, clearly  $\r_i \in
\W_p$ the reflection group of the polar representation of the
isotropy group $\G_p$. In particular, $ \langle\r_1,\r_2 \rangle$ is
a dihedral group, and the angle between $\Lambda_1$ and $\Lambda_2$
is $\pi/k$ for some integer $k$. In particular, the intersection
$\Lambda_{12}$ is a codimension one totally geodesic submanifold of
either mirror $\Lambda_i$, and hence again by convexity is a sphere
(two points when the mirrors are 1-dimensional).

In general, consider $\ell$ mirrors $\Lambda_1, \ldots,
\Lambda_{\ell}$ such that the inclusions of iterated intersections
$\Lambda_{12} \supset \Lambda_{123} \supset \ldots \supset
\Lambda_{123 \ldots \ell}$ are all strict. Then each intersection is
a totally geodesic submanifolds of codimension one in the previous
intersection, and hence $\Lambda_{123 \ldots \ell}$ is a $(k-\ell)$
sphere. Also  $\Lambda_{123 \ldots \ell}$ is the set fixed
by all reflections $\r_i$ with corresponding mirror $\Lambda_i$.
This completes the proof of the first two ``bullets", since mirrors
corresponding to $\ell$ different chamber faces satisfy the needed
inclusion property.

Now suppose $C$ has $\ell+1$ chamber faces, $F_0, \ldots, F_{\ell}$.
Since the angle between any two faces is at most $\pi/2$ it follows as in the original Cheeger-Gromoll case of the distance function to the full boundary \cite{Pe}, that the distance function on $C$ to one face, say $F_0$ is
strictly concave (cf. Theorem 7 in \cite{Wi2}, Theorem 1.3 in \cite{GKi},  and Corollary 3.2 in \cite{Wo} for general Alexandrov spaces with boundary), and hence has a unique point at maximal distance,
its ``soul point", $s_0$. It follows that, $s_0$ is in the intersection of
the remaining chamber faces (Theorem 1.3 in \cite{GKi} and Lemma 3.4 in \cite{Wo} for general Alexandrov spaces noting that intersections of faces are extremal subsets). Moreover,
by convexity of super level sets the distance function to
$s_0$ on $C$ has no critical points. Using this and the basic fact that convex combination of ``gradient like vector fields" is ``gradient like" one constructs a \emph{gradient like} vector field (the angle between it and any minimal geodesic to $s_0$ is larger than $\pi/2$) which is radial near
$s_0$ and gradient like also when restricted to the remaining faces
intrinsically. In particular, $C$ is the cone on $F_0$ which in turn
is isotopic to a small metric ball in $C$ of radius $\epsilon$
centered at $s_0$. This also identifies $F_0$ with the boundary of
this $\epsilon$ ball, which via the exponential map is identified
with the closure of a chamber in the unit sphere at $s_0$
corresponding to the reflections $\r_1, \ldots, \r_{\ell}$. The
proof of the remaining two bullets is now completed by induction on
the number of chamber faces.
\end{proof}

We point out that this proof is a special case of a general result
about orbit spaces of positively curved manifolds due to Wilking
\cite{Wi2} (Theorem 7), related more directly to $\Sigma/\W$ in our context
however. We have included it here not only to make the exposition
more self contained, but also because it illuminates the particular
structure we have here.

\smallskip

We now turn to the case where the section $\Sigma$ is a projective
space. In this case, we will analyze the situation in its universal
cover $\tilde \Sigma$. Specifically, for each mirror $\Lambda$ in
$\Sigma$ corresponding to a reflection $\r$, we consider its lift
$\tilde \Lambda$ to $\tilde \Sigma$. As noted in the proof the section proposition \ref{dif-sec} and remark \ref{invol}, $\r$ has two canonical lifts. One of them is a
reflection  $\tilde \r$ in $\tilde \Lambda$, the other has two
isolated fixed points and restricts to $\a$ on  $\tilde \Lambda$.
Here we define $\tilde \W$ to be the reflection group on $\tilde
\Sigma$ generated by all $\tilde \r$, where we use all $\r$ from
$\W$. Note, that by construction, any lifted mirror is preserved by
$\a$, and that $\a$ commutes with any element from $\tilde \W$.
Combining this with the previous lemma one derives, whether $M$ is
simply connected or not, the following:

\begin{lem}[Projective Chamber]\label{w-proj}
Assume $\Sigma$ is a $k$-dimensional projective space and $\tilde
\Sigma$ the universal cover with deck transformation $\a$. Then
\begin{itemize}
\item
Intersections of lifted mirrors are spheres invariant under $\a$.
\item
The associated reflection group $\tilde\W$ of $\W$ may or may not
contain $\a$, but in either case $\W = \langle \tilde \W, \a \rangle
/\langle \a \rangle$. \item Open chambers  $c$ in $\Sigma$ are
isometric to open chambers $\tilde c$ for $\tilde \W$,
\item
The closure $\tilde C$ of an open chamber for $\tilde \W$ is a
convex set in $\tilde \Sigma$ with boundary the union of chamber
faces. Moreover, $C$ is obtained from $\tilde C$ by identifying $\a$
orbits in the boundary.
\item
$\tilde C$ has at most $k+1$ chamber faces, and the intersection of
them all is $\Fix(\tilde \W)$.
\item
If $\tilde C$ has $k+1$ chamber faces it is a $k$-simplex and
$\Fix(\tilde \W) = \emptyset$.
\item
If  $\tilde C$ has $\ell+1 < k+1$ chamber faces it is a join of
$\Fix(\tilde \W)$ and an $\ell$ simplex.
\end{itemize}
\no Moreover, $\Sigma/\W = \tilde \Sigma/ \langle \tilde \W,\a
\rangle = (\tilde \Sigma/ \langle \tilde \W\rangle )/\langle \a
\rangle = \tilde C/ \langle \a \rangle$.
\end{lem}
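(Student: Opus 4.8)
The plan is to bootstrap \lref{w-proj} from \lref{w-sphere} applied to the universal cover $\tilde\Sigma \cong \Sph^k$, using the deck transformation $\a$ as an extra symmetry that commutes with everything in $\tilde\W$. First I would recall the setup from the proof of \pref{dif-sec}: each mirror $\Lambda$ of a reflection $\r \in \W$ lifts to a codimension-one totally geodesic sphere $\tilde\Lambda \subset \tilde\Sigma$ (a great $\Sph^{k-1}$ once we know $\tilde\Sigma$ is a round-like sphere with convex chambers), and $\r$ has two lifts: the genuine reflection $\tilde\r$ fixing $\tilde\Lambda$, and the involution $\a\tilde\r$ with two isolated fixed points. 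Since $\a$ is the deck transformation of a double cover, $\a$ is a fixed-point-free isometric involution commuting with each $\tilde\r$ (both lifts of $\r$ project to $\r$, so they differ by the deck group), hence $\a$ normalizes $\tilde\W = \langle \tilde\r : \r \in \W\rangle$ and in fact centralizes it, and $\a$ preserves every lifted mirror $\tilde\Lambda$ and every iterated intersection thereof.

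The bullets then follow essentially one at a time. The group $\langle \tilde\W, \a\rangle$ is a finite reflection-with-center group on $\tilde\Sigma \cong \Sph^k$, and $\W = \langle\tilde\W,\a\rangle/\langle\a\rangle$ because the covering $\tilde\Sigma \to \Sigma$ intertwines the $\langle\tilde\W,\a\rangle$ action upstairs with the $\W$ action downstairs (each generator $\r$ is covered by $\tilde\r$, and $\a$ is killed); whether $\a \in \tilde\W$ or not is exactly the dichotomy already illustrated in \eref{trivial examples}(1)--(3). For the statements about $\a$-invariance of mirror intersections, apply the first bullet of \lref{w-sphere} to $\tilde\W$ to see these intersections are spheres, and combine with the observation above that $\a$ preserves each. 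For the chamber statements: an open chamber $c \subset \Sigma$ is a connected component of the complement of all mirrors, and since the covering is a local isometry and $\langle\tilde\W,\a\rangle$ acts on the components of the complement of all lifted mirrors, each $c$ lifts isometrically to a component $\tilde c$ for $\tilde\W$ — here one must check that the full complement of lifted mirrors in $\tilde\Sigma$ equals the preimage of the complement of mirrors in $\Sigma$, which holds because $\Fix(\tilde\r)=\tilde\Lambda$ projects onto $\Lambda$ and $\a\tilde\r$ contributes only isolated fixed points lying on some $\tilde\Lambda$ (by convexity, as in \pref{dif-sec}). The remaining assertions about $\tilde C$ — convexity, at most $k+1$ faces, the $k$-simplex case, the join-with-$\Fix(\tilde\W)$ case — are then \emph{literally} \lref{w-sphere} applied to the pair $(\tilde\Sigma,\tilde\W)$, and the identity $C = \tilde C/\langle\a\rangle$ comes from the fact that $\a$ maps the chamber $\tilde c$ to itself (it commutes with $\tilde\W$ which acts simply transitively on open chambers, and is fixed-point-free so cannot swap a chamber with a distinct one without... — see below) and acts on $\tilde C$ only through identifications on its boundary.

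Assembling the final displayed chain of equalities $\Sigma/\W = \tilde\Sigma/\langle\tilde\W,\a\rangle = (\tilde\Sigma/\langle\tilde\W\rangle)/\langle\a\rangle = \tilde C/\langle\a\rangle$: the first equality is the second bullet together with the general fact that for a covering $\tilde\Sigma \to \Sigma$ with deck group $\langle\a\rangle$ and a lifted group action, the orbit spaces agree $\Sigma/\W = \tilde\Sigma/\langle\tilde\W,\a\rangle$; the second is the standard identification of iterated quotients for the normal subgroup $\langle\tilde\W\rangle \trianglelefteq \langle\tilde\W,\a\rangle$; the third uses $\tilde\Sigma/\langle\tilde\W\rangle = \tilde C$ from \lref{w-sphere} together with tracking the residual $\a$-action through that homeomorphism. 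The main obstacle I anticipate is the bookkeeping in the claim that $\a$ \emph{preserves} the open chamber $\tilde c$ rather than permuting distinct chambers: one argues that $\a$ centralizes $\tilde\W$, and $\tilde\W$ is simply transitive on open chambers (\lref{w-sphere}, or \tref{1-connected}), so $\a$ induces a permutation of chambers commuting with a simply transitive action, hence is given by right multiplication by a fixed $w_0 \in \tilde\W$; but $\a$ has order $2$ and is central, forcing $w_0$ central of order dividing $2$, and a central element of a finite Coxeter group acting on $\Sph^k$ that moves some chamber would be $-\mathrm{id}$ — however $-\mathrm{id}$ need not lie in $\tilde\W$, and when $\a$ does move the chamber this is precisely the case $\a\notin\tilde\W$ with $\a$ acting as the central rotation, which still fixes $\tilde c$ setwise only up to passing to $\tilde C$... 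This is exactly the phenomenon in \eref{trivial examples}(3), and the careful statement is that $\a$ maps $\tilde C$ to a chamber $\tilde C'$ with the same boundary structure so that $C$ is recovered as $\tilde C/\langle\a\rangle$ after all; making this precise — essentially that $\a(\tilde C) = \tilde C$ as the chamber through the orbit of an interior point, using that any chamber is a fundamental domain — is the one place real care is needed, and I would handle it by noting $\tilde c \cup \a(\tilde c)$ must project to the single connected set $c$, forcing $\a(\tilde c)$ to be the unique lift of $c$ meeting $\tilde c$, hence $\a(\tilde c) = \tilde c$.
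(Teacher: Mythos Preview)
Your overall strategy --- lift to $\tilde\Sigma$ and invoke \lref{w-sphere} for $\tilde\W$ --- is exactly what the paper intends (the paper gives no separate proof, saying only that the lemma follows by ``combining this with the previous lemma''). Most of your bullets go through as written.

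However, there is a genuine error in the step you flagged as delicate. Your conclusion $\a(\tilde c) = \tilde c$ is \emph{false}. In fact $\a$ acts \emph{freely} on the set of open chambers for $\tilde\W$: any lifted mirror $\tilde\Lambda$ separates $\tilde\Sigma$ into two half-spheres that $\a$ interchanges, so $\a$ cannot fix any open chamber. (The paper states this explicitly in the proof of \pref{chamber group}, and \eref{trivial examples}(3) is already a counterexample to your claim: the six lune chambers on $\Sph^2$ are permuted without fixed chambers by the antipodal map.) Your argument that ``$\tilde c \cup \a(\tilde c)$ projects to the connected set $c$, forcing $\a(\tilde c)$ to meet $\tilde c$'' fails because two disjoint open lifts of a connected base is exactly what a double cover of a simply connected set looks like.

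The correct argument is simpler and does not need $\a$ to preserve $\tilde c$. Since $c$ is contractible (by the convexity/soul argument), its preimage $\pi^{-1}(c)$ splits as two disjoint open chambers $\tilde c$ and $\a(\tilde c)$, each mapped isometrically onto $c$ by $\pi$. Passing to closures, $\pi|_{\tilde C}\colon \tilde C \to C$ is surjective, injective on the interior (since $\tilde c \cap \a(\tilde c)=\emptyset$), and identifies two boundary points exactly when they form an $\a$-orbit, i.e.\ when both $x$ and $\a(x)$ lie in $\tilde C \cap \a(\tilde C) \subset \partial\tilde C$. This is the content of ``$C$ is obtained from $\tilde C$ by identifying $\a$ orbits in the boundary.'' For the final chain, the symbol $\tilde C/\langle\a\rangle$ should be read as the quotient of $\tilde\Sigma/\tilde\W \cong \tilde C$ by the \emph{induced} $\a$-action (well-defined because $\a$ centralizes $\tilde\W$), not the literal restriction of $\a$ to the subset $\tilde C$.
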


 \medskip

 We assume from now on that $M$ is a positively curved simply connected polar $\G$ manifold, with $\G$ connected.

  The following is proved more generally for singular polar foliations in  \cite{AT}, Theorem 1.5 and \cite{Al}, Theorem 1.1:

\begin{thm}[Alexandrino and T\"{o}ben]\label{ata}
Any nontrivial polar action on a simply connected manifold has no
exceptional orbits and its reflection group $\W$ is the whole polar
group $\Pi$.
\end{thm}

 In the case of polar actions it was also recently proved in \cite{GZ} that in addition the chamber group is trivial. For the sake of the reader we provide a simple direct proof in the case of positive curvature. In fact, the following is pivotal for us:

  \begin{prop}[Chamber Group]\label{chamber group}
  The chamber group $\W_c$ of  a simply connected positively curved polar $\G$ manifold $M$ is trivial, and hence $M^* = \Sigma^* = C$. Moreover,

   \begin{itemize}
 \item
 If $\Sigma$ is a sphere, $C$ is a simplex and the fixed point set $\Sigma^{\W} = \emptyset$,  or $C$ is a join of $\Sigma^{\W}$ and a simplex.
 \item
 If $\Sigma$ is a projective space, $C$ is a simplex, $\a \in \tilde \W$ and  $\Sigma^{\W}$ is a subset of the vertices (possibly empty).
  \end{itemize}

  In either case,
  $\W$ acts simply transitive on the set of chambers.
 \end{prop}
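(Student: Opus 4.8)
The plan is to first dispose of the chamber group and then read off the structural statements from Lemmas~\ref{w-sphere} and~\ref{w-proj}. By Proposition~\ref{dif-sec} the section $\Sigma$ is diffeomorphic either to $\Sph^k$ or to $\RP^k$, and since $M$ is simply connected Theorem~\ref{ata} shows there are no exceptional orbits, so $\Pi=\W$ and, as recorded in Section~1, the chamber group $\W_c$ acts \emph{freely} on the open chamber $c$. Note that once $\W_c=1$ is proved, the chain $M^*=\Sigma^*=\Sigma/\W=C/\W_c=C$ and the simple transitivity of $\W$ on the set of chambers follow immediately from the facts collected in Section~1 (a reflection group is automatically transitive on its chambers).

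\emph{Triviality of $\W_c$.} The group $\W_c$ is finite, being a subgroup of the finite polar group $\Pi$ of Proposition~\ref{dif-sec}. When $\Sigma=\Sph^k$, Lemma~\ref{w-sphere} says the closed chamber $C$ is geodesically convex, so the open chamber is $c=\operatorname{int}C$; when $\Sigma=\RP^k$, passing to the universal cover $\tilde\Sigma=\Sph^k$, Lemma~\ref{w-proj} identifies $c$ isometrically with the interior of the convex chamber $\tilde C$ of $\tilde\W$. In either case $c$ is homeomorphic to $\R^k$, in particular contractible. A nontrivial finite group cannot act freely on a contractible finite-dimensional manifold: the orbit space $c/\W_c$ would be a $k$-dimensional $K(\W_c,1)$, forcing $\W_c$ to have finite cohomological dimension, which is impossible for a nontrivial finite group. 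Hence $\W_c=1$. (In the spherical case one may alternatively just quote Theorem~\ref{1-connected}.)

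\emph{Shape of $C$.} When $\Sigma=\Sph^k$, the remaining assertions are precisely the content of Lemma~\ref{w-sphere}. When $\Sigma=\RP^k$ I would work in $\tilde\Sigma=\Sph^k$ with $\W=\langle\tilde\W,\a\rangle/\langle\a\rangle$ as in Lemma~\ref{w-proj}, and first show $\a\in\tilde\W$ by a counting argument. Since $\a$ commutes with $\tilde\W$, the preimage of each mirror of $\W$ is a single mirror of $\tilde\W$ and conversely, so $\tilde\Sigma\setminus(\text{mirrors})$ is the full preimage of $\Sigma\setminus(\text{mirrors})$; as each open chamber $c$ of $\Sigma$ is contractible, hence simply connected, its preimage is a trivial double cover, so $c$ is covered by exactly two chambers of $\tilde\W$. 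Therefore the number of chambers of $\Sigma$ is half the number of chambers of $\tilde\Sigma$, which by Theorem~\ref{1-connected} equals $\tfrac12|\tilde\W|$. Comparing with $|\W|=\#\{\text{chambers of }\Sigma\}$ (simple transitivity of $\W$) and $|\W|=|\langle\tilde\W,\a\rangle|/2$ gives $\langle\tilde\W,\a\rangle=\tilde\W$, i.e.\ $\a\in\tilde\W$. Since $\a$ acts without fixed points while it fixes $\Fix(\tilde\W)$ pointwise, $\Fix(\tilde\W)=\emptyset$, and then Lemma~\ref{w-sphere} (applied to $\tilde\W$ on $\tilde\Sigma$) forces $\tilde C$ — hence the isometric chamber $C$ — to be a $k$-simplex.

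\emph{$\Fix(\W)$ is contained in the vertex set.} A point of $\Fix(\W)$ lying in the open chamber $c$ would be fixed by every reflection of $\W$, putting every reflection into $\W_c=1$ and contradicting $\W\neq1$ (Lemma~\ref{sing}); hence $\Fix(\W)\subset\partial C$. If some $\bar p\in\Fix(\W)$ lay in the relative interior of a face $G$ of the simplex $C$ with $\dim G\ge1$, there would be at least two facets $F_i$ of $C$ not containing $G$, and for each such $i$ the point $\bar p$ would be fixed by $\r_i$ yet would not lie on the mirror $\Lambda_i$; by the description $\Fix(\r_i)=\Lambda_i\cup\{s_i\}$ for a projective section (Proposition~\ref{dif-sec}) this gives $\bar p=s_i$. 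Thus two reflections $\r_i,\r_j$ with intersecting mirrors would share a soul point $\bar p$; but lifting a point $p$ over $\bar p$ to $\tilde\Sigma$ we would have $\tilde\r_i(p)=\tilde\r_j(p)=\a p$ (as $p$ lies on neither $\tilde\Lambda_i$ nor $\tilde\Lambda_j$), so $p$ would be fixed by the rotation $\tilde\r_i\tilde\r_j$ and hence would lie in $\Fix(\tilde\r_i\tilde\r_j)=\tilde\Lambda_i\cap\tilde\Lambda_j\subset\tilde\Lambda_i$, contradicting $\bar p\notin\Lambda_i$. Therefore $\dim G=0$ and $\bar p$ is a vertex. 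The step I expect to cost the most care is this projective case: one has to keep track of which of the two lifts of each reflection is used, and the decisive new input — that two distinct reflections with intersecting mirrors have distinct soul points — rests on the explicit form of $\Fix(\r)$ from Proposition~\ref{dif-sec} together with the way the two lifts interact with the deck transformation $\a$; by contrast $\W_c=1$ is comparatively soft once Lemmas~\ref{w-sphere} and~\ref{w-proj} provide that the open chamber is a topological cell.
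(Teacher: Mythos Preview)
Your proof is correct and in parts cleaner than the paper's, though the routes differ.  For $\W_c=1$ the paper uses a soul-point argument: the locally convex exhaustion $C^\epsilon$ of $c$ has a unique soul, which $\W_c$ must fix, contradicting that $\W_c$ acts freely on $c$.  Your cohomological-dimension argument is a valid topological substitute.  In the projective case the paper first shows $\Fix(\tilde\W)=\emptyset$ and then $\a\in\tilde\W$, both times by arguing that otherwise the induced involution of $\a$ on $\tilde C$ would produce interior metric singularities in $C=\tilde C/\langle\a\rangle=\Sigma^*$; your chamber-counting argument (each simply connected open $\W$-chamber is evenly double-covered, so $|\tilde\W|=2|\W|=|\langle\tilde\W,\a\rangle|$) reverses the logical order and is arguably more transparent.

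One step needs tightening.  The paper does not spell out why $\Fix(\W)$ is contained in the vertex set; your hands-on argument fills this gap, but the assertion ``$\Fix(\tilde\r_i\tilde\r_j)=\tilde\Lambda_i\cap\tilde\Lambda_j$'' is not justified, and before the linearisation of Theorem~\ref{coxeter section} it is not obvious that this rotation has no stray fixed components on the merely diffeomorphic sphere $\tilde\Sigma$.  The fix is immediate once you remember that the lift $p$ can be taken in $\tilde C$: by the standard theory of reflection groups on simply connected manifolds (Theorem~\ref{1-connected} and \cite{D1}), the stabiliser $\tilde\W_p$ is the standard parabolic $\tilde\W_J$ with $J=\{k:p\in\tilde F_k\}$, and your computation gives $\tilde\r_i\tilde\r_j\in\tilde\W_p$ with $i,j\notin J$; then $\tilde\r_i\tilde\r_j\in\tilde\W_{\{i,j\}}\cap\tilde\W_J=\tilde\W_{\emptyset}=\{1\}$, the desired contradiction.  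Equivalently, $[\tilde\W:\tilde\W_p]=2$ forces $\tilde\W_p$ to be an index-two standard parabolic, hence $J=I\setminus\{i_0\}$ for an isolated node $i_0$ and $p$ is the vertex opposite $\tilde F_{i_0}$.
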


 \begin{proof}
Consider an open chamber $c$ and $\W_c$ acting on it. Note that
whether or not $\Sigma$ is a sphere or a projective space, $c$ is
the union of compact closed locally convex subsets $C^{\epsilon}$
(distance $\epsilon$ or more to $C - c$). By convexity it is clear
that the soul point (the common soul point $s$ for all
$C^{\epsilon}$) is fixed by $\W_c$ (one can also use the description
of $c$ from the lemmas above). Since there are no exceptional orbits
when $M$ is simply connected (cf. \ref{ata}) this already is
impossible unless $\W_c$ is trivial. From section one we then know
that $M^* = \Sigma^*$ is the closure $C$ of a chamber $c$. If
$\Sigma$ is a sphere, Lemma \ref{w-sphere} completes the proof.

Now suppose $\Sigma$ is a projective space:

\no First note that $\a$ acts freely on the set of open chambers for $\tilde \W$. This follows from the simple fact that $\a$ interchanges the two connected components of the complement of any lifted mirror, and commutes with $\tilde \W$.

\no  We now claim that $\Sigma^{\tilde \W} = \emptyset$ and hence
$\tilde C$ (defined in \ref{w-proj}) is a simplex. Indeed, if $\Sigma^{\tilde \W}$ is nonempty
then clearly $\a \notin \tilde \W$. Moreover, the involution induced
by $\a$  on  $\tilde \Sigma/ \tilde \W =  \tilde C$ acts freely on
$\Sigma^{\tilde \W}$ and preserves the boundary of $ \tilde C$. In
particular, $ \tilde C/\langle \a \rangle$ will have interior metric
singular points contradicting that it is $C$ by Lemma \ref{w-proj}.

\no To complete the proof we now claim that $\a \in \tilde \W$, and
in particular $C = \Sigma/\W = \tilde \Sigma/\tilde \W =  \tilde C$.
If not, then $|\W| = |\tilde \W|$ and $\a$ induces a nontrivial
involution on $ \tilde C$ with  $C = \tilde C/ \langle \a \rangle$.
Such an involution will preserve the boundary of the simplex $\tilde
C$ taking faces to faces. As before this will produce an interior
metric singular point of $C$ unless the induced map by $\a$ is a
reflection of the simplex. This, however, cannot happen since the
fixed point set of this involution would correspond to a chamber
face of $C$ and hence a reflection in $\W$ whose lift to $\tilde
\Sigma$ had been omitted from $\tilde \W$.
\end{proof}

\begin{rem}\label{section fix points}
Note that it follows from this that if  $M^{\G}\ne \emptyset $ and
$\Sigma$ is a sphere then $M^{\G} = \Sigma^{\W}$, since
$\Sigma^{\W}$ is the most singular stratum in the orbit space
$\Sigma/\W = M/\G$. In the next section we will see that conversely,
if $\Sigma^{\W} \ne  \emptyset$ and $\Sigma$ is a sphere then
$M^{\G}  \ne \emptyset $ as well and hence $M^{\G} = \Sigma^{\W}$
(cf. \eqref{fix}).
\end{rem}

The Coxeter group $\W$, respectively $\tilde \W$ corresponding to the section being a sphere, respectively a projective space admits a canonical representation, i.e., acts isometrically on the unit $k$-sphere $\Sph^k$ with orbit space $C'$ having the same infinitesimal singularities (i.e., tangent cones) as $C$. As a consequence we have:

\begin{cor}
Let $M$ be a simply connected positively curved polar $\G$ manifold. Then $M^* = \Sigma^* = C$  admits a metric of constant curvature isometric to its linear model $C'$.
\end{cor}

 \begin{proof}
 From Proposition \ref{chamber group}, we know that $\Sigma^*$ is the chamber $C$ for a Coxeter group $\W$ (resp. $\tilde \W$) acting on the $k$-sphere $\Sigma$ (resp. $\tilde \Sigma$, when $\Sigma$ is a projective space). Moreover, as stated above the same Coxeter group acts linearly on $\Sph^k$, with chambers $C'$ having labels as $C$ and with the same  tangent cones, determined by
the corresponding isotropy groups and actions.

We only consider the case that $\Sigma$ is a sphere since the other case is analogous.
 Now fix a chamber $C$ with $\ell+1$ chamber faces, $F_0, \ldots, F_{\ell}$ in $\Sigma$ and the corresponding model chamber  $C' \subset \Sph^k$. As in the proof of Lemma \ref{w-sphere}, let $s_0$ be the point in $C$ at maximal distance to the face $F_0$. Now apply the isotropy group $\W_{s_0}$ of the Coxeter group at $s_o$ to $C$ to obtain a $\W_{s_0}$ invariant convex subset  $\W_{s_0}(C)$ of $\Sigma$ with $s_0$ in the interior, the point at maximal distance from the boundary $\partial (\W_{s_0}(C)) =  \W_{s_0}(F_0)$ of  $\W_{s_0}(C)$. As in the proof of Lemma \ref{w-sphere} it follows that there is a $\W_{s_0}$ invariant  smooth vector field on  an open neighborhood of $\W_{s_0}(C)$ in $\Sigma$, which is radial near $s_0$ and gradient like on $\partial (\W_{s_0}(C))$.

 The same construction based on $C'$ in $\Sph^k$ yields a $\W_{s_0}$ invariant diffeomorphism of a neighborhood
of $\W_{s_0}(C)$ in $\Sigma$ to a neighborhood of $\W_{s_0}(C')$ in
$\Sph^k$. After a suitable reparametrization of one of the vector
fields using transversality if needed, the restriction yields the
desired diffeomorphism from $C$ to $C'$.
 \end{proof}

We are now ready to establish the main result of this section.

\begin{thm}[Coxeter Section]\label{coxeter section}
Let $M$ be a simply connected positively curved polar manifold. Then
the action of the polar group $\W$ of a section $\Sigma$ is
equivariantly diffeomorphic to a linear action of  $\W$. In fact,
$\Sigma$ admits a $\W$ invariant metric of constant curvature.
\end{thm}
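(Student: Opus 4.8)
The plan is to construct a $\W$-equivariant diffeomorphism of $\Sigma$ onto a round sphere (or projective space) that carries $\W$ to a linear reflection group, and then simply to pull back the constant-curvature metric; this delivers both assertions at once. Assume first that $\Sigma=\Sph^k$. By $\tref{1-connected}$ and $\pref{chamber group}$, $(\W,S)$ is a finite Coxeter system, $\W$ acts simply transitively on the chambers of $\Sigma$, and the closed chamber $C$ is a spherical simplex (with $\Fix(\W)=\emptyset$), or the join of $\Fix(\W)$ with a spherical simplex. Let $\W_0$ be the reflection group of the same Coxeter type acting linearly on $\Sph^k$, with standard chamber $C_0$ of the corresponding shape. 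By $\pref{chamber group}$ and $\rref{finite cox}$, the face posets of $C$ and $C_0$, their labellings of codimension-one faces by the generators $r_i$, and their dihedral angles $\pi/m_{ij}$ all agree; fix the resulting isomorphism $\W\to\W_0$, $r_i\mapsto r_i^0$. It carries the stabilizer $\W_F=\langle r_i: F\subset\Lambda_i\rangle$ of a face $F$ of $C$ onto the stabilizer $\W^0_{F_0}$ of the corresponding face $F_0$ of $C_0$, and both of these subgroups fix their faces pointwise.

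By the corollary preceding this theorem there is a label-preserving diffeomorphism $\psi\colon C\to C_0$. Since $\psi$ is assembled, as in the proof of that corollary (which in turn refines the proof of $\lref{w-sphere}$), from gradient-like vector fields that are radial near the successive soul points and gradient-like along the remaining faces, run inductively on the codimension of the faces, it may be taken \emph{adapted to the boundary}: in the normal-exponential collar $F\times\hat c$ of each open face $F$, where $\hat c$ is an open chamber for the slice reflection group $\W_F$ acting on the normal sphere, $\psi$ respects the product structure, restricting over the slice to an adapted diffeomorphism $\hat c\to\hat c_0$. Now set
$$\Psi|_{wC}=w_0\circ\psi\circ w^{-1}\colon wC\longrightarrow w_0C_0\qquad(w\in\W).$$
Because $\W$ acts simply transitively on chambers, the chambers containing a fixed open face $F$ are precisely the $wC$ with $w\in\W_F$; as $\W_F$ fixes $F$ pointwise and $w_0\in\W^0_{F_0}$ fixes $\psi(F)\subset F_0$ pointwise, the several definitions agree on overlaps. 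Hence $\Psi\colon\Sigma\to\Sph^k$ is a well-defined, continuous, $\W$-equivariant bijection taking each $wC$ onto $w_0C_0$.

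That $\Psi$ is actually a diffeomorphism is where adaptedness is used: across a codimension-one face the neighboring pieces $\psi$ and $r_i^0\circ\psi\circ r_i$ agree to infinite order in the transverse direction, since $r_i$ and $r_i^0$ both act there as $t\mapsto-t$ and $\psi$ is adapted, and an induction on the codimension of the faces — using the product form of $\psi$ in each collar together with the smoothness of the unfolding of the slice map $\hat\psi$ already obtained one dimension down — shows $\Psi$ and its inverse are smooth. Pulling back the round metric of curvature $1$ by $\Psi$ then yields a $\W$-invariant metric of constant curvature on $\Sigma$, and $\Psi$ exhibits the $\W$-action as differentiably, indeed isometrically, equivalent to the linear action of $\W_0$. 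When $\Sigma$ is a projective space we run the same argument on the universal cover $\tilde\Sigma=\Sph^k$ with the reflection group $\tilde\W$, which by $\pref{chamber group}$ contains the deck transformation $\a$ and has simplicial chamber; the resulting $\tilde\W$-equivariant diffeomorphism $\tilde\Psi\colon\tilde\Sigma\to\Sph^k$ intertwines $\a$ with the corresponding element of $\W_0$, which by the proof of $\pref{dif-sec}$ is the antipodal map $-\id$, so $\tilde\Psi$ descends to a $\W$-equivariant diffeomorphism $\Sigma\to\Sph^k/\langle-\id\rangle=\RP^k$, and the round metric of $\RP^k$ pulls back as required.

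The hard part will be arranging the adaptedness of $\psi$ to \emph{every} face of $C$ at once — choosing the straightening diffeomorphism of the chamber compatibly with the product structure transverse to each face, so that the equivariant unfolding $\Psi$ is genuinely smooth and not merely continuous. Everything else is bookkeeping with the Coxeter data already supplied by $\pref{chamber group}$ and the preceding corollary.
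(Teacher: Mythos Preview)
Your approach is correct in outline but takes a genuinely different route from the paper. The paper does not construct a global equivariant diffeomorphism $\Psi$ by reflecting a chamber map; instead it works at the level of metrics. Having placed a constant-curvature metric on the orbit space $\Sigma^* = C$ with the correct infinitesimal singularities (the preceding corollary), the paper simply \emph{lifts} this metric locally via the orbit map $\Sigma \to \Sigma/\W$: near any $q \in \Sigma$ the quotient is modeled on $\R^k/\W_q$ for the finite reflection group $\W_q$, and a metric on such a quotient with the right dihedral angles lifts uniquely to a $\W_q$-invariant smooth metric. These local lifts agree on overlaps by uniqueness, giving a $\W$-invariant constant-curvature metric on $\Sigma$ directly.

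The advantage of the paper's route is precisely that it bypasses the step you yourself flag as ``the hard part'': no global $\psi$ adapted to \emph{every} face of $C$ is needed, because metrics are local tensor fields and the lifting is automatic once the quotient metric has the right singularities. Your approach, by contrast, requires genuinely arranging that $\psi$ respects collar product structures along all faces simultaneously and then checking smoothness of the unfolded $\Psi$ across every stratum---doable, but the corollary's proof only builds $\psi$ from a $\W_{s_0}$-invariant gradient-like field near one vertex and does not obviously deliver adaptedness along the opposite face $F_0$, so your sketch of this step would need more work to stand on its own. What your approach buys is an explicit equivariant diffeomorphism rather than just an invariant metric; the paper's approach buys brevity and avoids the seam-smoothness issue entirely.
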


\begin{proof}
Choose a constant curvature metric on  $\Sigma^*$ as above. We now
claim that this metric comes from a $\W$ invariant metric on
$\Sigma$ with constant curvature. To see this, all we have to do is
to lift the metric locally near any point of the orbit space to any
point mapping to it by the orbit map. This however is clear. Since
the lifted metrics obtained this way agree on overlaps we are done.
\end{proof}

\begin{rem}
We point out that our conclusions about the section  in the theorem above carry over to the general context of a positively curved manifold with a nontrivial isometric reflection group action. The manifold together with the action of the reflection group is then equivariantly diffeomorphic to a sphere or a real projective space with a linear action by a finite Coxeter group ($\Z_2$ ineffective in the latter case).
\end{rem}

The following is now natural

\begin{definition}
We say that a simply connected positively curved polar $\G$ manifold
$M$ is \emph{reducible} if  the action by the Coxeter group $\W$ on $\Sigma$ is reducible.
\end{definition}

  In
particular it follows that $\W$, or $\tilde \W$ is an irreducible
Coxeter system group
 when ($M,\G$) is irreducible, but Example \ref{trivial examples}(3)  implies that the converse is false.
 Also an action with a nontrivial fixed point set is reducible.
 In the case of irreducible actions all the types $\A_n, \CC_n, \D_n, \E_6, \E_7, \E_8$ and $\F_4$ are of course possible when the section is a sphere, but we note that due to the Chamber Group Proposition above, not all of them are possible when the section is a projective space.

We remark that in the literature the notion \emph{hyperpolar} is
used for a polar manifold with flat sections. Following \cite{GZ}, we
say that a polar manifold is a \emph{polar space form} if its
sections have constant curvature. According to the sign of the curvature
of the sections one then says that the polar space form has
spherical, euclidean or hyperbolic type. Using this language, a
partition of unity argument as in \cite{GZ} Thm. 3.3, or the main result of \cite{Me} now yields the
following in our case

\begin{cor}[Polar Space Form]\label{polar space form}
A simply connected positively curved polar $\G$ manifold $M$ admits the structure of a
polar spherical space form with the same sections.
\end{cor}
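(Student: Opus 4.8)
The plan is to spread the constant-curvature section metric furnished by Theorem~\ref{coxeter section} over all of $M$ by a $\G$-invariant partition of unity subordinate to a cover of the orbit space. By Theorem~\ref{coxeter section} together with the Corollary preceding it, the section $\Sigma$ carries a $\W$-invariant metric $\bar g_\Sigma$ of constant curvature $1$ which, crucially, has \emph{the same infinitesimal metric singularities} as the metric induced by the given polar metric: by Theorem~\ref{ata} the polar group coincides with the reflection group $\W$, the mirrors and their mutual angles are unchanged, and at each $q\in\Sigma$ the identification of $T_q\Sigma$ with a section of the slice representation of $\G_{\sigma(q)}$, and of $\Pi_q$ with the associated polar group, persists. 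In other words, $\bar g_\Sigma$ is \emph{compatible} with the isotropy data of the action along $\Sigma$.

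Next I would invoke the reconstruction of a polar manifold from its section data (\cite{GZ}): the $\G$-space $M$, equipped with a polar metric having $\sigma(\Sigma)$ as a section, is determined by $\Sigma$ (with its metric and the $\Pi$-action), the actions of $\Pi$ on $\Sigma$ and on $\G/\H$, and the $\G$-isotropy groups along $\Sigma$, the only constraint being the compatibility of the section metric with those isotropy representations at the boundary strata. Keeping everything fixed except the metric on $\Sigma$, and replacing the latter by $\bar g_\Sigma$ --- which is legitimate precisely because of the compatibility established in the previous paragraph --- one obtains a new $\G$-invariant polar metric $\bar g$ on the \emph{same} smooth $\G$-manifold $M$, with the \emph{same} section immersion $\sigma$, for which $(\sigma(\Sigma),\bar g)$ has constant curvature. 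Thus $(M,\bar g)$ is a polar spherical space form with the same sections, which is the assertion.

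Concretely, the metric $\bar g$ is built by the promised partition-of-unity argument. One covers $M^{*}=\Sigma^{*}$ by open sets over each of which a local $\G$-invariant polar metric realizing $\bar g_\Sigma$ on $\sigma(\Sigma)$ is available: over the principal part this is immediate, since there the normal space to $\sigma(\Sigma)$ is exactly the orbit tangent space, while near a boundary stratum such a metric is produced from the (unchanged) slice representation via the equivariant tubular neighborhood theorem. Averaging the pullbacks of these local models with a $\G$-invariant partition of unity subordinate to the pulled-back cover yields a global $\G$-invariant metric which is still polar with $\sigma(\Sigma)$ a totally geodesic section inducing $\bar g_\Sigma$, since all of these properties are retained under convex combinations of metrics that already restrict to the fixed metric $\bar g_\Sigma$ on $\sigma(\Sigma)$ with the orbit directions orthogonal. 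The only substantive point --- and the place where positive curvature (through Theorem~\ref{coxeter section}) actually enters --- is the verification that $\bar g_\Sigma$ is compatible with the slice representations along $\partial(\Sigma^{*})$, i.e.\ that the local models near the singular strata exist; but this is exactly what the Corollary preceding Theorem~\ref{coxeter section} secures, so the argument goes through with no further difficulty.
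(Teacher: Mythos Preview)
Your proposal is correct and follows essentially the same approach as the paper: the paper's entire argument is the single phrase ``a simple partition of unity argument now immediately yields the following,'' and you have supplied exactly those details --- local $\G$-invariant models near each stratum realizing the constant-curvature section metric (available because the infinitesimal singularities are unchanged), glued by a $\G$-invariant partition of unity pulled back from $M^*$. Your observation that orthogonality of $\Sigma$ to the orbits and the induced metric on $\Sigma$ survive convex combinations is the point that makes the gluing work, and is precisely what the paper leaves implicit.
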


It should be noted that $M$ with such a polar space form structure
typically has curvatures of both signs. In general, a highly
nontrivial result of \cite{Me} asserts that \emph{any metric} on a section
of any polar $\G$ manifold \emph{invariant under the polar group extends}
to a $\G$ invariant metric on the ambient manifold with the \emph{same
section}.

\bigskip

\section{The Chamber System and Primitivity}\label{chambersystem}   

Based on the Chamber Group Proposition \ref{chamber group}, recall
from Section \ref{sectionone} that there are two natural chamber systems
$\mathscr{C}(\Sigma, \W)$, respectively $\mathscr{C}(M, \G)$
associated with any polar action of a connected compact Lie group
$\G$ on a simply connected positively curved manifold $M$ with
section $\Sigma$ and polar group $\W$. Throughout the rest of the
paper $(M,\G)$ is such a polar pair.
\smallskip

Our primary purpose in this section is to analyze $\mathscr{C}(M,
\G)$ further and thereby derive essential properties about such
general actions. In particular, we will show that it is a
connected chamber system (the crucial starting point for our
subsequent investigation of irreducible actions), and use this to
show that $\G$ is generated by the face isotropy groups of any fixed
chamber $C \subset \Sigma$ (an essential ingredient in our
investigation of reducible actions).

When the chambers are spherical simplices, we observe that all proper residues of the chamber system can be described via slice representations of corresponding isotropy groups. This allows us to invoke a celebrated result of Tits \cite{Ti2}
implying that the so-called universal cover of our chamber system is
a building in most cases.

\bigskip
From the description $\mathscr{C}(M, \G) =  \cup_{g\in \G} g C$ of
the chamber system we first note that all chambers are isometric
when equipped with the induced length space metric from $M$. This
induces a natural length space metric on each path connected
component of $\mathscr{C}(M, \G)$.  A fundamental Theorem due to
Wilking \cite{Wi3} asserts in particular that the \emph{dual
foliation} associated to the orbits of an isometric group action on
a positively curved manifold has only one leaf. It is an immediate
consequence of this result that

\begin{itemize}
\item
$\mathscr{C}(M, \G)$ has only one component. \end{itemize}

There is an equivalent length metric on $\mathscr{C}(M, \G)$
obtained by using a polar space form metric on $M$ (cf. \ref{polar
space form}) in the construction above. We will refer to the
corresponding topology as the \emph{thin} topology on
$\mathscr{C}(M, \G)$. (Since $M$ is the union of its chambers, we
can also think of it as $M$ being equipped with this metric and
topology.)

From now on, we will always use the thin length metric on
$\mathscr{C}(M, \G)$ induced from a constant curvature one metric on
a section. In particular, note that then each chamber $C$ is either
a (spherical) $k$ - simplex $\Delta^k$, or else the spherical join
$\Sph^{k- \ell - 1} * \Delta^{\ell}$ of the $(k- \ell - 1)$-sphere
and a spherical $\ell$ - simplex. In either case, the chambers in a
fixed section $\Sigma$ tile the section, which is either $\RP^k$ or
$\Sph^k$.  Moreover, by construction, $\G$ preserves the
\emph{labeling} of all ``vertices, edges, \ldots, faces", i.e., of
all $0$-, $1$-, \dots, $(k-1)$-simplices, when $C = \Delta^k$ is a
simplex. In the special case where the chamber is not a simplex,
i.e., $C = \Sph^{k-\ell-1} * \Delta^{\ell}$, by a ``vertex", or
``0-simplex" of the chamber $C$ we mean a set of the type
$\Sph^{k-\ell-1} * \{v\}$, where $v$ is a vertex of the simplex
$\Delta^{\ell}$, and similarly for ``edges", \ldots, ``faces". We label
the set $\Sph^{k-\ell-1} \subset C$ as the $-1$-simplex of the
chamber $C$. In either case we note that the intersection of any two
chambers in $M$ is either empty or else a common ``subsimplex" in
this sense, allowing in particular the intersection to be a
``$-1$-simplex".

From the fact that $\mathscr{C}(M, \G)$ with the thin topology is
connected, we get the essential property:

\begin{thm}[Connectivity]\label{complex}
Assume $M$ is a simply connected positively curved polar $\G$ manifold.
Then the associated chamber system $\mathscr{C}(M;\G)$ is connected,
i.e., any two chambers are connected by a gallery.
\end{thm}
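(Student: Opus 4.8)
The plan is to deduce this from the topological statement already recorded above---that $\mathscr{C}(M,\G)$, equipped with the thin length metric, has a single (path-)component---which was obtained as an immediate consequence of Wilking's dual foliation theorem \cite{Wi3}. Thus the remaining point is purely combinatorial: one must show that path-connectedness in the thin topology is equivalent to gallery-connectedness. One direction is trivial, since a gallery obviously traces out a thin-continuous path; the content is the converse, and I would establish it by a general-position argument, essentially turning a path into a gallery by letting it cross the codimension-one faces transversally.

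In detail: given chambers $C$ and $C'$ and a thin-continuous path $\gamma$ from an interior point of $C$ to an interior point of $C'$, stratify the thin realization of $\mathscr{C}(M,\G)$ according to face type---the open chambers forming the codimension-zero part, the relative interiors of the mirror faces (each contained in precisely two chambers) forming the codimension-one part, and everything else forming the codimension $\ge 2$ part. The one subtlety is the status of the ``$(-1)$-simplex'' directions $\Sph^{k-\ell-1}$ occurring when a chamber is a join $\Sph^{k-\ell-1}*\Delta^{\ell}$ rather than a simplex: this locus is a copy of $\Fix(\W)=\Sigma^{\W}$, which by the remark following Proposition \ref{chamber group} equals $M^{\G}$, and $M^{\G}$ has codimension at least two in $M$---a codimension-one component would force $\G$ to fix a neighbourhood of it and hence, by analyticity of isometric actions, to act trivially on all of $M$, contrary to hypothesis, while principal orbits are positive-dimensional because $\G$ is a nontrivial connected group. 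So this locus belongs to the codimension $\ge 2$ part. Now perturb $\gamma$, keeping its endpoints fixed, to a path that avoids the codimension $\ge 2$ locus entirely and meets the codimension-one locus transversally, at finitely many parameter values $t_1<\dots<t_m$; this is legitimate because, away from its codimension $\ge 2$ skeleton, the thin realization is locally a half-space attached to a single neighbouring half-space along the common wall---a normal form one reads off from the slice theorem description of a neighbourhood of an orbit. On each of the complementary intervals the perturbed path lies in a single open chamber; the two chambers adjacent across $t_j$ share the mirror face through the crossing point, and since $\G$ preserves the labeling this face has a well-defined label $i_j$, so those two chambers are $i_j$-adjacent. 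The resulting sequence of chambers is a gallery from $C$ to $C'$.

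The main obstacle is therefore not the geometry: Wilking's theorem disposes of the global issue that $\mathscr{C}(M,\G)$ could a priori split into many pieces---an issue which is genuinely present here precisely because sections are only immersed, so the natural map from the thin realization to $M$ is far from injective and connectedness of $M$ itself gives nothing. The real work is the technical verification that the thin realization, which is truly only a chamber system and not a simplicial complex, is nonetheless locally tame enough along its codimension $\le 1$ locus for the transversality step to be carried out, together with the bookkeeping confining the join-type fixed-point directions to codimension $\ge 2$. Once these local normal forms are in hand the argument is routine; the very low cohomogeneity cases (cohomogeneity one, with section $\Sph^1$ or $\RP^1$) should be checked separately, where the chamber system is a finite cyclic chain of arcs and connectedness is evident.
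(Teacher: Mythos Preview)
Your overall strategy---Wilking gives thin path-connectedness, then convert a horizontal path to a gallery via general position---is the paper's. The execution, however, has a real gap at the perturbation step.

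First a correctable error: your local model at a codimension-one face point is wrong. The slice theorem describes a neighbourhood in $M$ as a $\G$-manifold, not the thin realization. At an interior point $p$ of an $i$-face the chambers through $p$ form the entire $i$-residue $\G_p/\H\cong\Sph^{d_i}$---a full sphere's worth, not two. The thin length space there is a $(k-1)$-dimensional spine $F_i$ with uncountably many $k$-dimensional pages attached. This alone is harmless (a crossing path singles out two specific pages, which \emph{are} $i$-adjacent), but it shows the thin topology is nowhere locally Euclidean along the singular strata, so no off-the-shelf transversality theorem applies.

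The real gap is the perturbation off the codimension-$\ge 2$ locus. If the path passes through such a point $q$, pushing it off $q$ means re-routing through the punctured cone on the link at $q$---and that link is precisely the thin realization of the slice chamber system $\mathscr{C}(\Sph^\perp_q,\G_q)$. So the re-routing succeeds exactly when \emph{that} chamber system is gallery-connected: the statement under proof, one cohomogeneity lower. You have hidden the induction inside the word ``perturb.'' The paper makes this induction explicit: it arranges the horizontal curve to break only at vertices (pushing breaks to the \emph{most} singular points, not away from them), reads the incoming and outgoing directions as chambers in the normal-sphere chamber system, and invokes the inductive hypothesis to join them by a gallery in that residue. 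The anchor is cohomogeneity one, where a horizontal curve is already a gallery. (Your description of that base case as ``a finite cyclic chain of arcs'' is $\mathscr{C}(\Sigma,\W)$, not $\mathscr{C}(M,\G)$; the latter has a full $\G/\H$'s worth of chambers.)
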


\begin{proof}
We will prove this by induction on dim$M^* = k$ using that $\mathscr
C(M;\G)$ is path connected.  For simplicity we first present the
proof in the typical case where the chamber $ C$  is a simplex
$\Delta^k$. A simple modification yields the general statement.

Let $C$ and $C'$ be two chambers of $\mathscr{C}(M;\G)$. Using
\cite{Wi3} join two interior points of $C$ and $C'$ by a piecewise
smooth \emph{horizontal curve}, i.e., at any point both one sided
derivatives of the curve are perpendicular to the $\G$ orbit at the
point. In our case, it is clear that we can choose a horizontal
curve $\gamma: [0,1] \to M$, and $0=t_0 < t_1 < t_2 \ldots <
t_{k+1}=1$ such that $\gamma_{| (t_{i}, t_{i+1})}$ is a geodesic, or
once broken geodesic in the interior of a chamber $C_i$ relative to
the thin metric on $\mathscr{C}(M;\G)$, where $C_0 = C$, $C_k = C'$
and all $C_i$ are different. Moreover, $\gamma$ can be chosen so
that each of the possibly nonsmooth points $\gamma(t_i)$, $i = 1,
\ldots k$ are all vertices. In addition, the one-sided
derivatives $\gamma'_+(t_i)$,  $-\gamma'_-(t_i)$ of $\gamma$ at the vertices $\gamma(t_i)$ are interior points of
two $(k-1)$ chamber simplices for the chamber complex
$\mathscr{C}(\Sph_{\gamma(t_i)}^{\perp};\G_{\gamma(t_i)})$ of the
slice representation of the isotropy group $\G_{\gamma(t_i)}$. By
induction these simplices can be joined by a gallery in
$\mathscr{C}(\Sph_{\gamma(t_i)}^{\perp};\G_{\gamma(t_i)})$. Filling
in the corresponding gallery in $\mathscr{C}(M;\G)$ at each
$\gamma(t_i)$ now yields a gallery from $C$ to $C'$.

To complete the proof we need to establish the induction anchor in
cohomogeneity two. By the same reasoning as above, this follows from
the claim that the chamber complex of a linear spherical
cohomogeneity one action is connected. Since any horizontal curve
provided by Wilkings theorem in this case is a piecewise horizontal
geodesic up to parametrization, such a curve already constitutes the
desired gallery.

The modification needed to cover the case where the chambers are
joins with a nonempty sphere can be explained as follows: As in the
simplex case one may choose a piecewise horizontal geodesic
$\gamma$, so that each of the possibly nonsmooth points points
$\gamma(t_i)$, $i = 1, \ldots k$ are most singular, i.e., in this
case $-1$-simplex points. The remaining part of the proof follows
the same path.
\end{proof}

The Coxeter Section Theorem \ref{coxeter section} and the
Connectivity Theorem above are the two crucial properties derived
using positive curvature. We note that there is no reason for the
chamber system of a simply connected polar space form of spherical
type to be connected. However:

\smallskip

The manifolds we actually classify in higher cohomogeneities in this
paper are the

\bigskip

\begin{center}
\emph{Chamber Connected Polar Spherical Space Form}
\end{center}

\no i.e.

$\bullet$ Simply connected polar space forms ($M,\G$) of spherical type

\no with

$\bullet$ Connected associated chamber system, $\mathscr{C}(M;\G)$

\medskip
In addition, this generality is important for the proof, because
$\G$ invariant polar submanifolds of a positively curved polar
manifold are typically not positively curved (cf. Section \ref{input}, proof
of Hopf Lemma).

\bigskip

\emph{The two assumptions above will be applied throughout the rest
of the paper.}

\smallskip

Using connectivity we derive the following simple but powerful tool:

\begin{thm}[Primitivity]\label{prim}
The group $\G$ is generated by the (identity components of the),  face isotropy groups of any fixed chamber.
\end{thm}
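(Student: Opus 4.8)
The plan is to exploit the Connectivity Theorem \ref{complex} together with the description $\mathscr{C}(M;\G) = \cup_{\g\in\G}\g C$ of the chamber system. Fix the chamber $C \subset \Sigma$, let $\H$ be the principal isotropy group at an interior point of $C$ (so chambers are identified with $\G/\H$ as a set, and $C$ itself corresponds to the coset $\H$), and for each chamber face $F_i$ of $C$ let $\K_i := \G_{F_i}$ be the isotropy group of an interior point of $F_i$ — equivalently the maximal singular isotropy group whose slice representation produces the reflection $\r_i$. Let $\G_0 := \langle \K_1,\dots,\K_s\rangle$ be the subgroup generated by these face isotropy groups; note $\H \subset \K_i \subset \G_0$ for every $i$, since $\H$ fixes all of $C$. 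The goal is to show $\G_0 = \G$.

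The key observation is that passing to an $i$-adjacent chamber is realized by an element of $\K_i$. Indeed, if $C'$ is $i$-adjacent to $C$, they share the $i$-face $F_i$; both $C$ and $C'$ sit inside (the section given by) the slice representation of $\K_i$ at an interior point $p_i \in F_i$, and under that polar slice representation $C$ and $C'$ are adjacent chambers of the linear polar $\K_i$-action on the normal sphere $\Sph^\perp_{p_i}$. Since a reflection group acts transitively on adjacent chambers across a given wall — and more to the point, since $\K_i$ acts transitively on the chambers of its own chamber system meeting $F_i$ — there is $g_i \in \K_i$ with $g_i C = C'$. Iterating along a gallery $C = C_0, C_1, \dots, C_m = \g C$ (which exists by Theorem \ref{complex}, since any two chambers are joined by a gallery), we obtain $g_{i_1}\cdots g_{i_m} \in \G_0$ carrying $C$ to $\g C$; hence $\g \in g_{i_1}\cdots g_{i_m}\,\H \subset \G_0$. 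As $\g \in \G$ was arbitrary and every chamber has the form $\g C$, this gives $\G = \G_0$.

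I would organize the write-up as: (1) set up notation and record that $\H \subset \K_i$; (2) prove the adjacency lemma — that $i$-adjacent chambers differ by an element of $\K_i$ — using that $C, C'$ are adjacent chambers of the polar slice representation of $\K_i$ and that this linear reflection group (or $\Z_2$-quotient thereof, per Theorem B) acts transitively on its chambers through the common face; (3) run the gallery induction using Theorem \ref{complex}. The main obstacle is step (2): one must be careful that the relevant transitivity is on the *ambient* chambers of $M$, not merely on the abstract chambers of the slice, and that the element witnessing adjacency can be chosen in $\K_i$ itself rather than merely in the normalizer $\N(\H)$; this is where one uses that the slice representation of $\K_i$ is polar (stated in Section 1) with polar group exactly the local isotropy group $\Pi_{p_i}$ acting as a reflection group, so that the reflection $\r_i$ — hence the chamber swap — is induced by an honest element of $\K_i$. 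A minor additional point, to be handled by the same slice argument, is the non-simplex case where chambers are joins $\Sph^{k-\ell-1}*\Delta^\ell$: there one uses the galleries supplied by Theorem \ref{complex} in exactly the same way, the faces $F_i$ now being the $(k-1)$-faces of the join, and nothing essential changes.
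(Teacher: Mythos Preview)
Your proposal is correct and follows essentially the same route as the paper: use the Connectivity Theorem to join $C$ to $\g C$ by a gallery, then realize each step of the gallery by an element of the relevant face isotropy group, and conclude that $\g$ lies in the subgroup they generate. The only point the paper handles more explicitly than your outline is the iteration in step (3): since the adjacency lemma as you state it applies only to chambers adjacent to the \emph{fixed} chamber $C$, one must pull the gallery back at each step by the accumulated product (equivalently, observe that the face isotropy at the $i_n$-face of $C_{n-1}$ is the conjugate of $\K_{i_n}$ by that product), and the paper spells out how this telescopes to give $\g = \h_{i_1}\cdots \h_{i_m}$ with each $\h_{i_j}\in \K_{i_j}$.
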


\begin{proof}
Fix a chamber $C_0$ and consider any other chamber $\g C_0$, $\g\in
\G$. Using the above, let $\Gamma = (C_0,....,C_k)$ be a gallery, of
type $i_1 i_2 \ldots i_k$, where $C_k = \g C_0$. By definition, note
that any $C_n$ is obtained from $C_{n-1}$ by applying an element
$g_{i_n}$ of the isotropy group for the common face $i_n$ of $C_n$
and $C_{n-1}$ to $C_{n-1}$, i.e., $C_n = \g_{i_n} C_{n-1}$. From this
it follows that  $C_k = \g C_0= \g_{i_k} \g_{i_{k-1}} \ldots \g_{i_1}
C_0$, and hence $\g = \g_{i_k} \g_{i_{k-1}} \ldots \g_{i_1}$ after
modifying $g_{i_1}$ with an element of the stabilizer of the chamber $C_0$
if necessary.

Now each $\g_{i_n}$ is a conjugate of an element of the isotropy group corresponding to the face $i_n$ by the previous element. So in other words $\g_{i_k} = [\g_{i_{k-1}} \ldots \g_{i_1}] h_{i_k}[\g_{i_{k-1}} \ldots \g_{i_1}]^{-1}$, and hence\\
$\g =  [\g_{i_{k-1}} \ldots \g_{i_1}] \h_{i_k}[\g_{i_{k-1}} \ldots
\g_{i_1}]^{-1} \g_{i_{k-1}} \ldots \g_{i_1} = [\g_{i_{k-1}} \ldots
\g_{i_1}] \h_{i_k}$, where  $\h_{i_k}$ is in the isotropy group with
face $i_k$ of $C_0$

Proceeding in this way we see that $\g = \h_{i_1}
\h_{i_2}.....\h_{i_k}$, where also $\h_{i_1} = \g_{i_1}$ as claimed. The claim about identity components of the face isotropy groups follows since these in fact act transitively on the normal spheres of their orbit strata (these spheres are connected).
\end{proof}

\begin{rem}\label{folding} The description of galleries used in the proof above is very useful.
In fact, a gallery starting at $C$ of type $i_1 i_2 \ldots i_k$ is
given by a word $ \h_{i_1}  \h_{i_2} \ldots \h_{i_k}$ in elements of
the isotropy groups $\G_{i_j}$ corresponding to the $i_j$-faces of
$C$. Note that each  $\G_{i_j}$ acts transitively on the normal
sphere to the corresponding orbit stratum, i.e., the $i_j$ residue
of $C$ is in one to one correspondence with this normal sphere. For
this reason we say that a gallery $\Gamma_f = (C_0,....,C_k)$ of
type $f=i_1 i_2 \ldots i_k$  is obtained from $C_0$ by
\emph{folding} it repeatedly along faces using the face isotropy
groups  $\G_{i_1}$,  $\G_{i_2}$, \dots,  $\G_{i_k}$.
\end{rem}

\begin{rem}\label{2vertex}
Note that this also immediately implies that $\G$ is \emph{generated by
any two vertex isotropy groups}.

\end{rem}

\begin{rem}\label{chamber homo}
We also observe that in complete generality, our chamber system  ${\mathscr{C}}(M;\G)$ associated to a polar $\G$ action on a simply connected  manifold $M$ is a \emph{homogeneous chamber system} of a type described in  Ronan's book \cite{Ro}. Specifically:

The
chamber system $\mathscr{C}(M;\G)$ is the left coset $\G/\H$ (the
principal orbit) with the following adjacency relation: two chambers
$g \H$ and $g'\H$ are $i$-adjacent if and only if $g \G_{
i}=g'\G_{i}$, where $\H$ is the principal isotropy group, and the
$\G_{i}$ for $i \in I$ are the face isotropy groups of a fixed chamber.

Note that  $\mathscr{C}(M;\G)$ is connected if and only if the polar $\G$ action is \emph{primitive}, i.e., by \emph{definition}: $\G$ is generated by the face isotropy groups.

\end{rem}

\smallskip

As promised we can use the above connectedness to prove the fixed
point claim from the previous section:

\begin{prop}\label{fix}
Suppose $M$ is a simply connected positively curved polar $\G$ manifold
with spherical section $\Sigma$ and polar group $\W$. Then $M^{\G} =
\Sigma^{\W}$, and in particular
$\rank(\W) = \dim \Sigma^* + 1 - \dim M^{\G}$.
\end{prop}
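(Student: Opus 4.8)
The plan is to prove the two inclusions $M^{\G}\subseteq\Sigma^{\W}$ and $\Sigma^{\W}\subseteq M^{\G}$ separately; the dimension formula will then be an immediate consequence of the join decomposition of the chamber $C$ furnished by Proposition~\ref{chamber group}. The inclusion $M^{\G}\subseteq\Sigma^{\W}$ was already observed in the remark following Proposition~\ref{chamber group}, and the argument is short: if $q\in M^{\G}$ then its $\G$-orbit is the single point $\{q\}$, and since every section meets every orbit we get $q\in\Sigma$; moreover each $w\in\W$ acts on $\Sigma$ through some element $g$ of the section stabilizer $\G_{\sigma(\Sigma)}$, and $gq=q$ because $q$ is $\G$-fixed, so $wq=q$ and $q\in\Sigma^{\W}$. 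In particular, when the chamber $C$ is a simplex (equivalently $\Fix(\W)=\emptyset$), this already forces $M^{\G}=\emptyset=\Sigma^{\W}$ and there is nothing further to prove.

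For the reverse inclusion we may therefore assume $\Sigma^{\W}=\Fix(\W)\neq\emptyset$; then by Proposition~\ref{chamber group} the chamber is a join $C=\Fix(\W)\ast\Delta$ with $\Delta$ a simplex of dimension $r-1$, where $r:=\rank\W$. The structural point I would exploit is that $\Fix(\W)$ lies in the closure of the \emph{open} $i$-th chamber face of $C$ for every $i=1,\dots,r$ --- in join coordinates a point of $\Fix(\W)$ is the limit of points carrying a fixed interior coordinate in the $i$-th facet of $\Delta$ as the join parameter degenerates onto $\Fix(\W)$. Now fix $p\in\Fix(\W)$. For each $i$ choose a point $q_{i}$ in the open $i$-th face of $C$ so close to $p$ that $q_{i}$ lies in the normal tube $\G\times_{\G_{p}}\nu_{p}$ of the orbit $\G p$; since $\Sigma$ is totally geodesic and orthogonal to the orbits, $q_{i}=\exp_{p}(v_{i})$ with $v_{i}\in T_{p}\Sigma\subseteq\nu_{p}$, and the slice theorem yields $\G_{q_{i}}=(\G_{p})_{v_{i}}\subseteq\G_{p}$. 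As the isotropy group is locally constant along the connected open $i$-th face, $\G_{q_{i}}$ is precisely the face isotropy group attached to the $i$-th face of $C$; hence the Primitivity Theorem~\ref{prim} gives $\G=\langle\G_{q_{1}},\dots,\G_{q_{r}}\rangle\subseteq\G_{p}$, so $\G_{p}=\G$ and $p\in M^{\G}$.

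Combining the two inclusions yields $M^{\G}=\Sigma^{\W}$, and the asserted relation between $\rank\W$, $\dim\Sigma^{*}$ and $\dim M^{\G}$ then drops out of the join decomposition $C=\Fix(\W)\ast\Delta^{\rank\W-1}$ of Proposition~\ref{chamber group} (with the empty join read as the simplex case). I expect the one genuinely delicate point to be the inclusion $\Sigma^{\W}\subseteq M^{\G}$, and within it the step of upgrading the a priori merely \emph{conjugacy} relation between $\G_{p}$ and the face isotropy groups of $C$ to an honest containment; this is exactly what the slice theorem at $p$ provides, once the join structure of $C$ has been used to see that $p$ can be approached inside $\Sigma$ by points of each open chamber face. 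The remaining inputs --- the structure of sections and chambers (Proposition~\ref{chamber group}) and the fact that $\G$ is generated by the face isotropy groups of a chamber (Theorem~\ref{prim}, which in turn rests on chamber connectivity) --- are already established above.
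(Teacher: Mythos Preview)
Your proof is correct, and it takes a genuinely different route from the paper's. The paper argues as follows: pick $\g\in\G$, join $\g C$ to $C$ by a gallery (using connectivity, Theorem~\ref{complex}), and observe that any two adjacent chambers share a face and hence share the same ``$-1$-simplex'' $\Sigma^{\W}$; thus $\g\Sigma^{\W}=\Sigma^{\W}$ for all $\g$, i.e.\ $\Sigma^{\W}$ is $\G$-invariant. Since orbits are perpendicular to $\Sigma\supset\Sigma^{\W}$, any $\G$-orbit contained in $\Sigma^{\W}$ must be a point, giving $\Sigma^{\W}\subseteq M^{\G}$.

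Your approach instead works pointwise: using the join structure of $C$ you place a point from each open face arbitrarily close to a given $p\in\Fix(\W)$, invoke the slice theorem to embed every face isotropy group into $\G_p$, and then apply Primitivity (Theorem~\ref{prim}) to conclude $\G=\G_p$. Both arguments ultimately rest on chamber connectivity --- the paper uses it directly via the gallery, you use it one step removed through Primitivity. Your route has the virtue of making the role of the face isotropy groups completely explicit and avoiding the separate orthogonality step; the paper's route is a bit shorter and avoids the slice-theorem bookkeeping (in particular the check, which you handled correctly, that isotropy is constant along each connected open face inside the section). Either is a perfectly good proof.
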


\begin{proof}
Since obviously $M^{\G} \subset \Sigma^{\W}$ and equality has been
proved in the previous section if $M^G$ is nonempty, it remains to prove that $M^{\G} \ne
\emptyset$ as long as $\Sigma^{\W}$ is nonempty (cf. \ref{section fix points}).

By assumption $M^* = \Sigma^* = C = \Sigma^{\W} * \Delta^{\ell} =
\Sph^{k-\ell-1} * \Delta^{\ell}$. Since all $\G$ orbits
corresponding to $\Sigma^{\W} = \Sph^{k-\ell-1}$, are of the same
type (corresponding to the most singular stratum of the orbit space) and are perpendicular to the section $\Sigma$ it suffices to
see that $\Sigma^{\W}$ is preserved by $\G$.

Pick any $\g \in \G$ and join the chamber $\g C$ to $C$ with a
gallery. Since any two consecutive chambers in a gallery have a
common ``face" and thereby the same ``$-1$-simplex", i.e., the same
fixed point set for the respective Weyl groups, it follows that also
$\g C$ has the same ``$-1$-simplex", which however is $\g
\Sigma^{\W}$.
\end{proof}

\begin{example}   \label{ex}
Here are examples showing that the conclusion above may fail in
cohomogeneity one as well as when the section is a projective space.

(1) Let $M = \CP^n = \SU(n+1)/\U(n)$. Then $\G = \U(n)$ acts by
cohomogeneity one with one fixed point. However, its polar group is
$\Z_2$ acting on a section $\Sph^1$ with two fixed points.

(2)  The obvious polar $\G = \U(1)\times \U(1)\times \U(n)$
representation on $\C^{n+2}= \C+ \C+ \C^n$ descends to a polar
action on $\CP^{n+1}$ with two fixed points (corresponding to the two
$\C$ summands). Its section is $\RP^2$ with $\RP^2/\W = \CP^{n+1}/\G$
a right angled spherical triangle. In particular, its Weyl group
must necessarily have three fixed points.
\end{example}

\smallskip

The case where the orbit space $M^* = \Sigma^* = C$ is not a simplex, i.e., by \ref{chamber group} it is a join of a
sphere with a simplex (in particular $M^{\G} \ne \emptyset$) will be dealt with in Section \ref{input}.

\medskip
We now point out some simple but crucial strong local properties of
the chamber system $\mathscr{C}(M, \G)$ of a positively curved
simply connected polar manifold in all remaining cases, i.e., when
the orbit space is a simplex.

\smallskip
Say $\M=(m_{ij})$, $i, j \in I$ is the Coxeter matrix for the
reflection group $\W$ of the section $\Sigma$ if it is a sphere, or
else of $\tilde \W$. In the latter case any word in the generators
$\r_i$ of $\W$ whose lift is the antipodal map in $\tilde \W$ is a
non-Coxeter relation in $\W$, and must necessarily involve all
generators of $\W$.  For any fixed  proper subset $J \subset I$ let $\M_J$ denote the submatrix of $\M$ with entries $m_{ij}$ , $i, j \in J$. Correspondingly, we  let $\W_J$ denote the subgroup of $\W$ generated by $\r_i$, $i \in J$. It is well known that  the subgroup $\W_J$ of $\W$ as well as of
$\tilde \W$, is a Coxeter group of type $\M_J$.
\smallskip

Recall, that a chamber system  $\mathscr{C}$ over $I$, by definition has \emph{type $\M$} if all $\{i,j\}$ residues, $i, j \in I$ are so-called generalized $m_{ij}$-gons (cf. \cite{Ro}).

 For any chamber $C$, consider $C_J := \cap_{i \in J}
C_i$, where $C_i$ is the $i$-face of  $C$. For
 an interior point $p \in C_J$, let $\Sph_{p,J}^{\perp}$ denote the unit sphere normal to the orbit stratum of $\G p$ at $p$, i.e.,  $\Sph_{p,J}^{\perp}$ is the sphere in the normal space to the orbit perpendicular to the fixed point subspace of $\G_p$. It is now apparent (see, e.g., \ref{folding}) that

\begin{lem}[Residue]\label{residue}
The $J$-residue of $\mathscr{C}$ and
$\mathscr{C}(\Sph_{p,J}^{\perp}, \G_p)$, for any $p\in C_J$ are
isomorphic as chamber systems of type $\M_J$.
\end{lem}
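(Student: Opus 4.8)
The plan is to produce an explicit, label-preserving isomorphism of chamber systems between the $J$-residue $\mathscr{R}$ of $\mathscr{C}$ containing a fixed chamber $C$ and $\mathscr{C}(\Sph_{p,J}^{\perp},\G_p)$, by passing to tangent directions at $p$. We may assume $p$ lies in the stratum whose $\G$-isotropy is the generic isotropy along $C_J$; then the slice representation of $\G_p$ on the normal space to that stratum is polar with section of dimension $|J|$ and polar group the isotropy group $\W_p=\W_J$ (using the facts of Section 1 and the preceding lemma), so $\mathscr{C}(\Sph_{p,J}^{\perp},\G_p)$ is a chamber system over $J$ of type $\M_J$. Let $C^{\perp}\subset\Sph_{p,J}^{\perp}$ be the chamber of this slice system consisting of the unit directions at $p$ pointing into $C$; equivalently $C^{\perp}$ is the link of $C_J$ in $C$ at $p$, and for each $i\in J$ its $i$-face is the link at $p$ of the $i$-face $C_i$ of $C$.

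The first step is to match the isotropy data. Since $p\in C_J\subset C_i$ lies in the closure of the open $i$-stratum for every $i\in J$, after choosing the interior face points of $C$ suitably one has $\G_i\subset\G_p$ for all $i\in J$; as the $\G$-strata of $M$ near $p$ agree with the strata of the slice representation, the group $\G_i$ is exactly the face isotropy group of the $i$-face of $C^{\perp}$ in $\mathscr{C}(\Sph_{p,J}^{\perp},\G_p)$, and it acts on the corresponding normal sphere (the $i$-residue through $C$, resp. $C^{\perp}$) in the same way in $M$ and in the slice. By \rref{folding}, any $J$-gallery of type $f=i_1\cdots i_m$ starting at $C$ is realized by a word $\h_{i_1}\cdots\h_{i_m}$ with $\h_{i_\ell}\in\G_{i_\ell}\subset\G_p$, and the same word realizes the corresponding gallery starting at $C^{\perp}$ in the slice system; hence $\mathscr{R}=\langle\,\G_i\mid i\in J\,\rangle\cdot C$. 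Applying the Primitivity Theorem \tref{prim} to the slice action (a linear, hence positively curved, polar action on the round sphere $\Sph_{p,J}^{\perp}$, whose chamber system is connected by \tref{complex} when $|J|\ge 2$) gives $\G_p=\langle\,\G_i\mid i\in J\,\rangle$, so also $\mathscr{C}(\Sph_{p,J}^{\perp},\G_p)=\langle\,\G_i\mid i\in J\,\rangle\cdot C^{\perp}$.

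It is then natural to set $\Phi(gC)=g\cdot C^{\perp}$ for $g\in\langle\,\G_i\mid i\in J\,\rangle$. This is well defined: if $gC=g'C$ then $g^{-1}g'$ stabilizes $C$, hence lies in the principal isotropy group $\H$ of $M$ by the Chamber Group Proposition \pref{chamber group}; since $\H\subset\G_p$ fixes all directions into $C$ at $p$, it stabilizes $C^{\perp}$, so $gC^{\perp}=g'C^{\perp}$. The map $\Phi$ is visibly surjective, label preserving, and respects $i$-adjacency (two chambers are $i$-adjacent exactly when they share their $i$-face, a condition detected on the links at $p$ by the previous step). For injectivity, suppose $gC^{\perp}=g'C^{\perp}$; then $g^{-1}g'$ stabilizes $C^{\perp}$, so by \pref{chamber group} applied to the slice system it fixes $C^{\perp}$ pointwise, whence $g^{-1}g'\in\G_p$ fixes $p$ together with every direction into $C$ at $p$. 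Then $C$ and $(g^{-1}g')C$ share a neighborhood of $p$, and since the intersection of two chambers of $M$ is empty or a common subsimplex, they coincide, i.e. $gC=g'C$. Thus $\Phi$ is an isomorphism of chamber systems of type $\M_J$; the rank-one case $|J|=1$ is immediate, both sides being the normal sphere to the $i$-stratum with its (unique) partition.

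The main obstacle is the comparison of the equivariant stratified structure of $M$ near $p$ with that of the slice representation — precisely, that $\G_i\subset\G_p$ for $i\in J$ and that these are exactly the face isotropy groups of $C^{\perp}$ in the slice chamber system, so that the folding processes for $\mathscr{R}$ and for $\mathscr{C}(\Sph_{p,J}^{\perp},\G_p)$ use literally the same data; everything else is then bookkeeping, together with the two applications of the triviality of the chamber group (\pref{chamber group}), one for $M$ and one for the slice action on the sphere $\Sph_{p,J}^{\perp}$, which is simply connected when $|J|\ge 2$.
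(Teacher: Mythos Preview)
Your proof is correct and follows exactly the approach the paper intends: the paper offers no argument at all beyond the parenthetical pointer to the folding description in \rref{folding}, and your map $\Phi(gC)=gC^{\perp}$ together with the identification of the face isotropy groups $\G_i\subset\G_p$ for $i\in J$ is precisely the content of that remark made explicit. One small comment: for the slice system you cite \pref{chamber group} and \tref{complex}, which formally require simple connectivity and positive curvature; for a linear polar representation on a round sphere these facts (trivial chamber group, connected chamber system) are more elementary consequences of the section being a linear subspace with its finite reflection group, and you could cite that directly --- but since $\Sph_{p,J}^{\perp}$ is indeed simply connected for $|J|\ge 2$ (its cohomogeneity-one substructure forces $\dim\Sph_{p,J}^{\perp}\ge 2$), your citations are also valid as stated.
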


\medskip

Recall that a chamber system $\mathcal{B}$ over $I$ is called a
\emph{building} of type $\M=(m_{ij})$, $i, j \in I$, if each chamber
is $i$-adjacent to at least one other chamber, and there is a
$\W(\M)$ valued ``distance function"
$$\delta: \mathcal{B} \times \mathcal{B} \to \W$$
with the property $\delta(x,y) = \w \in \W$ if and only if the types
of minimal galleries between $x$ and $y$ coincide with the types of
minimal galleries in the Coxeter complex $\mathscr{C}(\Sigma, \W) =:
\mathcal{W}$ from $1$ to $\w$.

\medskip
The Coxeter complex $\mathcal{W}$ is itself a building with
$\delta(\u,\v) = \u^{-1}\v$. We call ``isometric" images of $\mathcal{W}$ in
$\mathcal{B}$ \emph{apartments} in $\mathcal{B}$. Another  example of central importance to us is the following

\begin{example}[Polar Representations]\label{polar rep}
The chamber system,  $\mathcal{B} = \mathscr{C}(\Sph, \K)$
associated to the restriction of a polar representation of a compact
Lie group $\K$ to the unit sphere $\Sph$ (without fixed points) is a
fundamental example of a (spherical) Tits building (see \cite{Ti1} and \cite{Da}).
\end{example}

\smallskip
\begin{rem}[Basic Building Properties] \label{buildingprop}
In a building $\mathcal{B}$, the following properties are basic  and
used repeatedly in the next sections.

\smallskip

$\bullet$ (Connectedness) Any two chambers $x, y$ are joined by a
minimal gallery $\Gamma_f$, which in turn is contained in an
apartment $A$.

$\bullet$ (Uniqueness) A minimal gallery from $x$ to $y$ is uniquely
determined by its type.

$\bullet$ (Convexity) If $x, y$ are chambers in an apartment $A$,
every minimal gallery from $x$ to $y$ is contained in $A$.

$\bullet$ (Homotopy) If $\Gamma$ is a gallery from $x$ to $y$ of
type $f$ (not necessarily minimal), and $f\simeq g$ (see below),
then there is a gallery of type $g$ from $x$ to $y$.

$\bullet$ A gallery of type $f$ is minimal if and only if $f =
i_1\cdots i_m$ is a so-called \emph{reduced word}, or equivalently
$\w = \r_f: = \r_{i_1}\cdots \r_{i_m}$ cannot be expressed as $\r_g$
for $g$ a shorter word.
\end{rem}
\smallskip

Since the slice representation of each isotropy group $\G_p$ is
polar, it follows from  \ref{polar rep} and the residue lemma \ref{residue}  that

\begin{prop}
For any proper $J \subset I$, any $J$ residue in the chamber system
$\mathscr{C}(M, \G)$ is a spherical building of type $\M_J$.
\end{prop}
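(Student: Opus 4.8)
The plan is to reduce everything to the two lemmas immediately preceding the statement together with \rref{polar rep}. Fix a proper subset $J \subset I$ and a chamber $C$, and set $C_J := \cap_{i \in J} C_i$. By the first of the two lemmas just above, $\W_J$ is an honest subgroup of $\W$ (and of $\tilde\W$), so there is no obstruction on the Coxeter side: the type $\M_J$ makes sense and corresponds to a genuine finite Coxeter group. The geometric content we want to transport is that this residue ``is'' the chamber system of a polar representation. For this, pick any point $p \in C_J$; the second lemma above identifies the $J$-residue of $\mathscr{C}(M,\G)$ containing $C$ with $\mathscr{C}(\Sph_{p,J}^{\perp}, \G_p)$ as chamber systems of type $\M_J$, where $\Sph_{p,J}^{\perp}$ is the unit sphere normal to the orbit stratum of $\G p$ at $p$.

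The key step is then to observe that the slice representation of the isotropy group $\G_p$ is polar — this is one of the ``simple and well known'' facts recorded in \sref{1} (the slice representation of any isotropy group is a polar representation). Restricting that polar slice representation to the normal unit sphere, after splitting off the fixed-point part of the slice, we are exactly in the situation of \rref{polar rep}: the associated chamber system $\mathscr{C}(\Sph, \K)$ of a fixed-point-free polar representation of a compact group $\K$ is a spherical Tits building. (If the slice representation has a nontrivial fixed space, that fixed space contributes only the ``$-1$-simplex'' factor $\Sph^{k-\ell-1}$ of the join description of the chamber and is irrelevant to the residue combinatorics; the residue sees only the effective part $\Sph_{p,J}^{\perp}$, which is the sphere of the effective polar representation on the normal space to the full stratum.) Hence $\mathscr{C}(\Sph_{p,J}^{\perp}, \G_p)$ is a spherical building, and its type is $\M_J$ by the second lemma above.

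Combining these, the $J$-residue of $\mathscr{C}(M,\G)$ through $C$ is a spherical building of type $\M_J$. Since $\G$ acts transitively on chambers of $\mathscr{C}(M,\G)$ (indeed $\mathscr{C}(M,\G) = \cup_{g\in\G} gC$), and a conjugate $g\G_p g^{-1} = \G_{gp}$ has the equivalent polar slice representation, every $J$-residue is equivalent — via an element of $\G$ — to the one through $C$, and is therefore also a spherical building of type $\M_J$. This gives the proposition for all proper $J \subset I$.

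\textbf{Anticipated main obstacle.} The genuinely delicate point is bookkeeping around the join structure of the chamber: when $C$ is not a simplex but $\Sph^{k-\ell-1} * \Delta^\ell$, one must be careful that the ``$J$-residue'' and ``$J$-face'' language matches up with the effective polar representation on $\Sph_{p,J}^{\perp}$, and in particular that the normal sphere at $p \in C_J$ is the right sphere (the one on which $\G_p$ acts polarly without fixed points after removing the fixed subspace). This is essentially the content of the second lemma above, invoked via \rref{folding}, so the real work has already been isolated there; the remaining argument is the short assembly sketched above. A secondary point worth a sentence is that $J$ must be \emph{proper}: for $J = I$ the ``residue'' is all of $\mathscr{C}(M,\G)$, which by \tref{B}/\tref{C} need not itself be a building (only its universal cover is), and indeed Theorem E exhibits a case where it is not.
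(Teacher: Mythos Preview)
Your proposal is correct and follows essentially the same approach as the paper: the paper's proof is a single sentence invoking the polarity of slice representations together with the two preceding lemmas and \rref{polar rep}, and you have simply unpacked that sentence in detail. Your additional remarks on the join structure and on why $J$ must be proper are reasonable clarifications but not required for the argument.
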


 \no By invoking the following corollary of a profound result of Tits \cite{Ti2}, Corollary 3 in Section 5.3 (cf. also \cite{Ro}, Theorem 4.9), we get

 \begin{thm}[Tits]\label{tits covering}
 The universal Tits cover $\tilde {\mathscr{C}}$ of a (gallery-) connected chamber system $\mathscr{C}$ of (finite) type $\M$ over $I$ is a building
 if and only if all residues of rank three are covered by
 buildings.
 \end{thm}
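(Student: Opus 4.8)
The plan is to prove \tref{tits covering} in the form in which it is used: if $\tilde{\mathscr{C}}\to\mathscr{C}$ denotes the universal cover in the category of chamber systems over $I$ (i.e.\ the universal connected covering by morphisms restricting to isomorphisms on every residue of rank $\le 2$), then $\tilde{\mathscr{C}}$ carries a $\W$-valued distance $\delta\colon\tilde{\mathscr{C}}\times\tilde{\mathscr{C}}\to\W$ satisfying the property recalled in \ref{buildingprop}, hence is a building; and conversely every residue of rank $3$ of a building of type $\M$ is a building of type $\M_J$ with $|J|=3$, so the stated equivalence follows. For the nontrivial implication I would first record the standing reductions: the universal cover exists, is connected, and has the same type $\M$ with all rank $2$ residues unchanged; since a rank $2$ covering restricts to an isomorphism on any rank $2$ residue, every rank $2$ residue of $\tilde{\mathscr{C}}$ is a generalized $m_{ij}$-gon; and since by hypothesis every rank $3$ residue is covered by a building, and a rank $3$ chamber system that is a building is already simply $2$-connected, every rank $3$ residue of $\tilde{\mathscr{C}}$ \emph{is} a building. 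Thus $\tilde{\mathscr{C}}$ is connected, simply $2$-connected, and all of its residues of rank $\le 3$ are buildings.

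The combinatorial core is an analysis of homotopy classes of galleries. Call two galleries with the same extremities \emph{elementarily homotopic} if one arises from the other by deleting or inserting a back-tracking pair, or by replacing a subgallery all of whose chambers lie in a common residue of rank $\le 2$ by another subgallery in that residue with the same endpoints; simple $2$-connectivity of $\tilde{\mathscr{C}}$ means that every closed gallery is homotopic rel endpoints to the constant one. The key Lemma is: \emph{if $\Gamma_f$ and $\Gamma_g$ are minimal galleries in $\tilde{\mathscr{C}}$ with the same extremities $x,y$, then $f$ and $g$ are connected by braid relations of $\M$; in particular $\r_f=\r_g\in\W$.} One proves this by induction on $\ell(f)$, the substance being an \emph{exchange property} for $\tilde{\mathscr{C}}$: if $\Gamma_f$ is a minimal gallery from $x$ to $y$ and $C\sim_j x$ with $C\ne x$, then either $jf$ is reduced and there is a minimal gallery of type $jf$ from $C$ to $y$, or $f$ is homotopic rel endpoints to a reduced word beginning with $j$. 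The word lemma in $\W$ itself (two reduced expressions of $w\in\W$ differ by braid moves) is invoked at each step to match types; the generalized-polygon structure of rank $2$ residues handles the case $\ell(f)\le 1$ and the braid moves; and the rank $3$ building hypothesis supplies the exchange property inside rank $3$ residues.

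Granting the Lemma, define $\delta(x,y)\in\W$ to be the element $\r_f$ represented by the type $f$ of any minimal gallery from $x$ to $y$ (these exist by connectedness, and $\delta$ is well defined by the Lemma). Then the building axioms follow formally from the homotopy analysis: $\delta(x,y)=1\iff x=y$ (a minimal gallery of trivial type is constant); $\delta(y,x)=\delta(x,y)^{-1}$ (reverse the gallery, each $\r_i$ being an involution); and for $C\sim_j y$ one has $\delta(x,C)\in\{\delta(x,y),\,\delta(x,y)\r_j\}$, with the ``reduced'' case $\ell(\delta(x,y)\r_j)>\ell(\delta(x,y))$ forcing $\delta(x,C)=\delta(x,y)\r_j$ — which is exactly the exchange property combined with the rank $2$ residue structure. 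These statements say precisely that the types of minimal galleries from $x$ to $y$ are the types of minimal galleries in the Coxeter complex $\mathcal{W}$ from $1$ to $\delta(x,y)$, i.e.\ the characterization in \ref{buildingprop}; hence $\tilde{\mathscr{C}}$ is a building of type $\M$.

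The main obstacle is the exchange property in $\tilde{\mathscr{C}}$, i.e.\ the inductive step of the Lemma. Everything must be pushed down to residues of rank $\le 3$, where being a building provides the gate/exchange property, and then reassembled globally — and the only tool available for the reassembly is simple $2$-connectivity. So the argument is a piece of ``homotopy surgery'': from an offending configuration produce a closed gallery, triangulate it by residues of rank $\le 3$, contract each tile using that those residues are buildings, and carefully track how the contractions interact so that the global contraction closes up. This bookkeeping is the delicate part of Tits' argument (cf.\ \cite{Ti2} and the exposition in \cite{Ro}); the remaining verifications are routine. It is worth isolating the two background inputs used above — that a rank $\le 3$ chamber system which is a building is simply $2$-connected (so ``covered by a building'' upgrades to ``is a building'' for the rank $3$ residues of $\tilde{\mathscr{C}}$), and the existence and basic properties of the universal $2$-cover — each of which is standard but essential.
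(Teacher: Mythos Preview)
The paper does not prove this theorem at all: it is stated as a quoted result, introduced with ``By invoking the following corollary of a profound result of Tits \cite{Ti2} (cf.\ also \cite{Ro}),'' and then immediately applied to obtain \tref{building}. So there is no proof in the paper to compare against; your proposal goes well beyond what the paper attempts.

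As a sketch of Tits' original argument, your outline is accurate in its architecture: pass to the universal $2$-cover, observe that rank $\le 2$ residues are unchanged (hence generalized $m_{ij}$-gons), upgrade the rank $3$ hypothesis from ``covered by a building'' to ``is a building'' via simple $2$-connectedness of buildings, and then manufacture a $\W$-distance by showing that the type of a minimal gallery between two chambers is well defined up to braid moves. You correctly identify the crux as the exchange/gate property in $\tilde{\mathscr{C}}$ and the associated homotopy surgery, and you correctly defer that step to \cite{Ti2} and \cite{Ro}. That is honest, but it means your write-up is a roadmap rather than a proof: the substantive content of Tits' theorem is precisely the careful induction that pushes a global closed gallery down to rank $\le 3$ tiles and reassembles the contractions coherently, and you have not carried that out. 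If the intent is to cite the result, a single sentence pointing to \cite{Ti2} (Corollary~3) or \cite{Ro} (Theorem~4.9) would suffice and would match the paper; if the intent is to supply an independent proof, the inductive step you flag as ``the delicate part'' needs to be written in full.
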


 \no We conclude

\begin{thm}[Building Cover]\label{building}
Suppose $M$ is a positively curved simply connected polar $\G$ manifold with orbit space a simplex of dimension at least $3$.
 Then
the universal Tits cover $\tilde{\mathscr{C}}(M;\G)$ of the associated
chamber system $\mathscr{C}(M;\G)$ is a spherical building.
\end{thm}

\begin{rem}\label{free}
The fact that all residues of rank at least $3$ of the chamber system
$\mathscr{C}(M;\G)$ are buildings implies that universal Tits cover $\tilde{\mathscr{C}}(M;\G)$ can be viewed also as the usual topological universal cover  of $\mathscr{C}(M;\G)$ equipped with the thin topology. For this reason we frequently simply refer to $\tilde{\mathscr{C}}(M;\G)$ as the universal cover of $\mathscr{C}(M;\G)$. - In particular, the fundamental group $\pi$ of $\mathscr{C}(M;\G)$ \emph{acts freely} by deck transformations on
$\tilde{\mathscr{C}}(M;\G)$ equipped with the thin topology. It is a startling consequence of our main result Theorem \ref{mainresult}  in Section \ref{Compact spherical Buildings}  that $\pi$ in fact is either $\S^1$ or $\S^3$ with \emph{discrete topology} when $\M$ has no isolated nodes and $\tilde{\mathscr{C}}(M;\G)$ is a building. Note also that $\pi$ acting freely on the set $\tilde{\mathscr{C}}(M;\G)$ of course is independent on topology.
\end{rem}

Recall that the universal Tits cover is obtained via a notion of homotopies of galleries in
analogy with the usual construction of a topological universal cover.

Here two galleries $\Gamma_1\Gamma_0\Gamma_2$ and
$\Gamma_1\Gamma'_0\Gamma_2$ in a chamber system, $\mathscr{C}$  of
type $\M$ over $I$ are said to be \emph{elementary homotopic}  if
$\Gamma_0$ and $\Gamma _0'$ are galleries in a rank $2$ residue with
the same extremities. A {\it homotopy} from a gallery $\Gamma$ to
another one $\Gamma'$ (with fixed extremities) is a finite sequence of
elementary homotopies which transforms $\Gamma$ to $\Gamma'$. When
such a homotopy exists we write $\Gamma \simeq \Gamma'$.

 By construction, $\tilde {\mathscr{C}}$ as a set is a union of chambers, each chamber,
 $\tilde C \in \tilde {\mathscr{C}}$ being a {\it homotopy class}, $[\Gamma] = [C_0, \ldots, C_m]$ of galleries $\Gamma = (C_0, \ldots, C_m)$ from ${\mathscr{C}}$ starting at a fixed chamber
 $C_0 \in {\mathscr{C}}$ and ending at $C_m = C \in {\mathscr{C}}$, and where the covering map $p: \tilde {\mathscr{C}} \to {\mathscr{C}}$ takes $\tilde C$ to $C_m$.
 Also, the \emph{adjacency} relation among chambers is defined as follows: $\tilde C = [C_0, \ldots, C_m]$ is ``$i$-adjacent" to $\tilde C' = [C_0, \ldots, C_{m-1}, C_m']$ when $C_{m}$ and $C_m'$
 are  ``$i$-adjacent", and to $\tilde C'' = [C_0, \ldots, C_{m}, C'']$ for other $i$'s when $C_m$ and $C''$ are ``$i$-adjacent". All other \emph{incidence} relations follow from this,
  and the covering map $p$ preserves incidence relations.
   In
  this fashion the covering map $p$ preserves faces, and hence all other types.

\medskip

Note that in a Coxeter complex, $\mathcal{W}$ galleries starting at
$1$ are in one-to-one correspondence with their types. Here one also
uses the notion of \emph{strict homotopy}, denoted $f \simeq g$,
where the notion of an elementary homotopy  above is replaced by the stronger
notion of an  \emph{strict elementary homotopy}. Here a strict
elementary homotopy is an alteration of a word of the form
$f_1p(i,j)f_2$ to a word $f_1p(j,i)f_2$, where $p(i,j)$ is a word
of the form $\cdots ijij$ (with $ m_{ij}$ letters and ending in $j$);
e.g., if $m_{ij}=3$, $p(i,j)=jij$; and $p(j,i)=iji$. In particular,
$f$ and $g$ have the same length if they are strictly homotopic (but
not necessarily if they are just homotopic). Also $\r_f = \r_g$ if $f$ and $g$
are strictly homotopic, but the converse is false, since one may
have redundant letters; e.g, the words $f=f_1iif_2$ and $g=f_1f_2$
are not strictly homotopic but $\r_f = \r_g$. A word $f$ is called
{\it reduced} if it is not strictly homotopic to a word of the form
$f_1iif_2$.

\begin{rem}
Buildings are simplicial complexes, but our
chambers systems $\mathscr{C}(M;\G)$ are frequently not. This is
illustrated for example with the standard $\T^2$ action on $\CP^2$. Here all
chambers are spherical right angled 2-simplices, and they all have
the same three vertices, the fixed points of $\T^2$. As the Building Cover Theorem above
shows, we do not need to assume $\mathscr{C}(M;\G)$ to be simplicial when the rank of $\M$ is at least 4. However, in the rank 3 case where the Building Cover Theorem says nothing, we do indeed need  $\mathscr{C}(M;\G)$ to be simplicial in the irreducible cases, i.e., the cases of types $\A_3$ and $\CC_3$. This will be proved in Theorem \ref{simplicial}
and will allow us to use work of Tits on so-called \emph{geometries}, i.e., chamber systems of type $\M$ whose underlying geometric realization is simplicial.
\end{rem}

\begin{rem}
Equipped with the thin metric, our chamber system
$\mathscr{C}(M, \G)$ has the local structure of a CAT(1) space. This
is of course true for its universal cover $\tilde{\mathscr{C}}(M,
\G)$ as well. In fact, when its dimension is at least three
(corresponding to rank at least four), it follows by work of Charney
and Lytchak \cite{CL}, that in fact $\tilde{\mathscr{C}}(M, \G)$ is
a CAT(1) space and in fact a spherical building by their geometric
characterization of buildings.
\end{rem}

\smallskip

\section{Compact spherical Buildings} \label{Compact spherical Buildings}  

Throughout this section, we assume that the orbit space $M^* = C$ is
a simplex, and that the universal cover $\tilde {\mathscr{C}} :=
\tilde {\mathscr{C}}(M,\G)$ of our base chamber system $\mathscr{C}:
= \mathscr{C}(M,\G)$ is a spherical building of rank at least $3$. In particular, $\tilde {\mathscr{C}}$ is also a simplicial complex and we use $p: \tilde {\mathscr{C}} \to \mathscr{C}$ to denote the
covering map.

Our primary objective is to endow $\tilde {\mathscr{C}}$ with a
natural topology inherited from the topology of $M$, in such a way
that it becomes a compact spherical building in the sense of
 Burns and Spatzier \cite{BSp}, where the extension by Grundh\"ofer, Kramer,
Van Maldeghem, and Weiss in  \cite{GKMW} is crucial for us. Our second objective is to
analyze the fundamental group $\pi$ of ${\mathscr{C}}(M,\G)$ and its
action on the cover when $\tilde {\mathscr{C}}(M,\G)$ is a compact
spherical building. This in fact will imply Theorem A in the
introduction in all cases except where $\G$ has fixed points or
where the Coxeter diagram for $\M$  either has isolated nodes  or is
of type $\A_3$ or $\CC_3$.

 Section \ref{sectionC3gen} and  \cite{FGT} are devoted to the case where the
 Coxeter diagram of $\M$ is of type $\A_3$ or $\CC_3$.
In the special reducible cases where isolated nodes are present in the Coxeter diagram of $\M$ or $M^{\G} \ne \emptyset$,  rather different arguments will be employed in Sections \ref{input} and \ref{genRed}.

\medskip
We will write the set of vertices $\text{Vert}(\tilde
{\mathscr{C}})$ of a Tits building $\tilde{\mathscr{C}}$ as a
disjoint union $\text{Vert}(\tilde {\mathscr{C}})=\tilde{V}_1\cup\cdots\cup
\tilde{V}_{k+1}$ over the vertices of the same cotype where $k+1$ is the
rank of $\M$. The set of $r$-simplices of type $(i_1,\dots,
i_{r+1})$ for $r\le k$ will be denoted by  $\tilde
{\mathscr{C}}_{i_1,\dots, i_{r+1}}$.

 Recall, that a \emph{compact (spherical) building} according to \cite{BSp} is a Tits building $\tilde
 {\mathscr{C}}$ with a Hausdorff topology on the set
 $\text{Vert}(\tilde {\mathscr{C}})=\tilde{V}_1\cup\cdots\cup  \tilde{V}_{k+1}$ of all vertices
 such that the set $\tilde {\mathscr{C}}_{i_1,\dots, i_{r+1}}$ of all  simplices of type
 $(i_1,\dots, i_{r+1})$ is closed in the product  $\tilde{V}_{i_1}\times\cdots\times \tilde{V}_{i_{r+1}}$. With the
 induced topology on the $k$ simplices $\tilde {\mathscr{C}}_{1,\dots,k+1}$, $\tilde
 {\mathscr{C}}$ is called {\it compact, locally connected, infinite, metric } if $\tilde
 {\mathscr{C}}_{1,\dots,k+1}$ has the appropriate property.

 It is the main result of \cite{BSp} that an infinite,
 irreducible, locally connected, compact, metric, topologically Moufang  building of rank at least $2$ is classical. Namely, it is
 a Tits building associated to a noncompact real semisimple Lie group via the following description (cf. also proof of Theorem \ref{mainresult}):

 \begin{example}[Symmetric Spaces and Buildings]
 Let $\U$ be a connected noncompact real semisimple Lie group without center
 and $\K \subset \U$ a maximal compact subgroup (which is unique up to conjugation). The isometric action of $\U$ on the
 \emph{symmetric space} $N = \U/\K$ of nonpositive curvature induces a continuous action on the boundary at infinity, $ \Sph_{\infty} $, with the same orbits as those of the subaction by $\K$.
 Here the action by $\K$ is topologically equivalent to the isotropy representation of $\K$ on the unit sphere $\Sph_p$ at $p\in N$ with $\U_p = \K$.

 The isotropy representation of $\K = \U_p$ is polar with sections the tangent spaces of \emph{flats} through  $p \in N = \U/\K$. These flats at infinity are  \emph{apartments} of a
 (topological) building, $\mathscr{C}(\U)$ equivalent to $\mathscr{C}(\Sph_p, \U_p)$. One gets all apartments in the building in this fashion by letting $p$ go through all points of $N$.
 The group $\U$  is the  identity component of the (topologiocal) automorphism group  $\text{Aut}_\text{top}(\mathscr{C} (\U))$ of the building.

 An algebraic description of $\mathscr{C}(\U)$ can be given via the set of all parabolic subgroups of $\U$. We think of $\mathscr{C}(\U)$ as the set of parabolic subgroups of $\U$ with the following partial order:
If $C_1, C_2 \in \mathscr{C}(\U)$, we call $C_1$ a \emph{face} of $C_2$ and write $C_1 <  C_2$ if $C_2 \subset C_1$.
The chambers are the minimal parabolic subgroups.  Let $\W$ denote the Weyl group of the symmetric space $\U/\K$.
We fix a minimal parabolic subgroup $\B$.
The $\W$ valued metric $\delta$ in the definition of a building is then defined as follows: Given chambers
$C=\g\B$ and $C'=\g'\B$, there is by the Bruhat decomposition a unique $\w \in \W$ such that
$\B\g^{-1}\g'\B=\B\w\B$. We set $\delta(C,C') = \w.$

The correspondence between the geometric and algebraic description
is that the isotropy groups under $\U$ of the chambers and their
subsimplices at infinity are
 exactly the parabolic subgroups of $\U$.
 \end{example}


 \begin{center}
 The topology on $\tilde {\mathscr{C}}(M, \G)$
 \end{center}

 \smallskip

When considering ${\mathscr{C}}(M,\G)$ as a set of chambers, each
being a compact subset of the metric space $M$,
${\mathscr{C}}(M,\G)$ is a compact metric space with the classical
Hausdorff metric.
 Moreover, the same holds for the set of all galleries with any upper bound on the number of chambers. Since $\tilde{\mathscr{C}}(M,\G)$ is a building of type $\M$ any two chambers can be connected by a gallery of length at most $1/2|\W(\M)|$.
\medskip

Let us fix a chamber $\tilde C_0 \in \tilde{\mathscr{C}}$. For any
fixed large positive integer $k\ge \frac 12 |\W(\M)|$, $\epsilon
> 0$ and any chamber $\tilde C \in \tilde {\mathscr{C}}$, we let

\begin{center}
$B_{\epsilon, k}(\tilde C)$ be the union of those chambers $ \tilde
C' \in \tilde {\mathscr{C}}$
\end{center}

\no for which there are (stuttering) galleries $\Gamma$ and
$\Gamma'$ of length at most $k$
 starting at $\tilde C_0$ and ending at $\tilde C$, respectively $\tilde C'$ so that  the (stuttering) galleries $p(\Gamma)$ and $p(\Gamma')$ in ${\mathscr{C}}$ are
 within Hausdorff distance $\epsilon$ from one another in $M$. We will refer to the topology generated by these sets as the \emph{chamber topology} on the building
 $\tilde {\mathscr{C}}$.

 \smallskip
The geometric realization of the building $\tilde {\mathscr{C}}(M,
\G)$ is a simplicial complex $\tilde {\mathscr{S}}(M,\G)$. We will
show that this topology induces a topology on  $\tilde
{\mathscr{S}}(M,\G)$   making it into a compact spherical building
in the sense of Burns and Spatzier \cite{BSp}.

\smallskip
The following will be used repeatedly

\begin{lem}[Homotopy Control]\label{control}
Let  $\Delta$  be a building of rank at least $3$. Then for any $k$
there is a $C(k)$ with the following property: Any galleries
$\Gamma$ and $\Gamma'$ of lengths at most $k$ with the same
extremities are homotopic by a homotopy consisting of at most $C(k)$
chambers.
\end{lem}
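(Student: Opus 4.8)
\textbf{Proof proposal for Lemma \ref{control} (Homotopy Control).}

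The plan is to argue by a compactness/finiteness principle: in a building of type $\M$, the combinatorial data controlling a gallery of bounded length lives in a finite set, so a uniform bound must exist, provided we can produce \emph{some} homotopy for each pair. First I would reduce to the case where $\Gamma$ and $\Gamma'$ are both minimal galleries. Given arbitrary galleries $\Gamma, \Gamma'$ of length at most $k$ with the same extremities $x,y$, the Homotopy property in \ref{buildingprop} lets me insert backtracks (stammering moves $\cdots ii \cdots$, which are elementary homotopies within a rank $1$, a fortiori rank $2$, residue) to pass from $\Gamma$ to a minimal gallery $\Gamma_{\min}$ from $x$ to $y$; the number of chambers involved is bounded in terms of $k$ and $|\W(\M)|$ since a minimal gallery has length at most $\frac12|\W(\M)|$ and each insertion/deletion of a stammer touches a controlled number of chambers. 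Doing the same for $\Gamma'$, and using Uniqueness of minimal galleries of a fixed type together with strict homotopy of reduced words in the Coxeter group (any two reduced expressions for the same $w\in\W$ are connected by strict elementary homotopies, each realized in a rank $2$ residue), I get a homotopy from $\Gamma$ to $\Gamma'$.

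The key point for the \emph{uniform} bound $C(k)$ is then that everything in sight is finite and depends only on $\M$ (hence only on $k$, since $\M$ is fixed once the building is): the Coxeter group $\W(\M)$ is finite (\ref{finite cox}), there are finitely many words of length at most $k$ in the alphabet $I$, finitely many reduced words representing a given element, and Tits' solution of the word problem bounds the number of strict elementary homotopies needed to pass between two reduced expressions of the same element purely in terms of $\W(\M)$. Each elementary homotopy is supported in a rank $2$ residue and thus replaces a subgallery of length at most $m_{ij}\le \max_{ij} m_{ij}$ by another of the same length; splicing it into the ambient gallery changes only that bounded block. Concatenating all these moves, the total number of distinct chambers appearing across the whole homotopy is bounded by a function of $k$ and $\M$ alone, which we call $C(k)$.

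Let me spell out the one genuinely building-theoretic input: in a rank $2$ residue (a generalized polygon, or more precisely a building of type $\M_{\{i,j\}}$), any two galleries with the same extremities are homotopic through galleries contained in that residue, and we can bound the number of chambers used in terms of $m_{ij}$ — this is immediate since a rank $2$ residue through two chambers at ``distance'' $d$ contains a bounded configuration and one passes from any gallery to the minimal one by removing backtracks, then between the (at most two) minimal galleries by a single move of length $2m_{ij}$. Assembling the global homotopy from local ones in higher rank is exactly the content of the definition of homotopy of galleries (finite sequences of elementary homotopies), so no new geometry is needed there.

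The main obstacle is bookkeeping rather than conceptual: making sure that when one elementary homotopy is performed deep inside a long gallery, the ``at most $C(k)$ chambers'' count is genuinely the number of distinct chambers occurring \emph{throughout} the homotopy, not merely the number in each intermediate gallery — one must check that the sequence of intermediate galleries, though possibly long, sweeps out only boundedly many chambers, which follows because each move is local and the minimal-gallery target is reached in boundedly many steps. I expect the cleanest writeup to fix, once and for all, $N=\frac12|\W(\M)|$, bound the length of every intermediate gallery by $k+2N$ (insertions of backtracks), bound the number of moves by a constant $D(\M)$ from Tits' word problem, and set $C(k) = (k+2N) + D(\M)\cdot 2\max_{ij} m_{ij}$ or any such explicit expression; verifying this inequality is the routine calculation I would not grind through here.
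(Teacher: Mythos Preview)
Your proposal is correct and takes essentially the same approach as the paper: both arguments rest on the finiteness of words of length at most $k$ over $I$, Tits' solution to the word problem (any non-reduced word is strictly homotopic to one of the form $f_1iif_2$ in boundedly many braid moves, each realized as an elementary homotopy in a rank $2$ residue), and then shortening. The only organizational difference is that the paper packages this as an induction on $k$ and, for the base case of two minimal galleries, invokes the Convexity property (both lie in a common apartment, which has exactly $|\W(\M)|$ chambers) rather than your type-uniqueness plus braid-move argument; these are two sides of the same coin. One cosmetic slip: you write ``insert backtracks'' to pass to a minimal gallery, but of course you mean the opposite --- bring the type to $f_1iif_2$ form and then \emph{remove} the stammer --- which you say correctly later in the proposal.
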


\begin{proof} Since any building of rank at least three is simply
connected, $\Gamma$ and $\Gamma'$ are homotopic. The remaining part
of our claim is proved by induction on $k$, being trivially true for
$k=1$.

If $\Gamma $ and $\Gamma '$ are both minimal, the claim is a direct
consequence of the Convexity Property in \ref{buildingprop}.
Similarly, if $\Gamma = \Gamma_1\Gamma_0\Gamma_2$ and $\Gamma' =
\Gamma_1\Gamma'_0\Gamma_2$, where $\Gamma_0$ and $\Gamma_0'$ are
minimal (e.g., when there is a strict elementary homotopy from
$\Gamma$ to $\Gamma'$). In particular, by induction it suffices to
prove that a nonminimal $\Gamma$ is strictly homotopic to a
$\Gamma'$ via an a priory bounded number of strict elementary
homotopies and $\Gamma' $ is homotopic to a shorter gallery within a
uniformly bounded number of chambers.

Suppose $\Gamma$ is not minimal of type $f$. We claim that $\Gamma$
is strictly homotopic to a gallery $\Gamma'$ of type $f_1iif_2$
through at most $ \ell ^\ell$ strictly elementary homotopies, where
$\ell=|I|^k$. Indeed, the number of words of length at most $k$ is
bounded above by $|I|^k$. Therefore, the noncircuit operations from
a word of length at most $k$ to another one of length at most $k$ is
bounded above by $\ell ^\ell$.

Now,  a gallery $\Gamma' $ of type $f_1iif_2$ from $x$ to $y$ is
obviously homotopic to a shorter gallery of type either $f_1if_2$ or
type $f_1f_2$, according to the chambers being $\Gamma
_1C_1C_2C_3\Gamma_2$ (where $C_1\sim _i C_2$, and $C_2\sim _i C_3$)
or $\Gamma _1C_1C_2C_1\Gamma_2$ (where $C_1\sim _i C_2$). Moreover,
the homotopy can be realized in the longer gallery and so the number
of chambers is bounded by the length $k$.
\end{proof}

\begin{rem}
The proof of the above lemma gives an algorithm to construct a
\emph{controlled homotopy} between galleries with the same
extremities in a building.
\end{rem}

\begin{prop}\label{topology}
With the chamber topology, $\tilde {\mathscr{C}}$ is a compact,
separable and metrizable space.
 \end{prop}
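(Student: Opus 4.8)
The strategy is to show that the chamber topology on $\tilde{\mathscr{C}}$ is induced by a genuine metric, arising as a quotient-type construction from the Hausdorff metric on compact subsets of $M$, and then invoke compactness of the relevant spaces of galleries. First I would set up notation: fix the basepoint chamber $\tilde C_0$ and the integer $k \ge \tfrac12|\W(\M)|$, so that by the building property every chamber of $\tilde{\mathscr{C}}$ is reached from $\tilde C_0$ by a (possibly stuttering) gallery of length $\le k$. Let $\mathcal{G}_k(M)$ denote the set of stuttering galleries of length $k$ in $\mathscr{C}(M,\G)$ starting at $C_0 = p(\tilde C_0)$; since each chamber is a fixed compact subset of $M$ and a gallery is just a $(k{+}1)$-tuple of chambers, $\mathcal{G}_k(M)$ sits inside the $(k{+}1)$-fold product of the space of compact subsets of $M$ with the Hausdorff metric, and is closed there (the adjacency conditions are closed conditions, using that chambers vary in a compact family parametrized by $\G/\H$ and that face-sharing is closed). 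Hence $\mathcal{G}_k(M)$ is a compact metric space, and so is its counterpart $\tilde{\mathcal{G}}_k$ of length-$k$ stuttering galleries in $\tilde{\mathscr{C}}$ starting at $\tilde C_0$; the covering map $p$ induces a continuous surjection $\tilde{\mathcal{G}}_k \to \mathcal{G}_k(M)$, and the endpoint map $\tilde{\mathcal{G}}_k \to \tilde{\mathscr{C}}$ is a surjection because of the choice of $k$.

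Next I would define a pseudometric $d$ on $\tilde{\mathscr{C}}$ by
$$
d(\tilde C, \tilde C') = \inf \big\{\, d_H\big(p(\Gamma), p(\Gamma')\big) \;:\; \Gamma, \Gamma' \in \tilde{\mathcal{G}}_k,\ \Gamma \text{ ends at } \tilde C,\ \Gamma' \text{ ends at } \tilde C' \,\big\},
$$
where $d_H$ is the Hausdorff distance on $(k{+}1)$-tuples of compact subsets of $M$ (the maximum of the pointwise Hausdorff distances, say). Symmetry is clear; for the triangle inequality one uses a third chamber with an intermediate gallery and the triangle inequality for $d_H$, choosing near-optimal galleries. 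By construction the balls $B_{\epsilon,k}(\tilde C)$ are exactly the $d$-balls of radius $\epsilon$, so $d$ generates the chamber topology. The key point to verify is that $d$ is a genuine metric, i.e. $d(\tilde C,\tilde C')=0$ implies $\tilde C = \tilde C'$: here I would use the Homotopy Control Lemma \ref{control}. If $d(\tilde C,\tilde C')=0$, take galleries $\Gamma_n, \Gamma'_n$ ending at $\tilde C, \tilde C'$ with $d_H(p(\Gamma_n), p(\Gamma'_n)) \to 0$; by compactness of $\tilde{\mathcal{G}}_k$ pass to subsequences converging to $\Gamma_\infty$ (ending at $\tilde C$) and $\Gamma'_\infty$ (ending at $\tilde C'$), and then $p(\Gamma_\infty) = p(\Gamma'_\infty)$ as galleries in $\mathscr{C}(M,\G)$. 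Since $\tilde{\mathscr{C}} \to \mathscr{C}$ is the universal cover, two galleries in $\tilde{\mathscr{C}}$ starting at the same chamber and mapping to the same gallery downstairs must coincide (by the unique lifting of galleries to the cover), so $\tilde C = \tilde C'$. Thus $d$ is a metric and $\tilde{\mathscr{C}}$ is metrizable, hence separable once compactness is established.

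Finally, compactness: the endpoint map $\mathrm{end}\colon \tilde{\mathcal{G}}_k \to (\tilde{\mathscr{C}}, d)$ is surjective, and it is continuous and in fact $1$-Lipschitz in the appropriate sense directly from the definition of $d$ (if two galleries are $d_H$-close downstairs, their endpoints are $d$-close); since $\tilde{\mathcal{G}}_k$ is compact, $\tilde{\mathscr{C}} = \mathrm{end}(\tilde{\mathcal{G}}_k)$ is compact, hence also separable and complete. The main obstacle, and the step deserving the most care, is the claim that $d$ separates points — equivalently, that Hausdorff-convergence of the projected galleries forces the lifted galleries to stabilize to a common limit. This is where Lemma \ref{control} and the universal-cover/unique-gallery-lifting property of $\tilde{\mathscr{C}}$ are essential: without them the chamber topology could fail to be Hausdorff, and one must check that the finitely-many-chamber control on homotopies downstairs lifts to control upstairs so that distinct chambers of $\tilde{\mathscr{C}}$ over the same chamber of $\mathscr{C}$ genuinely stay at positive $d$-distance. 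A secondary technical point is verifying that the set $\mathcal{G}_k(M)$ of stuttering galleries is closed (hence compact) in the product of Hausdorff-metric spaces, which rests on the fact that the chambers of $M$ form a compact family (parametrized by $\G/\H$) and that ``$C$ and $C'$ share the $i$-face'' is a closed condition in that family.
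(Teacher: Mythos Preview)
Your overall architecture is sensible and close in spirit to the paper's, but there is a genuine gap: the triangle inequality for your proposed $d$ does not follow from the argument you sketch. Given chambers $\tilde C_1,\tilde C_2,\tilde C_3$, a near-optimal pair $(\Gamma_1,\Gamma_2)$ witnessing $d(\tilde C_1,\tilde C_2)$ and a near-optimal pair $(\Gamma_2',\Gamma_3)$ witnessing $d(\tilde C_2,\tilde C_3)$ will in general involve two \emph{different} galleries $\Gamma_2,\Gamma_2'$ ending at $\tilde C_2$, and there is no reason for $d_H(p(\Gamma_2),p(\Gamma_2'))$ to be small. The Hausdorff triangle inequality therefore gives you nothing about $d_H(p(\Gamma_1),p(\Gamma_3))$. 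This is the standard failure of the ``infimum-over-fiber'' distance to be a pseudometric on a quotient; the correct quotient pseudometric uses chains, but then its balls are no longer the $B_{\epsilon,k}(\tilde C)$ and your identification with the chamber topology breaks. Without the triangle inequality you have neither a metric nor even a guarantee that the sets $B_{\epsilon,k}(\tilde C)$ form a neighborhood \emph{base}, so the metrizability claim collapses.

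The paper sidesteps this entirely by not attempting to write down a metric. Instead it verifies the hypotheses of the Urysohn metrization theorem directly: sequential compactness (your compact gallery space $\mathcal G_k(M)$ plus unique lifting, essentially your argument), separability (countable dense subsets of the face isotropy groups), and regularity. The regularity step is exactly where Lemma~\ref{control} enters, and it plays the role you correctly anticipated for the ``separates points'' step: one assumes two distinct chambers cannot be separated, produces convergent sequences of galleries to each together with convergent controlled homotopies between the auxiliary galleries, and derives a contradiction from the unique homotopy lifting property. Your flagged ``main obstacle'' is thus the right one, but note that even in your sketch the parenthetical ``(ending at $\tilde C$)'' for $\Gamma_\infty$ is precisely what needs the Homotopy Control argument---it is not automatic that the lift of a Hausdorff limit of projected galleries retains the common endpoint. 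If you want to salvage the explicit-metric approach, you would need to first prove a lemma of the form ``if $p(\Gamma_n)\to p(\Gamma_\infty)$ and all $\Gamma_n$ end at $\tilde C$, then so does $\Gamma_\infty$'', which amounts to the paper's regularity argument, and then use that to repair the triangle inequality (or to show the $B_{\epsilon,k}$ form a genuine base). At that point you have essentially reproduced the Urysohn route.
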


\begin{proof}
By the  Uryson Characterization Theorem for metrizable spaces, all
we need to prove is that $\tilde {\mathscr{C}}$ is sequentially
compact, separable, and regular.

$\bullet$ (Sequential Compactness) Any sequence $\{\tilde C_n\}$ of
chambers in $\tilde {\mathscr{C}}$ has a convergent subsequence.

 For each $n$, let $\Gamma_n$ be a gallery of length at most $k$ joining
 $\tilde C_0$ and $\tilde C_n$.
 By compactness of $M$ the sequence $p(\Gamma_n)$ has a
 convergent subsequence in the Hausdorff metric topology with limit a  gallery $\bar{\Gamma} _\infty$ starting at $p(\tilde C_0)$.
  By the unique homotopy lifting property (see \cite{Ro}, Lemma 4.4), $\bar{\Gamma} _\infty$ can be uniquely lifted to a gallery, say $\Gamma _\infty$, starting at $\tilde C_0$.
  By the definition of the chamber
 topology we know that the corresponding subsequence of $\{\tilde C_n\}$ converges to the end chamber of $\Gamma _\infty$.

$\bullet$ (Separability) We may choose a countable dense subset
$Q_i$ of each face isotropy group $\G_i$, e.g.~the rational points.
 The set of galleries starting at $\tilde C_0$ of length at most $k$
obtained by the folding process described in \ref{folding} using
only elements from $Q_i$ is clearly dense in the set of all
galleries starting at $\tilde C_0$ of length at most $k$. By
definition, the last chamber of these lifted galleries in $\tilde
{\mathscr{C}}$
 starting at $\tilde C_0$ form a countable dense set in the chamber topology.

$\bullet$ (Regularity) We need to prove that, for a chamber $\tilde
C_1$ and  a closed subset $B \subset \tilde {\mathscr{C}}$ in the
complement of $\tilde C_1$, there are two disjoint open sets $U$ and
$V$ containing $\tilde C_1$ and $B$ respectively.

If this is not the case, we find for arbitrary large integers $n$, a
chamber $\tilde C'_n \in B_{\frac 1n , k}(\tilde C_1) \cap B_{\frac
1n, k}(B)$. By the above we know that the closed subset $B$ is sequentially
compact. Therefore, a subsequence of $\tilde C'_n$  converges to
some chamber $\tilde C_2\in B$. Therefore, there are two pairs of
sequences of galleries $\Gamma_{i,n}$, $\Gamma'_{i,n}$, $i = 1,2$,
starting at $\tilde C_0$ and ending at the chambers $\tilde C_1$,
$\tilde C_2$, respectively $\tilde C'_n$, with $d_H(p(\Gamma_{i,n}),
p(\Gamma'_{i,n})) < \frac 1n$.
 For each $n$, $\Gamma'_{1,n}$ and
$\Gamma'_{2,n}$ have the same extremities in the building $\tilde
{\mathscr{C}}$, and hence $\Gamma'_{1,n}\simeq \Gamma'_{2,n}$ and
$p(\Gamma'_{1,n})\simeq p(\Gamma'_{2,n})$, by a homotopy $H'_n$. By
Lemma \ref{control} we can assume that $H'_n$ is composed of an a
priory bounded number of chambers independent of $n$.  Taking
convergent subsequences, we can assume that $p(\Gamma_{i,n})$ as
well as $p(\Gamma'_{i,n})$ converge to the same galleries $\bar
\Gamma_{i, \infty}$, and that these are homotopic by a homotopy
$H'_{\infty}$. So on the one hand, by the unique homotopy lifting
property, $\bar \Gamma_{i, \infty}, i = 1,2$  lift to galleries with
the same end chamber in $\tilde {\mathscr{C}}$. On the other hand
they lift to galleries with end chamber $\tilde C_i,  i = 1,2$
respectively. A contradiction.
 \end{proof}

 \begin{lem}[Independence]\label{independence}
 The chamber topology is independent of the choices of $\tilde C_0$ and the parameter $k$.
 \end{lem}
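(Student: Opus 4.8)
The plan is to show that any two chamber topologies—one built from $(\tilde C_0, k)$ and another from $(\tilde C_0', k')$—coincide by proving that each basic open set $B_{\epsilon,k}(\tilde C)$ of the first contains, and is contained in, basic open sets of the second around each of its points. First I would observe that enlarging $k$ does not change the topology: since $\tilde{\mathscr C}$ is a building of type $\M$, any two chambers are joined by a gallery of length at most $\tfrac12|\W(\M)|$, so once $k \ge \tfrac12|\W(\M)|$ the sets $B_{\epsilon,k}(\tilde C)$ stabilize up to refining $\epsilon$; more precisely, for $k' \ge k$ one has trivially $B_{\epsilon,k}(\tilde C) \subset B_{\epsilon,k'}(\tilde C)$, and the reverse inclusion (up to shrinking $\epsilon$) follows from the Homotopy Control Lemma~\ref{control} together with uniform continuity of the folding process (Remark~\ref{folding}) on the compact group $\G$: a gallery of length $k'$ with Hausdorff-close image can be replaced, within a bounded number of chambers, by a homotopic gallery of length $k$ whose image is still close.

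Next I would handle the change of basepoint from $\tilde C_0$ to $\tilde C_0'$. Fix a gallery $\Gamma_*$ in $\tilde{\mathscr C}$ from $\tilde C_0$ to $\tilde C_0'$, projecting to a gallery $p(\Gamma_*)$ in $\mathscr C$. Given any chamber $\tilde C$ and a gallery $\Gamma$ of length $\le k$ from $\tilde C_0$ to $\tilde C$, the concatenation (read backwards along $\Gamma_*$ then forwards along $\Gamma$, after using the building structure to realize the lift) furnishes a gallery from $\tilde C_0'$ to $\tilde C$ of length $\le k + \mathrm{length}(\Gamma_*) =: k''$, and whose projection is Hausdorff-close to that of the original whenever the original projections are close, because prepending a \emph{fixed} gallery $p(\Gamma_*)$ is a uniformly continuous operation on galleries in the Hausdorff metric. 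Hence each $B_{\epsilon,k}(\tilde C)$ based at $\tilde C_0$ is, after adjusting $\epsilon$ slightly and increasing the length parameter to $k''$, comparable to a set $B_{\epsilon',k''}(\tilde C)$ based at $\tilde C_0'$; combined with the first step this gives mutual refinement of the two topologies, hence equality.

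The main obstacle I anticipate is the quantitative bookkeeping: one must ensure that when we replace a gallery by a homotopic or concatenated one, the \emph{number of chambers} stays bounded independently of the sequence in question, so that Hausdorff-closeness of projections is preserved with only a controlled loss in $\epsilon$. This is exactly where Lemma~\ref{control} is essential—it guarantees that homotopies between bounded-length galleries with the same extremities use an a priori bounded number of chambers—and where one uses that $p$ is a covering with the unique homotopy lifting property (cf.\ Ronan, Lemma 4.4), so that closeness downstairs in $\mathscr C$ lifts to the correct identification of end chambers upstairs in $\tilde{\mathscr C}$. Once this uniform control is in hand, the argument is a routine, if slightly tedious, $\epsilon$-chase of the kind already carried out in the proof of Proposition~\ref{topology}.
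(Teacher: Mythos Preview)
Your proposal is correct and follows essentially the same approach as the paper: first establish independence of $k$ via the trivial inclusion $B_{\epsilon,k}(\tilde C)\subset B_{\epsilon,k'}(\tilde C)$ together with the Homotopy Control Lemma~\ref{control} for the reverse direction, then reduce independence of the basepoint to the first step by concatenating with a fixed gallery $\Gamma_*$ from $\tilde C_0$ to $\tilde C_0'$. The paper phrases the $k$-independence step in terms of convergent sequences rather than basic open sets, but this is only a cosmetic difference.
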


 \begin{proof}
 Let us first prove the independence of $k$. If $k' > k$ clearly $B_{\epsilon, k}(\tilde C) \subset B_{\epsilon, k'}(\tilde C)$.  Consequently it suffices to show that a $k'$-convergent
 sequence of chambers $\{\tilde C_n\}$ is also $k$-convergent. By assumption there are galleries $\Gamma_{n}$ and $\Gamma^{n}$ in $\tilde {\mathscr{C}}$
  of length at most $k'$ starting at $\tilde C_0$ and ending at $\tilde C_n$ respectively $\tilde C$ such that the projected galleries $p(\Gamma_n)$ and $p(\Gamma^n)$ Hausdorff
converge to a gallery $\bar \Gamma_\infty$ (possibly stuttering) in
${\mathscr{C}}$. Again using Lemma \ref{control} we see that the
gallery $\Gamma_n$ is homotopic to a gallery $\Gamma'_n$ of length
at most $k$ by a homotopy $H_n$ with an a priory bounded number of
chambers. Note that $p(\Gamma _n')$ subsequentially converges to a
gallery $\bar \Gamma _\infty '= p(\Gamma _\infty ')$, where $\Gamma
_\infty '$ is the subsequence limit of $\Gamma'_n$. We may assume
the homotopies $p(H_n)$ also converge, and therefore we get a limit
homotopy between the two limit galleries $\bar \Gamma _\infty$ and
$\bar \Gamma _\infty'$. By the homotopy uniqueness lifting property
once again we get that $\Gamma _\infty$ and $\Gamma _\infty'$ have
the same ending chambers $\tilde C$. Therefore, $\{\tilde C_n\}$
also $k$-converges to $\tilde C$.

 To see the independence of the choice of $\tilde C_0$ join another chamber $\tilde C'_0$ to $\tilde C_0$ with a fixed gallery $\Gamma_0$, and the claim follows from independence
 of $k$ via concatenation with $\Gamma_0$ and its opposite.
 \end{proof}

We will now investigate the topology induced on the set of vertices
from the chamber topology. That topology in turn will induce a
topology on the geometric realization $|\tilde {\mathscr{S}}(M,\G)|$, of the simplicial complex  $\tilde {\mathscr{S}}(M,\G)$
associated to the building referred to as the \emph{thick} topology on  $\tilde {\mathscr{C}}$ from now on.
Assuming our chamber system $\tilde {\mathscr{C}}$ has rank $k+1$
corresponding to cohomogeneity $k$, for any $i\in I= \{0, \ldots,
k\}$,  consider the set $\tilde V_i$ of cotype $i$ vertices in
$\tilde {\mathscr{C}}$. Let $\pi _i: \tilde {\mathscr{C}}\to \tilde
V_i$ denote the obvious projection map. For each $i$, we equip
$\tilde V_i$ with the quotient topology.

\begin{lem}[Vertex Space]\label{vertices}
For any $i\in I$, the projection $\pi _i: \tilde {\mathscr{C}}\to
\tilde V_i$ is an open map, and $\tilde V_i$ is compact and
Hausdorff. Moreover, for any $x \in \tilde V_i$, the fiber $\pi
_i^{-1}(x) \subset  \tilde {\mathscr{C}}$ is the residue $\mathrm{Res}(x)$
in $\tilde {\mathscr{C}}$, which is compact, and the restriction of
the covering map $p: \tilde {\mathscr{C}}\to {\mathscr{C}}$ to this
residue is a homeomorphism to the residue $\mathrm{Res}(p(x))$ in
${\mathscr{C}}$.
\end{lem}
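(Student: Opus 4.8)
I would first dispose of the combinatorial content and then treat the topology. By definition $\pi_i$ sends a chamber of $\tilde{\mathscr C}$ to its cotype-$i$ vertex, and two chambers carry the same cotype-$i$ vertex exactly when they lie in one common $J$-residue with $J=I\setminus\{i\}$. Hence $\pi_i^{-1}(x)=\mathrm{Res}(x)$, the corank-one residue through $x$; note $|J|=k\ge 2$ since the rank $k+1$ is at least $3$. The map $p$ carries $\mathrm{Res}(x)$ onto $\mathrm{Res}(p(x))$ — onto, by unique lifting of $J$-galleries — and restricts there to a covering of chamber systems. Since coverings are isomorphisms on residues of rank $\le 2$ (\cite{Ro}), this settles $|J|=2$, while for $|J|\ge 3$ the residue $\mathrm{Res}(p(x))$ is a spherical building of rank $\ge 3$ (every proper residue of $\mathscr C(M,\G)$ being a spherical building, as the slice representations are polar), hence simply connected (\cite{Ti2}), so the covering $p|_{\mathrm{Res}(x)}$ of it is again an isomorphism. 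In all cases $p|_{\mathrm{Res}(x)}$ is a continuous bijection onto $\mathrm{Res}(p(x))\subset\mathscr C$.

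\textbf{Compactness of $\mathrm{Res}(x)$ and the homeomorphism.} Next I would show $\mathrm{Res}(x)$ is closed in $\tilde{\mathscr C}$, hence compact by \pref{topology}. Fix a chamber $\tilde C\ni x$ as basepoint (allowed by \lref{independence}). Any chamber of $\mathrm{Res}(x)$ is joined to $\tilde C$ by a $J$-gallery of length at most $L:=\tfrac12|\W(\M_J)|$, as $\mathrm{Res}(p(x))$ is a spherical building of type $\M_J$. Given $\tilde D_n\in\mathrm{Res}(x)$, project such galleries to $\mathscr C\subset M$ and extract, by compactness of $M$, a Hausdorff-convergent subsequence; the limit is still a $J$-gallery since $J$-adjacency of chambers is preserved under Hausdorff limits in $M$ (the faces of a chamber depend continuously on it, the set of chambers being $\G/\H$ with its natural topology). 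Lifting this limit gallery from $\tilde C$ and invoking the definition of the chamber topology, the corresponding subsequence of $\tilde D_n$ converges to the end chamber of the lift, which lies in $\mathrm{Res}(x)$. Thus $\mathrm{Res}(x)$ is closed, hence compact; since $p$ is continuous (endpoints of Hausdorff-close galleries are Hausdorff-close), the continuous bijection $p|_{\mathrm{Res}(x)}$ from the compact space $\mathrm{Res}(x)$ onto the Hausdorff space $\mathrm{Res}(p(x))$ is a homeomorphism.

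\textbf{The space $\tilde V_i$.} Compactness of $\tilde V_i=\pi_i(\tilde{\mathscr C})$ is immediate. For openness of $\pi_i$ it suffices that the $\pi_i$-saturation $\bigcup_{\tilde C\in U}\mathrm{Res}(\tilde C)$ of an open set $U$ be open: given $\tilde C'\in B_{\epsilon,k}(\tilde C)$ and $\tilde D\in\mathrm{Res}(\tilde C)$, I would transport a length-$\le L$ $J$-gallery from $\tilde C$ to $\tilde D$ over to $\tilde C'$ by applying the same folding sequence of face-isotropy elements. Since $\G$ is compact its action on $M$ is uniformly continuous, so the transported gallery is Hausdorff-within $\epsilon'(\epsilon)\to 0$ of the original, and concatenating with the galleries witnessing $\tilde C'\in B_{\epsilon,k}(\tilde C)$ (absorbing the extra length $L$ into $k$ via \lref{independence}) produces $\tilde D'\in\mathrm{Res}(\tilde C')$ with $\tilde D'\in B_{\epsilon',k}(\tilde D)$; hence the saturation is open and $\pi_i$ is an open map. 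Finally $\tilde V_i$ is Hausdorff because the graph $\{(\tilde C,\tilde C'):\tilde C'\in\mathrm{Res}(\tilde C)\}$ of the defining equivalence relation is closed in $\tilde{\mathscr C}\times\tilde{\mathscr C}$ — by the same compactness-and-unique-lifting argument as for closedness of $\mathrm{Res}(x)$ — and an open quotient by a closed equivalence relation is Hausdorff.

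\textbf{Main obstacle.} The identification of the fiber is formal; the analytic content is the openness of $\pi_i$ and the closedness of the relation defining $\tilde V_i$. Both reduce to transporting bounded-length galleries inside a residue between chambers that are close in the chamber topology while controlling the Hausdorff error via uniform continuity of the compact group action — the same mechanism as in the proofs of \pref{topology} and \lref{independence}, which is where I expect the bookkeeping to be heaviest.
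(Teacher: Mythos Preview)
Your proposal is correct and follows essentially the same route as the paper. The paper also proves closedness of $\mathrm{Res}(x)$ by projecting bounded-length $J$-galleries, passing to a Hausdorff-convergent subsequence in $M$, and lifting; it also establishes Hausdorffness of $\tilde V_i$ via closedness of the cotype-$i$ adjacency relation together with openness of $\pi_i$; and its openness argument is exactly your transport-by-folding mechanism, phrased as ``any chamber Hausdorff close to $C'=p(\tilde C')$ is $\g C'$ for $\g$ close to $1$, and $\g\Gamma$ is close to $\Gamma$.'' The only cosmetic differences are that the paper asserts the residue bijection simply ``by construction of $\tilde{\mathscr C}$'' rather than your case split on $|J|$, and it concatenates with a fixed gallery from the original basepoint $\tilde C_0$ rather than invoking the Independence Lemma to move the basepoint into the residue.
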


\begin{proof}
We begin with a proof of the last claim. By construction of $\tilde
{\mathscr{C}}$, $p$ provides an isomorphism between the residues as
subbuildings. We need to show that the chamber topology restricted
to the residue Res$(x)$ coincides with the Hausdorff topology of
Res$(p(x))$ in the manifold $M$.

Since $\tilde {\mathscr{C}}$ and ${\mathscr{C}}$ are both compact
and Hausdorff, and $p:  \tilde {\mathscr{C}}\to {\mathscr{C}}$
obviously is continuous, it remains to check that Res($x$) is closed
in $ \tilde {\mathscr{C}}$. Let $\{\tilde C_n\}, n = 1,2, \ldots$ be
a sequence of chambers in Res($x$) which converges in $ \tilde
{\mathscr{C}}$. Join a fixed chamber $\tilde C_0$ to $\tilde C_1$ by
a gallery $\Gamma$. Using that the residues are buildings, join each
$\tilde C_1$ to $\tilde C_n$ by a minimal gallery $\Gamma_n$ within
the residue. A subsequence of the projections to ${\mathscr{C}}$ of
the concatenated galleries clearly converges in the Hausdorff
topology, and the end chamber of the lift of the limiting gallery is
the limit of $\{\tilde C_n\}$, which as a consequence is in the
residue.

To show that $\tilde V_i$ is Hausdorff it suffices to show that $\pi
_i:  \tilde {\mathscr{C}} \to \tilde V_i$ is an open map and the
cotype $i$-adjacency is a closed relation, i.e. the subset
$$\{(\tilde C, \tilde C')\in  \tilde {\mathscr{C}}\times   \tilde
{\mathscr{C}}: \tilde C \text{ and } \tilde C' \text{ have common
cotype $i$ vertices}\}$$ is closed in the product topology. To show
the latter, let $(\tilde C_n, \tilde C_n')$ be a sequence converging
to $(\tilde C, \tilde C')$, where $\pi_i(\tilde C_n) = \pi_i(\tilde
C'_n)$. In particular $C_n$ and $C_n'$ share an $i$-vertex, and
$(C_n, C'_n)$ converges to $(C, C')$ in the Hausdorff topology of
$M$. Join $\tilde C_n$ to $\tilde C_n'$ by a minimal gallery
$\Gamma^i_n$ in the $i$-residue and pick a subsequence if necessary
so that the image galleries $p(\Gamma^i_n)$ in the residues in $M$
converge. Obviously the limit gallery joins $C$ to $C'$, and, in
particular, they share an $i$ vertex. It follows that $\tilde C$ and
$\tilde C'$ share the type $i$ vertex.

Let us prove that $\pi _i$ is open. For this we need to see that
$\pi _i^{-1}(\pi _i(U))$ is open in $\tilde {\mathscr{C}} $, where
$U$ is a finite intersection of $B_{\epsilon_i, k}(\tilde C_i)$'s.
Pick a chamber $\tilde D' \in \pi _i^{-1}(\pi _i(U))$, i.e. $\pi
_i(\tilde D') = \pi _i(\tilde C')$ for some $\tilde C' \in U$. We
need to find a neighborhood $U'$ of $\tilde D'$ so that for any
$\tilde D'' \in U'$ there is a $\tilde C'' \in U$ with $\pi _i(D'')
= \pi _i(C'')$.

Let $V$ be a finite intersection of $B_{\epsilon_j, k}(\tilde
C_j)$'s so that $\tilde C' \in V \subset U$. Since $\tilde C'$ and
$\tilde D'$ are cotype $i$ adjacent, they are in the same cotype $i$
residue (of some vertex), and they can be joined within this residue
by a gallery $\Gamma$ explicitly obtained by folding (see
\ref{folding})  $\tilde C'$ repeatedly along faces using face
isotropy groups (fixing the cotype $i$ vertex) in $M$ via $p :\tilde
{\mathscr{C}}\to \mathscr {C}$. To complete the proof the following
observation suffices: Consider the chamber $C' = p(\tilde C')$ in
$\mathscr {C}$. Any chamber Hausdorff close to $C'$ is $\g C'$ for
some $\g \in \G$ close to $1 \in \G$, and $\g\Gamma$ is thus close
to $\Gamma$.  Thus this process and its inverse takes a neighborhood
of $\tilde C'$ to a neighborhood $\tilde D'$ and conversely, and
 the claim follows.
 \end{proof}

Now we are ready to prove the first of our main results in this
section.

\begin{thm}[Compact Spherical Building]\label{top build}
The spherical building $\tilde {\mathscr{C}}(M,\G)$
with the topology on the set of vertices induced by the
 thick topology on the chambers is a compact spherical building if its
 rank is at least $3$.
 \end{thm}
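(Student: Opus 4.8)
The plan is to check the two defining conditions for $\tilde{\mathscr{C}}(M,\G)$ to be a compact spherical building in the sense of \cite{BSp}: first, that each vertex set $\tilde V_i$ carries a Hausdorff (in fact compact) topology; and second, that for every cotype $(i_1,\dots,i_{r+1})$ the set $\tilde{\mathscr{C}}_{i_1,\dots,i_{r+1}}$ of simplices of that type is closed in the product $\tilde V_{i_1}\times\cdots\times\tilde V_{i_{r+1}}$. The first point is precisely the content of the Vertex Space Lemma \ref{vertices}: each projection $\pi_i\colon\tilde{\mathscr{C}}\to\tilde V_i$ is open, $\tilde V_i$ is Hausdorff, and $\tilde V_i$ is compact since $\pi_i$ is a continuous surjection from the space $\tilde{\mathscr{C}}$, which is compact by Proposition \ref{topology}; moreover the fibre of $\pi_i$ over a vertex is exactly its cotype-$i$ residue. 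Note that the standing hypothesis that $\tilde{\mathscr{C}}$ has rank at least $3$ is built into Proposition \ref{topology} and Lemma \ref{vertices} through the Homotopy Control Lemma \ref{control}, so it is automatically in force here.

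For the second point I would argue with sequences, which is legitimate because all the spaces involved are metrizable by Proposition \ref{topology}. Fix a type $(i_1,\dots,i_{r+1})$ and a sequence of simplices $s_n=(x_1^n,\dots,x_{r+1}^n)\in\tilde{\mathscr{C}}_{i_1,\dots,i_{r+1}}$ with $x_j^n\to x_j$ in $\tilde V_{i_j}$ for each $j$. By definition of a simplex in a building, the vertices $x_1^n,\dots,x_{r+1}^n$ are mutually incident, hence they lie in a common nonempty residue; choose a chamber $\tilde C_n$ in that residue, so that $\pi_{i_j}(\tilde C_n)=x_j^n$ for every $j$. Since $\tilde{\mathscr{C}}$ is compact (Proposition \ref{topology}), after passing to a subsequence we may assume $\tilde C_n\to\tilde C$ in the chamber topology. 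Each $\pi_{i_j}$ is continuous, being the quotient map onto $\tilde V_{i_j}$, so $\pi_{i_j}(\tilde C_n)\to\pi_{i_j}(\tilde C)$; but $\pi_{i_j}(\tilde C_n)=x_j^n\to x_j$, and $\tilde V_{i_j}$ is Hausdorff, so $\pi_{i_j}(\tilde C)=x_j$. Thus the single chamber $\tilde C$ contains all of $x_1,\dots,x_{r+1}$, so these vertices span a simplex of type $(i_1,\dots,i_{r+1})$, i.e. $(x_1,\dots,x_{r+1})\in\tilde{\mathscr{C}}_{i_1,\dots,i_{r+1}}$. This proves the required closedness.

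I would close by recording the compatibility statement: combining the two points, the map $\tilde{\mathscr{C}}\to\tilde V_1\times\cdots\times\tilde V_{k+1}$ sending $\tilde C$ to $(\pi_1(\tilde C),\dots,\pi_{k+1}(\tilde C))$ is a continuous injection (a chamber of a building, being a maximal simplex of a simplicial complex, is determined by its set of vertices) from a compact space into a Hausdorff space, hence a homeomorphism onto its image $\tilde{\mathscr{C}}_{1,\dots,k+1}$. Therefore the topology induced on the chambers by the vertex topology (the \emph{thick} topology) coincides with the chamber topology we started from, so the building structure is topologized as required.

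The main obstacle is not in the present argument, which is short once Lemma \ref{control}, Proposition \ref{topology} and Lemma \ref{vertices} are available; it lies in that preparatory work, namely establishing that the chamber topology is compact and metrizable and that $\pi_i$ is open with compact, closed residue fibres. Given those inputs, the heart of the matter is simply the observation that a convergent sequence of simplices lifts to a convergent subsequence of chambers containing them, with compactness of $\tilde{\mathscr{C}}$ providing the limiting chamber and Hausdorffness of the $\tilde V_i$ pinning down its vertices.
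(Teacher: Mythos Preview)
Your proof is correct and follows essentially the same approach as the paper: both rely on Lemma \ref{vertices} for the Hausdorff property of the vertex spaces and then deduce closedness of $\tilde{\mathscr{C}}_{i_1,\dots,i_{r+1}}$ from compactness of $\tilde{\mathscr{C}}$ (Proposition \ref{topology}) via the continuous product map $\prod\pi_{i_j}$. You spell out sequentially what the paper compresses into the single observation that a continuous image of a compact space in a Hausdorff space is closed; your additional remark that the vertex-induced topology on chambers agrees with the original chamber topology is a useful compatibility check not made explicit in the paper.
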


 \begin{proof} We have seen in Lemma \ref{vertices} that the space
 $V_1\cup\cdots\cup  V_{k+1}$
 of vertices
 is Hausdorff.
 It is therefore left to show that
 the set $\tilde {\mathscr{C}}_{i_1,\dots, i_{r+1}}$ of all  simplices of type
 $(i_1,\dots, i_{r+1})$ is closed in the product  $\tilde V_{i_1}\times\cdots\times \tilde V_{i_{r+1}}$.
 It follows from Proposition \ref{topology} and Lemma
\ref{vertices} that  the product map $\prod \pi _{i_j}:
\tilde {\mathscr{C}}_{i_1,\dots, i_{r+1}}\to\tilde V_{i_1}\times\cdots\times \tilde V_{i_{r+1}}$
is continuous for any
multi-index $i_1,\dots, i_{r+1}$
 and
its image is a closed subset, which finishes the proof.
 \end{proof}

 It is clear from what we have proved so far that the compact spherical building in Theorem \ref{top build}
 is an infinite compact metrizable building. We can now apply the main results of \cite{BSp} or
 rather its generalization in \cite{GKMW} to compact spherical buildings that need not be
 locally connected.

\begin{thm}[Classical Building]\label{class build}
Assume the compact spherical building $\tilde {\mathscr{C}}(M,\G)$
has rank at least $3$ and its associated Coxeter diagram has no
isolated nodes. Then it is the building at infinity of a product $N$ of
irreducible symmetric spaces of noncompact type of rank at least
$2$. The topological automorphism
group $\mathrm{Aut}_\mathrm{top}(\tilde {\mathscr{C}})$ of the
building $\tilde {\mathscr{C}}(M,\G)$ is a real noncompact
semisimple Lie group with finitely many connected components and its
identity component is isomorphic to the identity component of the
isometry group of the symmetric space $N$.
 \end{thm}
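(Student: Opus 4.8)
The plan is to split $\tilde{\mathscr C}:=\tilde{\mathscr C}(M,\G)$ into the irreducible factors dictated by its Coxeter diagram, apply the classification of compact spherical buildings of Burns--Spatzier \cite{BSp}, in the extended form of Grundh\"ofer--Kramer--Van Maldeghem--Weiss \cite{GKMW}, to each factor, and then reassemble. Write $\M=\M^{(1)}\oplus\cdots\oplus\M^{(s)}$ for the decomposition of the Coxeter matrix into the blocks given by the connected components of the diagram. Since $\tilde{\mathscr C}$ is a building, its reducible type forces it to be the join $\tilde{\mathscr C}=\tilde{\mathscr C}^{(1)}\ast\cdots\ast\tilde{\mathscr C}^{(s)}$ of buildings $\tilde{\mathscr C}^{(a)}$ of types $\M^{(a)}$: a chamber is a tuple of chambers, one from each factor, and $i$-adjacency only alters the coordinate whose index lies in the block containing $i$. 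By hypothesis $\M$ has no isolated nodes, so each block has at least two nodes and each $\tilde{\mathscr C}^{(a)}$ is irreducible of rank at least two.

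First I would check that the compact metrizable topology on $\tilde{\mathscr C}$ produced in \pref{topology} and \tref{top build} is the product of the corresponding topologies on the factors. The cotype-$i$ vertex space $\tilde V_i$ depends only on the block containing the node $i$, and the closedness of the simplex sets $\tilde{\mathscr C}_{i_1,\dots,i_{r+1}}$ inside the relevant product of vertex spaces respects the block decomposition of the index set; hence each $\tilde{\mathscr C}^{(a)}$, with the induced topology, is again a compact, infinite, metrizable spherical building in the sense of \cite{BSp}, and $\tilde{\mathscr C}=\prod_a\tilde{\mathscr C}^{(a)}$ as topological buildings.

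Next I would verify the remaining hypotheses of \cite{BSp, GKMW} for each irreducible factor; the crux is that every panel is a connected, infinite space. By \rref{folding} and the rank-one case of \lref{vertices}, an $i$-panel of $\mathscr C(M,\G)$ --- hence of $\tilde{\mathscr C}$ and of the pertinent $\tilde{\mathscr C}^{(a)}$ --- is homeomorphic to the unit normal sphere $\Sph^{d}$ of a singular $\G$-orbit, with $d\ge 1$ (singular orbits exist by \lref{sing}, and in a polar manifold the orbits corresponding to codimension-one faces of $M^{*}$ have normal spheres of dimension at least one). Thus every panel is infinite, so each $\tilde{\mathscr C}^{(a)}$ is thick; and every panel is connected, so, running along a gallery, any two chambers of $\tilde{\mathscr C}^{(a)}$ lie in a finite chain of pairwise-overlapping connected panels, whence $\tilde{\mathscr C}^{(a)}$ is connected. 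This places us in the connected alternative of the classification in \cite{GKMW} (the totally disconnected alternative, yielding buildings over non-archimedean local fields, is thereby excluded). Insofar as a topological Moufang condition is needed as an input, it is automatic here: every rank-two residue of $\tilde{\mathscr C}$ is, via \lref{vertices} and \rref{polar rep}, a classical compact building --- the building at infinity of a symmetric space attached to a cohomogeneity-two polar, hence (Dadok \cite{Da}) isotropy, slice representation --- while every irreducible factor of rank at least three is abstractly Moufang by Tits' classification of thick spherical buildings \cite{Ti1}, the root group actions being automatically continuous in the compact setting.

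Finally I would invoke \cite{BSp, GKMW} factorwise: $\tilde{\mathscr C}^{(a)}$ is the spherical building at infinity of an irreducible symmetric space $N_{a}$ of noncompact type of rank $=\operatorname{rank}\tilde{\mathscr C}^{(a)}\ge 2$, and $\mathrm{Aut}_{\mathrm{top}}(\tilde{\mathscr C}^{(a)})$ is a real noncompact semisimple Lie group with finitely many components whose identity component coincides with that of the isometry group $\operatorname{Isom}(N_{a})$. Putting $N:=N_{1}\times\cdots\times N_{s}$, its building at infinity is exactly the join $\tilde{\mathscr C}^{(1)}\ast\cdots\ast\tilde{\mathscr C}^{(s)}=\tilde{\mathscr C}$. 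An automorphism of a join permutes isomorphic factors and acts on each, so $\mathrm{Aut}_{\mathrm{top}}(\tilde{\mathscr C})=\bigl(\prod_{a}\mathrm{Aut}_{\mathrm{top}}(\tilde{\mathscr C}^{(a)})\bigr)\rtimes\Gamma_{0}$ for a finite permutation group $\Gamma_{0}$; this is a real noncompact semisimple Lie group with finitely many connected components whose identity component is $\prod_{a}\operatorname{Isom}(N_{a})^{0}=\operatorname{Isom}(N)^{0}$, as asserted. I expect the main difficulty to be the topological bookkeeping rather than any new geometric idea: one must ensure that the irreducible factors --- and the rank-two residues that feed the Moufang condition into \cite{BSp, GKMW} --- inherit genuinely compact, metrizable topologies with closed simplex sets, and that $\tilde{\mathscr C}$ is topologically connected, not merely gallery-connected, so that the Riemannian-symmetric-space branch of the classification rather than the non-archimedean one applies; the hard algebraic content --- compactness forcing the coordinatizing field to be $\R$, $\C$ or $\QH$ and realizing the building at infinity of a symmetric space --- is supplied wholesale by \cite{BSp, GKMW}.
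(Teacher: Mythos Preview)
Your argument is correct, but the paper's proof takes a more direct route. Rather than decomposing $\tilde{\mathscr C}$ into its irreducible factors and applying the classification factorwise, the paper applies Theorem~1.2 of \cite{GKMW} to the whole building at once: that theorem already delivers the conclusion that a compact metric spherical building (of rank at least three, with no isolated nodes) is the building at infinity of a product of irreducible symmetric spaces of rank $\ge 2$ and, possibly, a locally finite Bruhat--Tits building. The only work left is to exclude the Bruhat--Tits contribution, which the paper does in one stroke by observing (via \lref{vertices}) that the vertex residues are locally connected, whereas the building at infinity of a Bruhat--Tits building is totally disconnected. The statement about $\mathrm{Aut}_{\mathrm{top}}$ is then quoted from \cite{BSp}.

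Your approach buys explicitness: you spell out why panels are connected spheres, why the Moufang condition holds (automatic for rank $\ge 3$ by Tits, and for rank-$2$ factors because they arise as residues, hence as buildings of polar slice representations classified by Dadok), and how the topology and automorphism group behave under the join. The cost is the topological bookkeeping you yourself flag---verifying that the factor topologies are compact metrizable with closed simplex sets and that the chamber topology is the product topology. None of this is wrong, but it is work that \cite{GKMW} has already absorbed; the paper simply cashes that in.
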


  \begin{proof}
   It follows from Theorem 1.2 in \cite{GKMW} that $\tilde {\mathscr{C}}(M,\G)$ is the building
   at infinity of a product of irreducible symmetric spaces of rank at least $2$ and a locally
   finite Bruhat-Tits building of dimension at least two. The building at infinity of the
   Bruhat-Tits building is totally disconnected and can therefore be excluded since by
  Lemma \ref{vertices} the vertex residues are locally connected compact spherical buildings.
The claims about $\mathrm{Aut}_\mathrm{top}(\tilde {\mathscr{C}})$ follow from \cite{BSp}.
 \end{proof}

 We now prove a general theorem about the lifted $\tilde \G$ action and the free subaction of $\pi$ (cf. remark \ref{free}) on the simplicial complex $\tilde {\mathscr{C}}(M,\G)$ when equipped with the thick topology

 \begin{thm} [Compact Transformation Group]\label{comp group}
 Assume the spherical building $\tilde {\mathscr{C}}(M,\G)$ has rank at least
$3$ and is equipped with the thick topology. Then the deck
transformation group $\pi$ with the compact open topology is a
compact subgroup of the topological automorphism group ${\rm
Aut}_{\rm top}(\tilde {\mathscr{C}})$. Moreover, there is a compact
subgroup $ \tilde{\G}$ of ${\rm Aut}_{\rm top}(\tilde
{\mathscr{C}})$, such that $\pi\subset \tilde \G$ is a normal
subgroup with quotient $\tilde \G/\pi=\G$, whose action covers the
$\G$-action on ${\mathscr{C}}$.
\end{thm}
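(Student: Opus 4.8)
The plan is to identify $\pi$ (the deck transformation group of the covering $p\colon\tilde{\mathscr{C}}\to\mathscr{C}$) with a subgroup of $\mathrm{Aut}_{\mathrm{top}}(\tilde{\mathscr{C}})$, and then enlarge it to a group $\tilde\G$ that covers the $\G$-action. First I would observe that $\pi$ acts on $\tilde{\mathscr{C}}$ by chamber-system automorphisms, since it is the deck group of a covering of chamber systems and such covers are defined combinatorially (adjacency-preserving). The key point is that these automorphisms are continuous for the chamber (thick) topology: this follows from the explicit description of the topology. Indeed, an element $\phi\in\pi$ sends the homotopy class $[\Gamma]=[C_0,\dots,C_m]$ of a gallery starting at $\tilde C_0$ to a homotopy class of a gallery starting at $\phi(\tilde C_0)$; by Lemma \ref{independence} the chamber topology is independent of the base chamber, and $\phi$ preserves the projected galleries $p(\Gamma)$ up to precomposition with the deck action, which projects trivially — concretely, $p\circ\phi=p$, so $\phi$ maps the basic set $B_{\epsilon,k}(\tilde C)$ (defined via Hausdorff distance of projected galleries $p(\Gamma),p(\Gamma')$ in $M$) to the corresponding set based at $\phi(\tilde C_0)$, hence is a homeomorphism. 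Thus $\pi\subset\mathrm{Aut}_{\mathrm{top}}(\tilde{\mathscr{C}})$ as an abstract subgroup.

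Next I would show $\pi$ is \emph{closed} in $\mathrm{Aut}_{\mathrm{top}}(\tilde{\mathscr{C}})$ with the compact-open topology. Since $\tilde{\mathscr{C}}$ is compact metrizable (Proposition \ref{topology}), the compact-open topology on $\mathrm{Aut}_{\mathrm{top}}(\tilde{\mathscr{C}})$ agrees with the topology of uniform convergence. The subgroup $\pi$ is precisely the set of those automorphisms $\phi$ satisfying $p\circ\phi=p$; equivalently, $\phi$ permutes the fibers of $p$ trivially on the base. Because $p$ is continuous and the condition $p\circ\phi=p$ is a closed condition under uniform convergence (if $\phi_n\to\phi$ uniformly and $p\circ\phi_n=p$ then $p\circ\phi=p$ by continuity of $p$), $\pi$ is closed. (One also uses that $p$ is a genuine covering of chamber systems so that the stabilizer of a chamber in $\pi$ is trivial, giving discreteness if desired, but closedness is all that is claimed.)

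For the second assertion, I would define $\tilde\G$ as the group of all $\psi\in\mathrm{Aut}_{\mathrm{top}}(\tilde{\mathscr{C}})$ that descend to $\mathscr{C}$, i.e.\ for which there exists $g\in\G$ (acting on $\mathscr{C}=\mathscr{C}(M,\G)=\bigcup_{g}gC$) with $p\circ\psi=g\circ p$. The assignment $\psi\mapsto g$ is a well-defined group homomorphism $\tilde\G\to\G$ because $p$ is surjective; its kernel is exactly $\pi$, so $\pi\trianglelefteq\tilde\G$. Surjectivity onto $\G$ requires the lifting property of the universal cover: every automorphism $g$ of the chamber system $\mathscr{C}$ (in particular every element of $\G$, which acts by label-preserving chamber-system automorphisms by the setup in Section \ref{chambersystem}) lifts to an automorphism of the universal cover $\tilde{\mathscr{C}}$, uniquely up to composition with $\pi$ — this is the standard universal-cover lifting criterion in the category of chamber systems, and lifts are automatically continuous by the same homotopy-class argument as above. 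Hence $\tilde\G/\pi\cong\G$ and, by construction, the $\tilde\G$-action on $\tilde{\mathscr{C}}$ covers the $\G$-action on $\mathscr{C}$. Finally $\tilde\G$ is closed in $\mathrm{Aut}_{\mathrm{top}}(\tilde{\mathscr{C}})$: if $\psi_n\to\psi$ uniformly with $p\circ\psi_n=g_n\circ p$, then $g_n = $ (the induced map on $\mathscr{C}$) converges in $\mathrm{Aut}(\mathscr{C})$, and since $\G$ is a compact Lie group acting continuously it is closed in $\mathrm{Aut}_{\mathrm{top}}(\mathscr{C})$, so $g_n\to g\in\G$ and $p\circ\psi=g\circ p$, giving $\psi\in\tilde\G$.

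\emph{The main obstacle} I anticipate is the continuity of lifted automorphisms and of the descending map $p$ for the chamber topology — that is, verifying carefully that the purely combinatorial lifting of automorphisms through the universal cover of a chamber system interacts correctly with the \emph{metric} chamber topology defined via Hausdorff distances of projected galleries in $M$. This hinges on Lemma \ref{independence} (independence of base chamber) and Lemma \ref{control} (bounded homotopies), together with the observation from the proof of Lemma \ref{vertices} that chambers Hausdorff-close to $C'=p(\tilde C')$ are exactly $gC'$ for $g\in\G$ near the identity; this last point is what ties the topology on $\tilde{\mathscr{C}}$ to the topology of $\G$ and makes both the continuity of the $\tilde\G$-action and the closedness statements work. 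Everything else is a routine application of point-set topology (compact-open $=$ uniform convergence on a compact metrizable space) and of the standard theory of coverings of chamber systems from \cite{Ti2,Ro}.
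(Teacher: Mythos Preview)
Your proposal is correct and follows essentially the same approach as the paper: continuity of $\pi$ (and of $\tilde\G$) via the Independence Lemma~\ref{independence}, closedness of $\pi$ via the characterization $\pi=\{\phi:p\circ\phi=p\}$ and continuity of $p$, and the construction of the extension $1\to\pi\to\tilde\G\to\G\to 1$ via the standard lifting property for universal covers of chamber systems (the paper cites \cite{Ro}, Chapter~4, Exercise~8). You supply more detail than the paper for the closedness of $\tilde\G$, but the argument is the same in spirit; your parenthetical remark about discreteness of $\pi$ is unnecessary and in fact misleading (later $\pi$ turns out to be $\{1\}$, $\S^1$, or $\S^3$), but you correctly note it is not needed.
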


  \begin{proof}
It is a simple consequence of the Independence Lemma
\ref{independence} that every element of $\pi$ is a homeomorphism
with respect to the chamber and thick topologies.  In particular,
$\pi$ is a subgroup of the topological automorphism group
$\text{Aut}_\text{top}(\tilde {\mathscr{C}}$).
 We now prove that
$\pi$ is a closed subgroup of $\text{Aut}_\text{top}(\tilde
{\mathscr{C}}$). Let $f_n$ be a sequence in $ \pi$ that converges to
$f$ in $\text{Aut}_\text{top}(\tilde {\mathscr{C}})$ in the compact
open topology. In particular, $f_n(\tilde C)$ converges to $f(\tilde C)$ in
the chamber topology for every chamber $\tilde C\in \tilde
{\mathscr{C}}$. Notice that $p(f_n(\tilde C))=p(\tilde C)$.
Therefore, $p(f(\tilde C))=p(\tilde C)$, and it follows that $f$ is
in $\pi$. The compactness of $\pi$ follows since the orbit of $\pi$
is compact and the action of $\pi$ is free.

It is well-known that the $\G$-action on ${\mathscr{C}}$ lifts to a
covering group $\tilde \G$-action on $\tilde {\mathscr{C}}$, where
$\tilde \G$ fits in an extension (see \cite{Ro}, Exercise 8 in
Chapter 4)
$$1\to \pi \to  \tilde{\G}\to \G\to 1.$$

Once again, by the Independence Lemma \ref{independence}, we see
that $ \tilde{\G}$ is a subgroup of $\text{Aut}_\text{top}(\tilde
{\mathscr{C}})$ and as above one can check that it is closed, hence
also compact, since both $\pi$ and $\G$ are.
\end{proof}

\vskip 2mm

Combining these results we have the following main result about polar manifolds of positive curvature:

 \begin{thm} \label{mainresult}
  Any polar action of a compact connected Lie group $\G$ on a simply connected positively curved manifold $M$ whose associated chamber system is covered by a spherical building $\tilde{\mathscr{C}}$ of rank at least three and whose diagram $\M$ contains no isolated nodes
  is equivariantly diffeomorphic to a polar action on a
  compact rank one symmetric space, other than the Cayley plane.
\end{thm}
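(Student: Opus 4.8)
The plan is to realize $\tilde{\mathscr{C}}(M,\G)$ as the building at infinity of a symmetric space of noncompact type, to transport both the $\G$-action and the deck group into a maximal compact subgroup, to read off $M$ as a free quotient of a round sphere, and finally to upgrade the resulting equivariant homeomorphism to a diffeomorphism via the recognition theorem of \cite{GZ}. Since by hypothesis $\M$ has rank at least three and no isolated nodes, \tref{class build} applies to the compact spherical building $\tilde{\mathscr{C}}:=\tilde{\mathscr{C}}(M,\G)$: it is the building at infinity of a symmetric space $N=\U/\K$ of noncompact type which is a product of irreducible factors of rank at least two, where $\U$ is the identity component of $\mathrm{Aut}_{\mathrm{top}}(\tilde{\mathscr{C}})$, isomorphic to that of $\Iso(N)$, and $\K$ is a maximal compact subgroup. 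Fixing $p\in N$, the isotropy representation of $\K=\U_p$ on the unit sphere $\Sph:=\Sph_p\subset T_pN$ is a linear polar action whose associated chamber system is $\K$-equivariantly identified with $\tilde{\mathscr{C}}$; topologically $\Sph=\Sph^{n-1}$ with $n=\dim N$.

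Next I would invoke \tref{comp group}: the deck group $\pi$ of the covering $p\colon\tilde{\mathscr{C}}\to\mathscr{C}(M,\G)$ is a closed subgroup of $\mathrm{Aut}_{\mathrm{top}}(\tilde{\mathscr{C}})$, and there is a closed subgroup $\tilde\G\subset\mathrm{Aut}_{\mathrm{top}}(\tilde{\mathscr{C}})$ containing $\pi$ as a normal subgroup with $\tilde\G/\pi=\G$, whose action on $\tilde{\mathscr{C}}$ covers the $\G$-action on $\mathscr{C}(M,\G)$. One checks that $\pi$ is compact (it is a fiber of the proper map $p$, hence closed in the compact space $\tilde{\mathscr{C}}$), and therefore $\tilde\G$ is compact, being an extension of the compact group $\G$ by the compact group $\pi$. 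A compact subgroup of a Lie group with finitely many components lies in a maximal compact subgroup; after conjugation we may take this maximal compact subgroup to contain $\K$ and to act on $\Sph$ with the same orbits as $\K$, so that $\tilde\G$ acts linearly and polarly on $\Sph=\Sph^{n-1}$, and its normal subgroup $\pi$ acts freely on $\Sph$ (being the deck group of $\tilde{\mathscr{C}}\to\mathscr{C}(M,\G)$ realized on $\Sph$).

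It then follows that $M$ with its $\G=\tilde\G/\pi$-action is equivariantly homeomorphic to $\Sph^{n-1}/\pi$ equipped with the action of $\tilde\G/\pi$. Since $M$ is simply connected, the homotopy exact sequence of $\pi\hookrightarrow\Sph^{n-1}\to M$ forces $\pi$ to be connected; being diffeomorphic to one of its (free) orbits, a great subsphere, $\pi$ is then one of $1$, $\Sph^1$, $\Sp(1)$, whence $M\cong\Sph^{n-1}$, $\CP^k$, or $\HP^k$ --- in each case a compact rank one symmetric space carrying the polar $\G$-action descended from the linear polar $\tilde\G$-action on the sphere. This proves Theorem \ref{A} in the present case up to equivariant homeomorphism. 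To promote it to an equivariant diffeomorphism one uses that, by \tref{coxeter section}, the polar group acts linearly on the section, so the two actions agree on the complete set of equivariant invariants of a polar manifold (section with its polar-group action, together with the $\G$-isotropy data along a chamber); the recognition theorem of \cite{GZ} then delivers an equivariant diffeomorphism.

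The main obstacle I anticipate is the passage carried out in the third paragraph: bridging the essentially combinatorial output of Theorems \ref{class build} and \ref{comp group} --- that $\tilde\G/\pi=\G$ covers the $\G$-action on the abstract chamber systems --- with the genuinely geometric assertion that $M$ itself is $\Sph^{n-1}/\pi$ as a $\G$-space. This requires verifying that the thick topology on $\tilde{\mathscr{C}}$ coincides $\tilde\G$-equivariantly with the round topology on $\Sph$ (via \lref{vertices} and the identification underlying \tref{class build}), that the $\pi$-action on $\Sph$ is free and smooth, and that the only free quotients that can arise are the compact rank one symmetric spaces --- so that, in particular, $\CaP$ does not occur here, in accordance with its exceptional status. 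Granting these points, the concluding smoothing via \cite{GZ} is routine.
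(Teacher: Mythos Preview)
Your proposal is correct and follows essentially the same route as the paper's proof: invoke \tref{class build} to realize $\tilde{\mathscr{C}}$ as the building at infinity of $\U/\K$, use \tref{comp group} to place $\pi\lhd\tilde\G$ as closed (hence compact) subgroups of $\mathrm{Aut}_{\mathrm{top}}(\tilde{\mathscr{C}})$, conjugate into $\K$ to make the action linear on the sphere, read off $\pi\in\{1,\S^1,\S^3\}$ from freeness and connectedness (the latter forced by $\pi_1(M)=1$), and finish with the recognition theorem of \cite{GZ}. The only cosmetic differences are that the paper establishes connectedness of $\pi$ before conjugating into $\K$ (via the covering $\Sph/\pi_0\to\Sph/\pi=M$) rather than after, and that your aside about orbits being ``great subspheres'' is not quite accurate---what you actually need, and what the paper uses, is simply that a connected compact Lie group acting linearly and freely on a sphere is $\{1\}$, $\S^1$, or $\S^3$.
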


\begin{proof} It follows from Theorem \ref{class build} that the simplicial complex
 $\tilde {\mathscr{S}}$ as a set with the thick topology
  is a sphere that we will denote by $\Sph$. The compact subgroup $\pi$ of $\text{Aut}_\text{top}(\tilde
{\mathscr{C}})$ is a Lie group since $\text{Aut}_\text{top}(\tilde
{\mathscr{C}})$ is a Lie group by Theorem \ref{class build}.

We would like to show that $\pi$ is connected. We denote the identity component of $\pi$ by $\pi_0$.
   Clearly, $\pi _0$ acts freely on the sphere $\Sph$, and
there is a covering $\Sph /\pi _0\to \Sph/\pi =M$ whose fiber has the same
number of points as $\pi/\pi _0$. This is a contradiction since $M$ is simply connected. It
follows that $\pi=\pi _0$  and that $\pi$ and  $\tilde G$  are both compact and connected
subgroups, i.e., Lie subgroups of the identity component $\U$ of $\text{Aut}_\text{top}(\tilde
{\mathscr{C}})$.
As a consequence, $\tilde {\G}$ has a fixed point
in the symmetric space $\U/\K$, where $\K$ is a maximal compact
subgroup of the semisimple Lie group $\U$. Therefore, up to
conjugation we can assume that $\tilde {\G} \subset \K$ and it
follows
 that the action by $\tilde {\G}$  is topologically equivalent to a linear polar action orbit equivalent to the isotropy representation of $\K$
 on $\Sph$.
 Since the  action of $\pi$ on $\Sph$ is both linear and free, $\pi$ is either $\{1\}$, $\S^1$ or $\S^3$ (cf. e.g. \cite{Br}) and by representation theory the action is the Hopf action.
 It follows that $M$ is
$\G$-equivariantly homeomorphic to the rank one symmetric space
$\Sph/\pi$ with the linear polar action by $\G=\tilde \G/\pi$.

To complete the proof, we note that the induced linear polar action
on $\Sph/\pi$ by $\G = \tilde{\G}/\pi$ has the same data,  i.e.,
section, polar group, isotropy groups and their slice
representations as the polar $\G$ action on $M$. From the
reconstruction theorem of [GZ] it follows that $(M,\G)$ is smoothly
equivalent to $(\Sph/\pi, \G)$.
\end{proof}

In view of the Building Cover Theorem \ref{building}, this takes care of all
cases where $\G$ has no fixed points and $\M$ has no isolated nodes
and rank at least 4.

\smallskip

We conclude this section by another application of Theorem \ref{mainresult}. As mentioned in the
introduction there are  polar $\G$ actions by $\SU(3)\SU(3)$ and $\SO(3)\G_2$ on
$\Bbb{OP}^2$ (see \cite{PTh, GK}) whose associated chamber systems
$\mathscr{C}(\Bbb{OP}^2, \G)$ are of type $\CC_3$. In
particular we conclude from Theorem \ref{mainresult}  that

\begin{cor}[Not a Building]\label{Non Building}
 The universal covers of the chamber systems $\mathscr{C}(\Bbb{OP}^2, \G)$ associated to the polar actions on
 $\Bbb{OP}^2$ by $\G = \SU(3) \cdot \SU(3), \SO(3) \cdot \G_2$ are simply connected chamber systems of type $\CC_3$ that are not buildings.
 \end{cor}

Examples of simply connected chamber systems of type
$\CC_3$ that are not buildings were discovered by Neumaier and later
but independently by Aschbacher. The examples of the chamber systems in
 \ref{Non Building}  are new and follow also from \cite{Ly} and \cite{KL} as noted by them. These intriguing examples motivate the
following interesting problems.

\begin{problem} [Cayley plane chamber system]\label{cayley} Let $\tilde {\mathscr{C}}$ denote the universal cover of the chamber system $\mathscr{C}: =\mathscr{C}(\Bbb{OP}^2;
\G )$, where $\G$ is one of  $\SU(3) \cdot \SU(3), \SO(3) \cdot \G_2$.
\begin{enumerate}
\item
 Is $\mathscr{C}$ itself simply connected?
\item
If $\mathscr{C}$ is not simply connected, does the section $\Bbb
{RP}^2$ lift to $\Bbb S^2$ in $\tilde {\mathscr{C}} $? What is its fundamental group, and what is $\tilde {\mathscr{C}}$?
\end{enumerate}
\end{problem}

\smallskip

\section{Irreducible Chamber systems and Tits geometries of rank $3$}  \label{sectionC3gen} 

The purpose of this section is to develop and describe an
alternative to the Building Cover Theorem \ref{building} for irreducible polar
actions of cohomogeneity two, i.e., for rank 3 chamber systems
$\mathscr{C} = \mathscr{C}(M, \G)$, where $\M$ has no isolated
nodes, or equivalently $\M$ has type $\A_3$ or $\CC_3$. In this
case, any  closed chamber $C$ of $\mathscr{C}$, or equivalently the
$\G$ orbit space of $M$ is the spherical triangle with angles
\{$\frac \pi 3$, $\frac \pi 2$, $\frac \pi 3$\}, or  \{$\frac \pi
4$, $\frac \pi 2$, $\frac \pi 3$\} respectively.

Our method is based on a \emph{construction of chamber system
covers} (corresponding to the principal bundle construction for
polar manifolds in \cite{GZ}), and on an \emph{axiomatic
characterization} due to Tits of buildings of irreducible type $\M$,
when the geometric realization $|\mathscr{C}|$ ($\mathscr{C}$ with
the thin topology) of the associated chamber system  $\mathscr{C}$,
is a \emph{simplicial complex}.  This characterization is given in
terms of the \emph{incidence geometry} associated with
$\mathscr{C}$. Here, by definition

\begin{itemize}
\item
Vertices $x,y \in |\mathscr{C}|$ are \emph{incident}, denoted $x*y$,
if and only if $x$ and $y$ are contained in a closed chamber of
$|\mathscr{C}|$.
\end{itemize}

Clearly, the incidence relation (not an equivalence relation) is
preserved by the action of $\G$ in our case.

To describe the needed characterization, and to prove that our
chamber systems $\mathscr{C}(M, \G)$ of types $\A_3$ and $\CC_3$ are
simplicial, we will use the following standard terminology:

\begin{itemize}
\item
The {\it shadow} of a vertex $x$ on the set of vertices of type $i
\in I$, denoted $\text{Sh}_i(x)$, is the union of all vertices of
type $i$ incident to $x$.
\end{itemize}

When $\M = \CC_3$,  we will use $q$, $r$, and $t$ respectively,  to
denote the vertices of a chamber $C$ at angles $\frac \pi 4$-,
$\frac \pi 2$-, and $\frac \pi 3$ respectively, corresponding to the
three nodes from left to right of the $\CC_3$-diagram

\begin{center}
\begin{picture}(30,10)
    \put(0,0){\circle{3}}
    \put(19,0){\circle{3}}
    \put(41,0){\circle{3}}


    \put(2,0){\line(2,0){16}}

    \multiput(20,-1)(0,2){2}{\line(2,0){20}}

\end{picture}
\end{center}

\medskip

\no The faces in $C$ opposite $q$, $r$ and $t$ respectively, will be
denoted by $\ell _q$, $\ell _r$ and $\ell _t$ respectively.

Following Tits \cite{Ti2}, we call the vertices of type $q$, $r$ and
$ t$,  {\it points, lines}, and \emph{planes}  respectively. We denote
by $Q, R$ and $T$ the set of points, lines, and planes in
$\mathscr{C}(M;\G)$. Notice that $\G$ acts transitively on $Q$, $R$
and $T$.

Using this terminology we prove the following key

\begin{thm}[Simplicial] \label{simplicial}
The geometric realization  $|\mathscr{C}(M,\G)|$ of a chamber system
$\mathscr{C}(M,\G)$ of type $\A_3$ or $\CC_3$ associated with a
simply connected polar $\G$-manifold $M$ is simplicial.
\end{thm}

\begin{proof} Since the case of $\A_3$ follows directly from a part of the proof of the $\CC_3$ case (cf. Case (i) below), we only discuss the latter.

We claim that all we need to show is that \emph{vertices of
different types are joined by at most one minimal geodesic}.
In particular, an edge is determined by its vertices.
In
fact, given this we only need to prove that any chamber $C$ of
$|\mathscr{C}(M,\G)|$ is uniquely determined by its vertices.
So
suppose $C$ and $C'$ are chambers with the same vertices. From the
claim they have the same edges as well. Now by transitivity there is
a $\g \in \G$ with $\g C = C'$. Since $\g$ fixes all vertices and
edges of $C$ it is in the principal isotropy group of $C$ and hence
$\g C = C$.

\smallskip

Case (i). One of the vertices is a plane.

For a plane $t\in T$, note that the shadow $\text{Sh}_Q(t)$
(resp. $\text{Sh}_R(t)$) of $t$ in $Q$ (resp. $R$) is the
homogeneous space  $\G_t q = \G_t/\G_t\cap \G_{q}$, where $q\in
\text{Sh}_Q(t)$ (resp. $q\in \text{Sh}_R(t)$) . Moreover, the set of
all edges containing $t$ and $q$ is the homogeneous space $\G_t\cap
\G_q \ell_r = \G_t\cap \G_q/\G_{\ell_r}$, where $\ell_r$ is a
minimal reference geodesic connecting $t$ and $q$.  It suffices to
prove that $\G_{\ell_r}=\G_t\cap \G_{q}$:

Consider the fibration
$$\frac{\G_t\cap
\G_{q}}{\G_{\ell_r}}\to \frac{\G_t}{\G_{\ell_r}}\to
\frac{\G_t}{\G_t\cap \G_{q}}$$

\no Note that the
base  cannot be a point, since otherwise,
$\G_t\subset \G_{q}$, and so by the primitivity $\G=\langle \G_t,
\G_{q}\rangle =\G_{q}$, and hence $\G$ would have fixed points. On
the other hand, $\G_t/\G_{\ell_r}$ is the set of points (resp.
planes) in a type $\A_2$ geometry, associated with the slice
representation at $t$, i.e., $\G_t/\G_{\ell_r}=\Bbb P^2(k)$, where
$k=\Bbb R, \C, \Bbb H$ or $\Bbb O$. In particular, $\G_{\ell_r}$ is a maximal subgroup of $\G_t$, and thus $\G_{\ell_r}=\G_t\cap \G_{q}$.

\smallskip
Case (ii). One of the vertices is a line.

Consider a chamber $C$ with sides $\ell_t$, $\ell_r$, and $\ell_q$, and suppose $\ell'_t$ is another minimal geodesic joining the vertices $r$ and $q$ of $C$. Since each singular isotropy group of the reducible slice representation of $\G_r$ acts transitively on the other singular orbit, there is a $\g \in \G_{\ell_q} \subset \G_r$ with $\g \ell_t = \ell'_t$. By (i) $\g C$ is a chamber with sides $\ell'_t$, $\ell_r$, and $\ell_q$. But since $\g$ fixes $\ell_r$ and $\ell_q$ it is in the principal isotropy group of the slice representation of $\G_t$ and hence $\g C = C$. Thus $\ell'_t = \ell_t$.
\end{proof}

Since by work of Tits \cite{Ti2} (cf. Proposition 6), any
$\A_n$-geometry is a building, we conclude as in Theorem \ref{mainresult}  (with
$\pi$ trivial) that

\begin{cor}
A simply connected positively curved  polar $\G$-manifold of type
$\A_3$ is equivariantly diffeomorphic to a polar $\G$ representation
on a sphere.
\end{cor}

 \smallskip

In the much more complicated and rich case, where the chamber system $\mathscr{C}(M,\G)$ is of type $\CC_3$ our classification carried out in \cite{FGT} hinges on an axiomatic characterization for a  connected  Tits geometry of type $\CC_3$ to be a building \cite{Ti2}.

For all but two such chamber systems, this Tits axiom is verified for a suitable cover of $\mathscr{C}(M,\G)$, and the two exceptional cases are identified with the chamber systems for the two exceptional polar actions of type $\CC_3$ on the Cayley plane. - Since by Theorem \ref{simplicial} $\mathscr{C}(M,\G)$ is simplicial an alternative proof is offered in \cite{KL}.

\section{Reduction Input and Fixed Point Case} \label{input}   

In the last two sections we will deal with reducible polar
 actions in positive
curvature.

The key result in this section is a characterization of Hopf
fibrations in our context, that also will play an essential role in
the next section. As a corollary we obtain a classification when
fixed points are present. We need the following

\begin{lem}[Extension]\label{ext}
Let $(\Sph^n, \G)$ be a fixed point free effective polar representation with associated chamber system (building) $\mathscr{C}(\Sph^n;\G)$. If $\tilde {\G }\supset \G$ is a compact connected subgroup of  ${\rm Aut}_{\rm top}(\mathscr{C}(\Sph^n,\G))$, then $\tilde {\G }$ is a Lie group acting linearly on $\Sph^n$.
\end{lem}

\begin{proof}
First note that the induced action by $\tilde{\G}$ on $\Sph^n$ is continuous and orbit equivalent to the $\G$ action.

We begin by considering irreducible $\G$ representations. In the special case where $\G$ acts transitively on $\Sph^n$, a chamber of $\mathscr{C}(M;\G)$ is just a point in $\Sph^n$ and ${\rm Aut}_{\rm top}(\mathscr{C}(\Sph^n,\G))$ is the homeomorphism group of $\Sph^n$ with the compact open topology. Since all $\tilde{\G}$ orbits (there is only one) are locally connected and $\Sph^n$ a manifold, Theorem 1 (page 244) of \cite{MZ} states that $\tilde{\G}$ is a Lie group. In fact by \cite{Po}, $\tilde {\G }$ is a subgroup of $\SO(n+1)$.

In the case where $\G$ acts by cohomogeneity one or higher on $\Sph^n$, it follows from \cite{Da} that the action is orbit equivalent to the isotropy representation of a symmetric space $\U/\K$ of noncompact type. Moreover, $\mathscr{C}(\Sph^n;\G)$ is the building at infinity of $\U/\K$ whose (topological) automorphism group is $\U$, a Lie group. Thus, $\tilde{\G}$ is a compact subgroup acting isometrically on $\U/\K$, and hence with a fixed point, where the action is linear and orbit equivalent to the $\G$ action.

In general, the $\G$ action splits into a sum of irreducible subactions. From the above we conclude that the restriction of $\tilde{\G}$ to each subspace sphere is linear. Moreover, since $\tilde{\G} \subset {\rm Aut}_{\rm top}(\mathscr{C}(\Sph^n,\G))$, it takes chambers to chambers, and hence maps any minimal geodesic between $\G$ invariant subspace spheres to a minimal geodesic between the same invariant spheres. Thus, $\tilde{\G}$ acts linearly in fact isometrically on $\Sph^n$.
\end{proof}

\bigskip

We are now ready to prove

\begin{lem}[Hopf fibration]\label{hopf}

Let $(\Sph^n,\G)$ be a fixed point free linear polar action, and $(B, \G)$ a simply connected closed polar manifold.  Suppose $p: \Sph ^n \to B$ is a smooth, $\G$-equivariant, chamber preserving map with the following property: For each $v\in \Sph^n$, the differential $p_*$ on the normal slice at $v$ is a $\G_v$-equivariant isomorphism onto the normal slice at $p(v)$, orbit equivalent to the slice representation of $\G_{p(v)} \supset \G_{v}$. Then  $p$ is either a diffeomorphism, or a Hopf fibration up to equivariant diffeomorphism of $B$ (in particular the fibers are great spheres). Moreover, if $\dim B < n$ and $B$ is a sphere the cohomogeneity is at most $1$.
\end{lem}

\begin{proof}
Note that by assumption the chambers $C$ in $\Sph ^n$ and $B$ are
spherical $k$-simplices, where $k \ge 0$ is the cohomogeneity of the
actions, and $p$ is surjective. Moreover, $p$ is a submersion, since the differential $p_*$ on the tangent space to an orbit is surjective, and by
the assumption about slice representations $p_*$ is an isomorphism on the normal space to the orbit. Furthermore, $p$ when restricted to a section $\Sigma$ in $\Sph ^n$ is a cover of a section in $B$. This in particular proves our claim when $\dim B = n$.

When $\dim B < n$ we know from \cite{Br} that the fiber of the submersion $p$ is homeomorphic to $\Sph^i$, $i = 1, 3$ or $7$, where $i=7$ can only happen when $n=15$. Moreover, from the Gysin exact sequence applied to the fibration $p: \Sph^i \to \Sph^n \to B$,  it follows that, $B$ has the integral cohomology ring of a projective space $\Bbb{FP}^m$, $m \ge 1$ with $\Bbb F=\Bbb C$ or $\Bbb H$ when $i\in\{1,3\}$ and  $B$ is  a homotopy $8$-sphere if $i=7$.

Our proof for the case $\dim B < n$ is anchored at irreducible polar $\G$ representations and polar representations of cohomogeneity at most one (on $\Sph^n$) in conjunction with the above Lemma \ref{ext} and the Compact Transformation Group Theorem \ref{comp group}.
\smallskip

$\bullet$ Cohomogeneity $k = 0$.

\smallskip

From the list of $\G$ acting transitively and isometrically on $\S^n = \G/\H$ it follows directly (and is well known), that $\Sph^n = \G/\H \to \G/\K$ with fiber $\K/\H = \Sph^i$, $i \in \{1,3,7\}$ and $\G/\K$ simply connected, is a Hopf fibration (cf., e.g., Table C of \cite{GWZ}).

\smallskip

$\bullet$ Cohomogeneity $k = 1$.

Recall that a cohomogeneity one action and manifold $(M,\G)$ is completely determined by its \emph{data}, i.e., $\G$ and its isotropy groups along a chamber $C$ in $M$. Indeed, if $\K_{\pm}$ are the isotropy groups at the end points, $u_{\pm}$ of $C$ and $\H$ the principal isotropy group along the interior of $C$, then $\K_{\pm}/\H = \Sph^{\ell_{\pm}}$ are canonically identified with the normal spheres to the orbits at the end points of $C$, and via the slice theorem and canonical gluing

\begin{center}
$M = \G\times_{\K_-} \Disc^{\ell_-} \cup_{\G/\H} \G\times_{\K_+} \Disc^{\ell_+}.$
\end{center}

In our case, let $M = \Sph^n$ with data denoted as above. If $\K'_{\pm}$ and $\H'$ are the (local) data for $B$ along the chamber $C' = p(C)$ in $B$ with endpoints $u'_{\pm}$, then

\begin{center}
$B = \G\times_{\K'_-} \Disc^{\ell_-} \cup_{\G/\H'} \G\times_{\K'_+} \Disc^{\ell_+}$
\end{center}

\no where we have used our assumption $\K'_{\pm}/\H' = \Sph^{\ell_{\pm}}$, and moreover,
$\K'_{\pm}/\K_{\pm} = \H'/\H = \Sph^{i}$ are the fibers of $p$ along $C'$. It is important, that $p$ is determined by these data as well. We will refer to the dimensions, $(\ell_-, \ell_+)$ of the normal spheres of the singular orbits as the \emph{multiplicities} of the action.

\smallskip

$\bullet$ Fiber dimensions $i=1, 3$:

We point out that $B$ is already known up to equivariant diffeomorphism: Indeed, note that in the classification of cohomogeneity one actions on manifolds with the \emph{rational cohomology ring} of $\Bbb{FP}^m$ due to \cite{uchida} and \cite{iwata}, the quadric $\SO(2m+ 1)/\SO(2) \times \SO(2m-1)$ and $\G_2/\SO(4)$ are excluded in our case since they do not have the correct integral cohomology ring.
Thus, from \cite{uchida} and \cite{iwata} we conclude that the $\G$ action on $B$ modulo $\K_0$, the identity component of its kernel, is equivariantly diffeomorphic to a linear action on $\Bbb{FP}^m$, i.e.,  there is an  equivariant diffeomorphism $f: (B; \bar \G) \to (\Bbb{FP}^m; \bar \G)$, where the latter action is $\Bbb F$-linear, $\bar \G$ is the connected normal subgroup of $\G$ such that $\G=\K_0\cdot \bar \G$,  a product up to finite central quotient.

\smallskip

Consider first the case where the $\G$-representation is irreducible. A classification of these (including their data (corrected in \cite{FGT})) is contained in Table E from \cite{GWZ}. A corresponding classification of those induced on $\Bbb{FP}^m$ is contained in Table F in \cite{GWZ}.

When $i=3$, i.e., $B \cong \HP^m$, such actions have multiplicity pairs $(1,1)$ and  $(2,2l+1)$, where $m=1$ and $m=l + 1$, $l \ge1$, respectively. In the first case there is only one such action, while in the second there are two orbit equivalent actions. From the list of possible $\G$ actions on $\Sph^n$ with these multiplicity pairs, it necessary follows that $\K_0 = \S^3$ acting freely in a linear fashion along the fibers of $f\circ p$ as a subaction of $\G$,  and we are done. The same argument works when $i=1$ and the multiplicity pair is $(1,l)$,  including an ``exceptional case" for each of $(1,5)$ and $(1,6)$. In the remaining cases corresponding to the multiplicity pairs $(2, 2l+1)$, $(4,5)$ and $(9,6)$, either $\K_0 = \S^1$ and we are done, or $\K_0$ is trivial. In the latter case it follows that $\G$ acts almost effectively on $B \cong \CP^m$. This on the other hand determines all data, and hence $f \circ p: \Sph^{2m+1} \to \CP^m$ is the Hopf map as claimed.

 Now assume the $\G$ representation is reducible with singular orbits $\Sph_{\pm}$ and $\Sph^n = \Sph_- * \Sph_+$. From the homogeneous case it follows that the fibers of $p$ restricted to $\Sph_{\pm}$ are the fibers of a Hopf fibration. We will show that any fiber of $p$ is a fiber of the uniquely determined Hopf fibration of $\Sph_- * \Sph_+$ restricting to the given ones on $\Sph_{\pm}$. For any regular point $u' \in B$ let $C'$ be the unique chamber containing $u'$, and let $\K'_{\pm}$ be the isotropy groups at the end points $u'_{\pm}$ of $C'$. Also let $C$ be a chamber in $\Sph^n$ with $p(C) = C'$ having isotropy groups $\K_{\pm}$ at its end points $u_{\pm} \in \Sph_{\pm}$, and set $\K': = \K'_- \cap \K'_+$. From our assumption about slice representations and the natural identifications of the normal spheres to the singular orbits at say $u'_+$ and $u_+$ with $\Sph_{-}$ (and vice verse with $+ -$ swapped) it follows that the exponential map from $\Sph^{\perp}_{u'_{+}} \to p(\Sph_-) = B_-$ is a submersion with fibers identified with the fibers of $p_{| \Sph_{-}}$. Since the same is true with the roles of $+ -$ switched, it is not hard to see that $\K'(C') \cong \Sph^{i+1}$ is a  cohomogeneity $1$, $\K'$-submanifold  of $B$ with a suspension action, and $p^{-1}(\K'(C')) = \K'(C) \cong \Sph^i*\Sph^i$ is a cohomogeneity $1$, $\K'$-submanifold  of $\Sph^n$, where the two $\Sph^i$ in the join decomposition are the two Hopf fibers $p^{-1}(u'_{\pm})$.   Therefore, it suffices to establish our claim when $p:  \Sph^{2i+1}= \Sph^i*\Sph^i \to B \cong \Sph^{i+1}$ and the action on $B$ is a suspension action.

 When $i=1$, modulo kernel, the $\K'$ action reduces to the reducible $\T^2$ action on $\Sph^3$, and the suspension action on the base $\Sph^2$. Therefore, the kernel of $\T^2$ on the base is $\S^1\subset \T^2$, acting freely and linearly along the fibers of $p$, and we are done.

  When $i=3$, and if the kernel of the action on the base  $\Sph ^{4}$    act transitively on the fibers of $p$, the desired result follows as in the previous case. If not,  one checks easily that,  modulo kernel, the $\K'$ action contains the sum action of $\S^3\times \S^3$ on $\Sph^7$ as a subaction commuting with a Hopf action given by an $\Bbb H$-structure. This determines all data and $p$ is a Hopf map, with fibers the orbits the diagonal subgroup $\Delta(\S^3)\subset \S^3\times \S^3$,  the principal isotropy group of the suspension action on the base $\Sph^4$.
  \smallskip

$\bullet$ Fiber dimension $i=7$:

In analogy with our applications of \cite{uchida} and \cite{iwata}, we begin by analyzing the action on $B$, where $p: \Sph^{15}\to B$ is an equivariant fibration with fiber diffeomorphic to $\Sph^7$:

We claim that the $\G$ action (modulo kernel) on the homotopy $8$-sphere $B$ is equivariantly diffeomorphic to the spherical join action by  $\SO(2)\SO(7)$, or by $\SO(3)\SO(6)$, or to the suspension action by $\SO(8)$ on $\Sph^8$. Note that almost effectively, these are also actions by $\Spin(2)\Spin(7)$, $\Spin(3)\Spin(6) = \SU(2)\SU(4)$ and $\Spin(8)$, all subgroups of $\Spin(9)$.

\smallskip

To see this, note that from Table E in \cite{GWZ}, the multiplicity pair in $B$, coinciding with those in $\Sph^{15}$ are $(4,3)$ (typo in \cite{GWZ}), $(2,5)$, $(1,6)$, and $(1,7)$, for the potential irreducible representations by $\Sp(2)\Sp(2)$, $\SU(2)\SU(4)$ (or $\U(2)\SU(4)$), $\SO(2)\SO(8)$ and $\SO(2)\Spin(7)$ respectively. In addition, if the $\G$ representation is reducible the action on $B$ is necessarily a suspension action, and so the multiplicity pair is $(7,7)$. It follows that the most singular orbit in $B$ has dimension $0,1,2$, or $3$. Thus it is either a point, a circle or a sphere (in the latter cases since by transversality it is simply connected). Therefore, the dual  singular orbit is also a homotopy sphere (or a point) since it has the homotopy type of the complement of the orbit of codimension at least $3$, again, e.g., using transversality.  Because the singular orbits in $B$ are $\G$-homogeneous spaces, for dimension reasons it follows that, $\G$ can neither be $\Sp(2)\Sp(2)$ nor $\SO(2)\SO(8)$ (the former can not act nontrivially  on $\Sph ^3$, the latter can not act transitively on $\Sph^6$). Therefore, the singular orbits of the $\G$ actions on $B$ are respectively $(\Sph^2, \Sph^5)$, $(\Sph^1, \Sph^6)$ or two points $(p_{-}, p_+)$ corresponding to a representation of $\SU(2)\SU(4)$ (or its extension), $\SO(2)\Spin(7)$ or $\Spin(8)$ on $\Sph^{15}$. From isotropy groups data it follows that the only possible way in which these groups can act by cohomogeneity one on $B$ and in particular transitively on the respective pair of singular orbits is by the sum action of $\SO(3)\SO(6)$ and $\SO(2)\SO(7)$, and the suspension action by $\SO(8)$ respectively.

Next we want to prove that, the $\G$-representation on $\Sph^{15}$ must be one of the tensor representations of $\SO(2)\Spin(7)$, $\SU(2)\SU(4)$ or the reducible  $\Spin(8)$ representation  on $\Sph^{15}$. It remains to exclude the  tensor representation  of $\U(2)\SU(4)$. To do this note that if the $\U(2)\SU(4)$ representation descends via $p$ to $B$, then its center $\S^1$ must be in the kernel $\K_0$ of the $\G$-action on $B$ acting freely along the fibers of $p$. It follows that $p$ induces a fibration $\CP^7 \to B = \Sph^8$ with fiber $\Sph^7/\S^1 \cong \CP^3$. But such a fibration does not exist according to \cite{Ui}.

A set of compatible homomorphisms from the diagrams for the isotropy groups of the tensor representations $\SO(2)\Spin(7)$, $\SU(2)\SU(4)$ and $\Spin(8)$ on $\Sph^{15}$ to the reducible polar actions on $B$ is exhibited in Table \ref{Cayleydata} below (cf. \cite{FGT} for a correction for the isotropy groups data for $\SO(2)\Spin(7)$ case in \cite{GWZ}).

     {\setlength{\tabcolsep}{0.10cm}
\renewcommand{\arraystretch}{1.6}
\stepcounter{equation}
\begin{table}[!h]\label{exceptional}
      \begin{center}
       \begin{tabular}{|c||c|c|c|c|c|}

\hline
$\G$ & Representation   & $\K_-$  & $\K_+$   &  $\H$  &  $(\ell_-,\ell_+)$   \\

\hline\hline

       $\SO(2)\Spin (7)$   & $\Bbb R^2\otimes \Bbb R^8$     &    $\Delta(\SO(2))\SU(3)$   & $ \G_2  $ & $\SU(3)$ & $(1,6)$  \\
        \hline

$\SO(2)\Spin (7)$   & $\Bbb R^2\oplus \Bbb R^7$     &   $\SO(2)\SU(4)$  &  $ \Spin(7)$ & $\SU(4)=\Spin(6)$ & $(1,6)$  \\
        \hline

       $\SU(2)\SU(4)$     & $ \Bbb C^2\otimes \Bbb C^4$         &   $\Delta(\SU(2))\SU(2)$   & $\S^1\cdot \SU(3)$ & $\S^1\cdot \SU(2)$ & $(2,5)$   \\
       \hline

       $\SU(2)\SU(4)$     & $ \Bbb R^3\oplus \Bbb R^6$         &    $\Sp(1) \Sp(2)$    & $\S^1\SU(4)=\S^1\Spin(6)$  & $\S^1\cdot \Sp(2)=\S^1\Spin(5)$ & $(2,5)$   \\
       \hline

        $\Spin(8)$  & $ \Bbb R^8\oplus \Bbb R^8$  &  $\Spin(7)$  & $\Spin(7)$  & $\G_2$ & $(7,7)$ \\
        \hline

$\Spin(8)$  & $ \Bbb R^8\oplus \Bbb R^1$  &  $\Spin(8)$  & $\Spin(8)$  & $\Spin(7)$ & $(7,7)$ \\
        \hline

         \end{tabular}
      \end{center}
      \vspace{0.1cm}
         \caption{Fixed point isotropy representations of polar actions on $\Bbb {C}a\Bbb{P}^2$} \label{Cayleydata}
\end{table}}

  On the other hand, since $\Spin(3)\Spin(6)$, $\SO(2)\Spin(7)$ and $\Spin(8)$ are subgroups of $\Spin(9)$, it follows that they do act on the Cayley plane $\F_4/\Spin(9)$ in a polar fashion with isolated fixed points (cf. \cite{PTh}), hence act by isometries on the Hopf fibration $\Sph^{15}\to \Sph^8$. It is straigtforward to see that, for each of the $\G$-representations,  the set of compatible homomorphisms is unique up to conjugation. This proves the desired result.

\smallskip

$\bullet$ Cohomogeneity $k \ge 2$.

\smallskip
Whether or not the $\G$ action is irreducible, note that $p$ induces a $\G$ equivariant surjective map between
the chamber systems $\mathscr{C}(\Sph ^n,\G)
\to \mathscr{C}(B,\G)$ of the same type $\M$.
 Since $\mathscr{C}(\Sph
^n,\G)$ is a building it is both connected and simply connected. In
particular, $ \mathscr{C}(B,\G)$ is connected.

By our assumption on the slice representations it follows that $p$
yields an isomorphism between all proper residues of
$\mathscr{C}(\Sph ^n, \G)$ and $\mathscr{C}(B,\G)$. In particular,
$p: \mathscr{C}(\Sph ^n,\G) \to \mathscr{C}(B,\G)$ is a covering map
between chamber systems of type $\M$, and hence $\mathscr{C}(\Sph
^n,\G)$ is the universal cover of $ \mathscr{C}(B,\G)$.

By construction of the chamber topology of the universal cover
$\tilde{\mathscr{C}}(B,\G) = \mathscr{C}(\Sph ^n,\G)$ in the
previous section, it is apparent that it coincides with the topology
on $\mathscr{C}(\Sph ^n,\G)$ defined using the Hausdorff metric on
all compact subsets of $\Sph ^n$. The corresponding thick topologies
on $\mathscr{C}(B,\G)$ and $\mathscr{C}(\Sph ^n,\G)$ yield the
original topologies on $B$ and $\Sph ^n$ respectively. Moreover,
with this topology $\tilde{\mathscr{C}}(B,\G) = \mathscr{C}(\Sph
^n,\G)$ is a compact spherical building.

From Theorem \ref{comp group} we also know that the fundamental
group $\pi$ of the cover $\mathscr{C}(\Sph ^n,\G) \to
\mathscr{C}(B,\G)$ is a compact subgroup of  the topological
automorphism group ${\rm Aut}_{\rm top}(\mathscr{C}(\Sph^n,\G))$,
and that there is an action by $\tilde \G \subset {\rm Aut}_{\rm
top}(\mathscr{C}(\Sph ^n,\G))$ covering the $\bar {\G}$-action on $
{\mathscr{C}}$, where $\bar {\G}$ is $\G$ mod its kernel on $B$, and
$\tilde \G$ is an extension of $\bar \G$ by $\pi$. Moreover, the
actions by
 $\G\subset \tilde {\G}$ on $\Sph ^n$ are orbit equivalent, and $B$ is homeomorphic to $\Sph^n/\pi$.

Although in complete generality, we do not know much about the group
${\rm Aut}_{\rm top}(\mathscr{C}(\Sph ^n,\G))$, we claim that in our
case, $\pi \subset \tilde \G$ is
either $\S^1$ or $\S^3$ acting freely on $\Sph ^n$ by the Hopf
action.

\smallskip

Indeed, when the Coxeter diagram for $\M$ has no isolated nodes,
${\rm Aut}_{\rm top}(\mathscr{C}(\Sph ^n,\G))$ is a Lie group by
\cite{GKMW} (the rank is at least $3$). Moreover, since its maximal
compact subgroup acts linearly on $\Sph ^n$, the compact group $\pi$
acts linearly and freely on $\Sph ^n$,  hence $\pi$ is either
trivial, $\S^1$ or $\S^3$ acting on $\Sph ^n$ by the Hopf action.
Notice that $\G$ is either $\bar \G$ or $\tilde \G$ up to finite
kernel.

\smallskip

In general, note that each connected component of the diagram for $\M$ correspond to a $\G$ invariant linear subsphere, $\Sph_i$ of $\Sph^n$ on which $\G$ (mod its kernel) acts irreducibly in a polar fashion. Moreover, for each $i$, $\mathscr{C}(\Sph_i,\G)$ is a compact topological subbuilding of $\mathscr{C}(\Sph ^n,\G)$ invariant under $\tilde{\G}$ covering the chamber subsystem $\mathscr{C}(p(\Sph_i),\G)$.
Applying Lemma \ref{ext} we conclude that the compact group $\tilde{\G}$ is a Lie group that acts isometrically and is orbit equivalent to the action by $\G$ on $\Sph^n$. Thus also, $\pi$ is a compact Lie Group acting isometrically on $\Sph^n$ and the fibration $p: \Sph^n \to B$ is the orbit map by the free action of $\pi$.
\end{proof}

 We are now ready to prove the following

\begin{thm}\label{fixed}
Let $M$ be a simply connected compact positively curved polar $\G$
manifold. If $M^{\G} \ne \emptyset$ then $(M, \G)$ is equivariantly
diffeomorphic to an isometric, polar action of $\G$ on a compact
rank one symmetric space.
\end{thm}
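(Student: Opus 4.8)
The plan is to reduce Theorem~\ref{fixed} to the Hopf Fibration Lemma~\ref{hopf} by means of a cut‑locus construction at a fixed point. First I would dispose of the cohomogeneity one case, where Proposition~\ref{fix} can fail (cf.\ Example~\ref{ex}) but where the classification of positively curved cohomogeneity one manifolds in \cite{GWZ} directly yields the claim; so assume $\dim M^{*}\ge 2$. Fix $p\in M^{\G}$ and set $F:=M^{\G}$. By Proposition~\ref{fix} when $\Sigma$ is a sphere (the case of a projective section requiring only minor modifications), $F=\Sigma^{\W}$ is a totally geodesic submanifold diffeomorphic to a sphere $\Sph^{m}$ ($m\ge 0$), and $M^{*}=C=\Sph^{m}\ast\Delta^{\ell}$; the \emph{far factor} $\Delta^{\ell}$ of this join is the image under the orbit map of a $\G$‑invariant totally geodesic submanifold $N\subset M$. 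Since the link of $p^{*}$ in $C$ equals $\Sph^{m-1}\ast\Delta^{\ell}$, the isotropy representation of $\G=\G_{p}$ on the normal space $\nu_{p}$ of $F$ at $p$ is polar, \emph{without fixed vectors}, with orbit space the simplex $\Delta^{\ell}=N/\G$.

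Next I would carry out the geometric heart of the argument. Using positive curvature exactly as in the soul‑type reasoning of Proposition~\ref{dif-sec} and Lemma~\ref{w-sphere}, the function $x\mapsto\operatorname{dist}(x,F)$ is strictly concave along geodesics normal to $F$, hence has a single interior critical value, $N$ is precisely the set on which it is maximal, and $M$ is the union $D(\nu F)\cup_{\partial}D(\nu N)$ of the two normal disk bundles, glued along their common boundary by the normal‑geodesic flow. That same flow furnishes, fibrewise over $p$, a smooth $\G$‑equivariant \emph{cut map} $p_{\mathrm{cut}}\colon\Sph(\nu_{p})\to N$, which is chamber preserving; since the sections of $M$ restrict to sections of $N$, the manifold $N$ is polar, and a van Kampen argument applied to the disk‑bundle decomposition shows $N$ simply connected, so by Corollary~\ref{polar space form} it is a simply connected closed polar space form of spherical type. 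Finally, comparing isotropy groups along the normal geodesics from $F$ to $N$ — these are constant on the open part of each such geodesic, by the behaviour of isotropy groups under folding, cf.~\ref{folding} — shows that the slice representations of $\G_{v}$ and of $\G_{p_{\mathrm{cut}}(v)}$ are orbit equivalent for every $v\in\Sph(\nu_{p})$, and one already knows $\Sph(\nu_{p})/\G=\Delta^{\ell}=N/\G$.

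All hypotheses of Lemma~\ref{hopf} are now in place, so $p_{\mathrm{cut}}$ is either a $\G$‑equivariant diffeomorphism or $\G$‑equivalent to a Hopf fibration with fibre $\Sph^{1}$, $\Sph^{3}$ or $\Sph^{7}$. In the first case $N\cong\Sph(\nu_{p})$, and the disk‑bundle decomposition exhibits $F$ and $N$ as dual spheres of complementary dimension, so $(M,\G)$ is equivariantly diffeomorphic to the unit sphere in $T_{p}M$ with the (linear, polar) isotropy representation of $\G$. In the remaining cases the decomposition, now glued along the boundary by the Hopf map, forces $M$ to be $\G$‑equivariantly diffeomorphic to $\CP^{n}$, $\HP^{n}$, or — when the fibre is $\Sph^{7}$, so that $F$ is a point and $N\cong\Sph^{8}$ — the Cayley plane $\CaP$, each with the corresponding isometric polar action. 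In every case the construction shows that the section, the polar group, and all isotropy subgroups together with their slice representations agree with those of this linear model, so the reconstruction theorem of \cite{GZ} upgrades the equivariant homeomorphism to an equivariant diffeomorphism, completing the proof.

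I expect the main obstacle to be the cut‑locus step: proving rigorously that $\operatorname{dist}(\cdot,F)$ has exactly one interior critical value, that the maximum set $N$ is a genuine totally geodesic $\G$‑invariant submanifold, and that $M$ is the associated twisted double of disk bundles — together with the simple‑connectivity of $N$ in the cases of small normal codimension. Once $N$ is recognized as a simply connected polar space form of spherical type and $p_{\mathrm{cut}}$ as a chamber‑preserving equivariant submersion with matching slice data, the conclusion is a formal consequence of Lemma~\ref{hopf} and the recognition theorem.
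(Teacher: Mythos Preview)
Your approach is essentially the paper's, and the key reduction to Lemma~\ref{hopf} via the cut map at a fixed point is exactly what the paper does. There are, however, two structural points worth flagging.

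First, the paper separates the two section types and treats them quite differently. When $\Sigma$ is a sphere, the paper does \emph{not} invoke Lemma~\ref{hopf} at all: instead it picks a pair of antipodal points $p_{\pm}\in F=\Sph^{m}$, observes that $\G(\{p_{-},p_{+}\}\ast F')$ is a $\G$-invariant polar submanifold with two isolated fixed points, and reads off directly that $\hat F=\G F'$ is equivariantly the normal sphere $\Sph^{\perp}$; iterating gives $M\cong F\ast\Sph^{\perp}$. Lemma~\ref{hopf} is used \emph{only} in the projective-section case, where $p$ is necessarily an isolated fixed point, the dual submanifold is $B=\G\Lambda$ with section $\Lambda=\RP^{k-1}$, and the cut map is $\Sph_{p}\to B$. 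So what you call a ``minor modification'' is in fact the case where the Hopf Lemma is actually needed, and what you treat as the main case (spherical section, $F=\Sph^{m}$) is handled in the paper by an elementary suspension argument. Your uniform Hopf-Lemma approach can be made to work in the spherical case too, but you would then need to argue that $p_{\mathrm{cut}}$ is bijective on sections (it is, since the exponential map of $\Sigma=\Sph^{k}$ at distance $\pi/2$ identifies the normal $\Sph^{\ell}$ at $p$ with $F'$), hence a diffeomorphism globally; otherwise you have not ruled out a spurious Hopf fibration when $m\ge 1$.

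Second, a slip in your diffeomorphism case: $(M,\G)$ is not the unit sphere in $T_{p}M$ (that has the wrong dimension); the correct linear model is the join $F\ast\Sph(\nu_{p})=\Sph^{m}\ast\Sph^{n}$, with $\G$ acting trivially on the first factor and by the normal slice representation on the second. Relatedly, asserting that ``$F$ and $N$ are dual spheres of complementary dimension'' does not by itself force $M$ to be a sphere; you need triviality of the normal bundles (or the explicit join description), which is exactly what the paper's suspension trick supplies. Your identification of the cut-locus step and simple-connectivity of $N$ as the main technical obstacles is accurate; the paper handles the latter by transversality in the projective case and by the explicit sphere description in the spherical case.
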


\begin{proof}
We will see in particular that $M$ is a sphere if and only if the
section $\Sigma$ is a sphere.

Let us first deal with the case where

\no $\bullet$ $\Sigma$ is a $k$-sphere, $k \ge 2$:

Recall by Proposition \ref{fix} that in this case $M^{\G} = \Sigma^{\W} =: \Sph \subset \Sigma = \Sph * \Sph^{\ell}$, and $C = M/\G = \Sigma/\W = \Sph * \Delta$, where $\Delta = \Sph^{\ell}/\W$ and $\dim \Delta = \ell \ge 1$.

The smooth spherical join description $\Sigma = \Sph *
\Sph^{\ell}$ yields a decomposition of $\Sigma$ as a union
of tubular neighborhoods of $\Sph$ and of $\Sph^{\ell}$. Applying $\G$ gives
a smooth decomposition of $M$ into a union of tubular
neighborhoods of $\Sph = M^{\G}$ and the $\G$ invariant manifold $\G \Sph^{\ell} =: \Sph' \subset M$. (In the metric chosen note that the cut locus of $\Sph$ in $M$
is $\Sph'$ and vice versa, at distance $\pi/2$ from one another).
Note that $\Sph'$ is a
polar $\G$ manifold with section $\Sph^{\ell}$, polar group $\W$ and $\Sph'/\G = \Sph^{\ell}/\W = \Delta$. Moreover, if $\Sph^0 = \{p_-\cup
p_+\} \subset \Sph$ is a pair of antipodal points in $ \Sph$, we see
that $\G \Sph^{\ell + 1} = \G ( \{p_-\cup p_+\} * \Sph^{\ell})$ is a
$\G$ invariant polar submanifold $N \subset M$ with two isolated
fixed points $p_{\pm}$, section $\Sigma_0 := \{p_-\cup p_+\} * \Sph^{\ell} \subset \Sigma$ and polar
group $\W$. From this it in particular follows that $\Sph'$ is
equivariantly diffeomorphic to the unit sphere $\Sph^{\perp}$ at a fixed point, say $p_-$ of $\G$ in $N$, and that $N$ is
equivariantly diffeomorphic to the suspension of this. Of course
$\Sph^{\perp}$ is the normal sphere of $\Sph = M^{\G}$ in $M$ at $p_-$, and
a similar argument now shows that $M$ is equivariantly diffeomorphic
to  $\Sph * \Sph^{\perp}$ where $\G$ acts trivially
on  $\Sph$ and by the isotropy representation of $\G$ on the normal
sphere $\Sph^{\perp}$.

We now turn to the case where

\no $\bullet$ $\Sigma$ is a projective $k$-space, $k \ge 2$:

Since $\Sigma^*=\Sigma/\W$ is a spherical $k$-simplex $\Delta$ and $M^{\G} \subset
\Sigma^{\W}$, we know that $M^{\G}$  is contained in the vertices of any chamber $C = \Delta=M/\G = \Sigma/\W$ by Proposition
\ref{chamber group}.

Let $p_0 \in M^{\G}$ be such an isolated fixed point, and $\Delta_0$ the chamber face opposite $p_0$ in a fixed chamber $C= \Delta$, i.e., $C = \Delta = p_0 * \Delta_0$. It follows that
$B:= \G \Delta_0 \subset M$ is a polar space form $\G$ submanifold of
$M$ with section $ \RP^{k-1} \subset \Sigma = \RP^k$ and polar group induced from
$\W$.  Arguing as above,
$M$ is the union of a ball centered at $p_0$ and a tubular
neighborhood of
 $B$. (In the chosen metric $B$ is the cut locus of $p_0$ and vice versa at distance $\frac \pi 2$). In particular, we have an equivariant sphere fiber bundle
$p: \Sph \to B$ (with nontrivial fiber) between polar $\G$ manifolds
 with the same orbit space $\Delta_0$, where $\Sph$ is the unit sphere at $p_0$. Note also, that for any $0 < r <  \frac \pi 2$ the metric $r$-sphere $\Bbb S(p_0,r)$ centered at $p_0$ is a polar $\G$ manifold equivariantly diffeomorphic to $\Sph$ via scaling and $\exp_{p_0}$. Moreover, $\Bbb S(p_0,r)$ coincides with the boundary $\Bbb S(B; \frac \pi 2 -r)$ of the $ \frac \pi 2-r$ tube $\Bbb D(B;  \frac \pi 2 -r)$ of $B$, and in this way $p$ can also be viewed as the projection from the unit normal sphere bundle, $\Sph^{\perp}(B)$ of $B$ to $B$. By transversality we see that $B$ is simply connected. From this description it follows that $M$ is equivariantly diffeomorphic to a projective space once it is established that $p$ is a Hopf fibration. To see this it remains to check the assumption on the slice representations in Lemma \ref{hopf}.

 Let $\gamma$ be a geodesic in $C$ from $p_0$ to $q\in \Delta_0$ perpendicular to $\Delta_0$. By the slice theorem,  $\G\times _{\G_q} \hat V_q$, is a $\G$-equivariant tubular neighborhood of the orbit $\G(q)$,  where $\hat V_q$ is the slice in $M$. Since $B$ is $\G$-invariant, we get
a $\G_q$-invariant decomposition $\hat V_q=V_q\oplus V_q^\perp$ where $V_q$ is the slice in $B$, and $V_q^\perp$ is the normal space to $B$ at $q$.  Note that, from the slice representation of $\G_q$ on $V_q\oplus V_q^\perp$, the slice $\hat V_x$ for $\G_x$ at $x\in \gamma $ different from $q$ is naturally identified with $V_q\oplus T_x \gamma $. Therefore, the slice of the orbit at $x$ inside $\Sph (B; \frac \pi 2-r)$ is canonically identified with $V_q$. Moreover, the orbit space $V_q/\G_x=V_q/\G_q$, a cone over the space of directions at $q\in \Delta_0$.  The desired result follows.
 \end{proof}

\section{Fixed point Free Reducible Actions} \label{genRed}     

In all remaining cases, the orbit space $M^* = \Sigma^*$ is a
simplex $\Delta$ isometric to all chambers in $M$. Moreover,
$\Delta$ is a spherical join $\Delta = \Delta_- * \Delta_+ =
\Delta^{m_-} * \Delta^{m_+}$, corresponding to two \emph{dual} $\W$
invariant subsections $\Sigma _-$ and $\Sigma _+$, where $\Sigma
_\pm=\Bbb S^{m_\pm}$ or the projective spaces $\RP^{m_\pm}$.

Viewing $\Delta$ also as a subset of a fixed section $\Sigma$,
clearly $B_- = \G \Delta^{m_-}$ and $B_+ = \G \Delta^{m_+}$ are two
polar $\G$ submanifolds in $M$ with sections $\Sigma_{-},
\Sigma_{+}\subset \Sigma$ and Weyl groups $\W$ (mod kernel).  In
particular, $B_{\pm}$ are polar space forms of spherical type.
Moreover, just like $\Delta$ can be viewed as the union of two
tubular neighborhoods of the $\Delta^{m_\pm}$, $M$ is the union of
tubular neighborhoods of the $\G$ submanifolds $B_{\pm}$.

In the remaining cases where no fixed points are present our first goal is to exhibit a geometric description of $M$ as a projective space in which $B_{\pm}$ is a dual pair of projective subspaces. The pivotal steps are to show that these pairs are the cut loci of one another, and that for each point $p_{\pm} \in B_{\pm}$, the exponential map (up to scaling) from the unit normal sphere at  $p_{\pm}$ to $B_{\pm}$ defines a map to $B_{\mp}$ which in turn is a Hopf fibration. This is in spirit achieved by reducing it to the fixed point case where the groups in question are the isotropy groups at $p_{\pm}$. Analyzing and making full use of equivariant restrictions forced by this description will then yield a proof of our main result in this section:

\begin{thm}[Non-fixed point] \label{nonexcep}
A reducible fixed point free polar action on a simply connected
positively curved manifold is equivariantly diffeomorphic to an
isometric polar action on a rank one symmetric space, excluding the
Cayley plane.
\end{thm}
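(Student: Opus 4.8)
The plan is to run an induction on the number of isolated nodes of the Coxeter diagram $\M$, using the two dual invariant submanifolds $B_{\pm}$ and the Hopf Fibration Lemma~\ref{hopf} as the engine. When $\M$ has no isolated nodes, the action is already covered by the irreducible case: either the universal cover $\tilde{\mathscr C}(M,\G)$ is a compact spherical building with connected-diagram-free-of-isolated-nodes, so Theorem~\ref{mainresult} applies, or the diagram is of type $\CC_3$ and Theorem~\ref{thmC3} applies (and the Cayley plane exception is exactly the one excluded in the statement). So the induction anchor is free. For the inductive step, pick a vertex $q_-$ of the chamber $\Delta$ corresponding to an isolated node, write $\Delta = q_- * \Delta_+$ accordingly, and get the dual decomposition $M$ as the union of a tubular neighborhood of $B_- = \G q_-$ and one of $B_+ = \G\Delta_+$, where $B_\pm$ are polar space forms of spherical type with strictly fewer isolated nodes in their diagrams (for $B_+$; for $B_-$ the action is transitive on a sphere since $q_-$ is an isolated node, so $B_-$ is a homogeneous space).

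First I would analyze $B_-$: since the node is isolated, $\G$ acts transitively on the normal sphere $\Sph_-$ to $B_+$ (equivalently on the "first factor" of the join), so $\G$ acts transitively on the sphere $\Sph(V_-)$ in the corresponding linear slice, and $B_-$ is a quotient of $\Sph(V_-)$. Then I would invoke the Appendix's classification of reducible polar representations (standard, special, exceptional) applied to the slice representation at $q_-$ in $M$, restricted to the invariant decomposition $V_-\oplus V_+$: exceptional representations cannot support the required $\Sph^1$ or $\Sph^3$ (or $\Sph^7$) sphere bundle structure by inspection of Table 9.2, leaving the standard and special cases. In each case Lemma~\ref{hopf} (applied to $B$, its cut locus, or the relevant sub-submanifolds, precisely as in the proof of Theorem~\ref{fixed} and in the inductive cases inside the proof of Lemma~\ref{hopf} itself) will identify $M$ near $B_-$ with a disc bundle that is a cone on a Hopf fibration $\Sph^\perp \to B_-$ or, when $B_-$ is a point, with a suspension. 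Concretely: the tubular neighborhood of $B_-$ has boundary sphere bundle equivariantly the normal sphere bundle, the tubular neighborhood of $B_+$ likewise, and the gluing map is controlled by the slice data, which by induction and the Hopf Lemma matches that of a rank one symmetric space with a polar action.

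Assembling this, once $B_+$ is known by induction to be a polar $\G$-space form equivariantly diffeomorphic to a rank one symmetric space (a sphere or projective space), and $B_-$ is identified as a homogeneous Hopf base or a point, the two tubular neighborhoods are glued along a sphere bundle in the only way compatible with the slice representations, giving $M$ equivariantly diffeomorphic to the join $\Sph^{m_-} * \Sph^n$ (when $B_-$ degenerates, as in Theorem~\ref{fixed}) or to a projective space obtained from $\Bbb P(V_-\oplus V_+)$ by the same chamber-transitivity gluing argument used in the standard and special cases of Lemma~\ref{hopf}. The equivariant diffeomorphism type is then pinned down by the reconstruction theorem of~\cite{GZ}, since section, polar group, isotropy groups and their slice representations all match those of the corresponding linear polar action on the rank one symmetric space; the Cayley plane is excluded precisely because its chamber system is the non-building $\CC_3$ case covered separately by Theorem~\ref{thmC3}.

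The main obstacle I expect is the case analysis dividing standard versus special reducible representations and, inside the special case, the subcase where $\K$ (the rank one kernel factor) acts transitively on $\Sph(V_-)$ but nontrivially on $V_+$ — this is exactly case (b) in the proof of Lemma~\ref{hopf}, where one must rule out $B_-$ being a $\CP^1$ rather than a point by a delicate argument using \cite{EH} on which irreducible summands of $V_+$ carry a trivial $\K$-action, together with the observation that the restriction of the fibration to a sphere is a homogeneous Hopf $\Sph^1$-fibration over a complex projective space forcing $\K$ to commute with the free $\Sph^1$-action. Verifying that the exceptional representations genuinely cannot occur — i.e., that Table 9.2 entries never admit the required sphere-bundle structure over a lower-dimensional polar space form — is the other place where care is needed, and is where the Appendix's correction of the oversight in the literature is essential.
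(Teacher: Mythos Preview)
Your high-level plan---induction on isolated nodes, dual decomposition $M = D(B_-)\cup D(B_+)$, Hopf Lemma~\ref{hopf} as the engine, Appendix case analysis---matches the paper's strategy. But you are missing the structural lemma that makes the whole thing run in the fixed-point-free case, and your account of the projective-section case is too sketchy.

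The gap is this: in Theorem~\ref{fixed} the map $\Sph^\perp_p \to B$ is automatically $\G$-equivariant because $p$ is a fixed point, and the slice hypothesis of Lemma~\ref{hopf} is immediate. Here $B_-$ is a positive-dimensional orbit, so the cut-locus map $\gamma_{p_-}\colon \Sph^\perp_{p_-}\to B_+$ is only $\G_-$-equivariant, and it is not at all obvious that $\G_-$ acting on $B_+$ is orbit equivalent to the full $\G$ action on $B_+$, nor that the slice representations of $(\G_-)_v$ and $(\G_-)_{\bar v}$ match. The paper supplies this via the \emph{Dual Generation Lemma}~\ref{dual generation}: using primitivity together with the fact that for a reducible action the face isotropy groups across the join satisfy $\G_{v_i}\G_{u_j}=\G_{u_j}\G_{v_i}$ as sets, one gets $\G = \G_{p_-}\G_{p_+}$, and from this the orbit equivalence and slice hypotheses of Lemma~\ref{hopf} follow. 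Without this step you cannot invoke Lemma~\ref{hopf}, and the subsequent Reduction Lemma~\ref{reduction} (showing the $\Gamma(p_\pm)$ are actually smooth submanifolds equivariantly diffeomorphic to discs or projective spaces) also depends on it. You should state and prove this first.

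Second, your treatment of the projective-section case (``glue along a sphere bundle in the only way compatible with the slice representations'') is not enough. The paper does not simply glue: once $\Gamma(p_\pm)$ are known to be projective spaces, one must analyze the kernels $\K_\pm\lhd\G$ of the $\G$-actions on $B_\pm$. The argument splits on $\dim B_- >2$ versus $\dim B_-=2$. In the first case one shows $\K_+$ contains a simple factor transitive on the normal spheres to $B_+$ (and hence on $B_-$), writes $\G=\K_+\cdot\L$, and builds the equivariant diffeomorphism $M\to\Bbb P(V_-\oplus V_+)$ by extending an $\L$-equivariant identification of $\Gamma(p_-)$ via $\H_+$-invariance; one then has to check $\K_0$ is trivial in the quaternionic case. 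In the second case $B_-=\CP^1$, $\G=\K_-\cdot\S^3$, and one must prove the $\S^3$ factor acts \emph{trivially} on $B_+$---this is a separate inductive argument over the connected components of $\Delta_+$ and is not the same as the case (b) inside the proof of Lemma~\ref{hopf} that you cite. The reconstruction theorem of \cite{GZ} is not invoked here; the equivariant diffeomorphism is built by hand.

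Finally, your induction anchor and the use of the linearity lemma~\ref{linearity} in the sphere-section case are correct, and your observation that exceptional representations from Table~9.2 are ruled out is also right (the paper notes they do not descend to complex or quaternionic projective spaces).
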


 Note, that when say $m_{+} \ge 1$, the slice representation at each
vertex of $\Delta_{+} \subset \Delta$ is reducible. In particular,
all vertex representations are reducible except possibly the one
corresponding to say $\Delta_-$ when it is a point.

\smallskip

The following is a key step based on the primitivity lemma
\ref{prim}

\begin{lem}[Dual Generation]\label{dual generation}
For any regular pair $p_{\pm} \in B_{\pm}$, the action of the
isotropy groups $\G_{p_{\pm}}$ restricted to $B_{\mp}$ is orbit
equivalent to the action of $\G$ restricted to $B_{\mp}$.
\end{lem}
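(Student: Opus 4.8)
The statement to prove is that for a regular pair $p_{\pm}\in B_{\pm}$, the action of $\G_{p_+}$ on $B_-$ (and symmetrically $\G_{p_-}$ on $B_+$) has the same orbits as the full group $\G$ on $B_-$. The plan is to exploit the Primitivity Theorem~\ref{prim} together with the fact that the chamber $\Delta = \Delta_-*\Delta_+$ is a spherical join, so that the vertices of $\Delta$ split into those lying in $\Delta_-$ and those lying in $\Delta_+$. Fix a chamber $C\subset\Sigma$ with $C/\G=\Delta$, and let $p_+$ lie in the open face $\Delta_+$ — concretely, take $p_+$ to be a regular point of $B_+$, so that $\G_{p_+}$ is (up to finite index) the product of the isotropy groups of the vertices of $\Delta_-$, since for a point in the interior of $\Delta_+$ all the ``constraints'' come from the $\Delta_-$-vertices (the $\Delta_+$-vertices see $p_+$ as a regular point of their join factor).

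First I would make precise the description of $\G_{p_+}$: for $p_+$ in the open face $\Delta_+$, every face of $C$ containing $p_+$ is spanned by $p_+$-together-with a subface of $\Delta_-$, so that $\G_{p_+}$ is generated by $\{\G_{v}: v \text{ a vertex of } \Delta_-\}$ — this is exactly the primitivity statement applied inside the slice representation at $p_+$ (Remark~\ref{folding}: $\G_{p_+}$ is generated by the face isotropy groups of any chamber of the slice chamber system $\mathscr{C}(\Sph^{\perp}_{p_+},\G_{p_+})$, and those faces correspond to the vertices of $\Delta_-$). Next, observe each such vertex $v$ of $\Delta_-$ satisfies $\G_v\subset \G_{q}$ for $q\in\Delta_+$ not adjacent to the bad face — more to the point, $\G_v$ fixes $p_-$ as well when $v$ is ``far'' from $p_-$, so that $\G_v$ acts on $B_-$. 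The point is that $B_- = \G\Delta_- = \G_{p_+}\Delta_-$: a chamber $\g C$ of $B_-$ (equivalently, $\g\Delta_-$) can be reached from $C$ by a gallery inside the chamber system of $B_-$, and by the proof of Primitivity~\ref{prim} this gallery is realized by a word in the face isotropy groups of the faces of $\Delta_-$, i.e.\ in elements of $\{\G_v : v\in\Delta_-\}\subset\G_{p_+}$. Hence $\G_{p_+}$ already moves $C$ onto every chamber of $B_-$, so $\G_{p_+}B_- = B_-$ and the $\G_{p_+}$-orbits on $B_-$ coincide with the $\G$-orbits (both equal the strata of $B_-^* = \Delta_-$).

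The one point that needs care, and which I expect to be the main obstacle, is the passage ``gallery in $B_-$'' $\leadsto$ ``word in $\G_v$, $v\in\Delta_-$, each of which lies in $\G_{p_+}$.'' One must check that the face isotropy groups appearing when folding a chamber of $\mathscr{C}(B_-,\G)$ along the faces of $\Delta_-$ are genuinely contained in $\G_{p_+}$ and not just in $\G$; this is where the join structure $\Delta=\Delta_-*\Delta_+$ is essential, since a face of $C$ of the form $p_+ * (\text{face of }\Delta_-)$ has isotropy group contained in $\G_{p_+}$, and folding $B_-$ only ever uses such faces. Concretely: the chamber system $\mathscr{C}(B_-,\G)$ has type $\M_{J}$ where $J$ indexes the vertices of $\Delta_-$, and by Theorem~\ref{prim} applied to $B_-$, $\G$ acting on $B_-$ is generated by $\{\G^{B_-}_{v} : v\in\Delta_-\}$; but $\G^{B_-}_v = \G_v\cap(\text{stabilizer of }B_-) \subset \G_v \subset \G_{p_+}$ for $v\in\Delta_-$. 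Therefore the subgroup of $\G$ generated by these face isotropy groups — which acts transitively on chambers of $B_-$ — is contained in $\G_{p_+}$, giving $\G_{p_+}\cdot C \supseteq$ all chambers of $B_-$, hence $\G_{p_+}$-orbits $=$ $\G$-orbits on $B_-$. The symmetric statement with $\pm$ swapped is identical. One should also note the regularity hypothesis on $p_-$ is not actually needed for the conclusion about orbits (it just ensures $\G_{p_-}$ has the clean product form); the essential input is regularity of $p_+$ inside $\Delta_+$ so that the generation statement for $\G_{p_+}$ is exactly generation by $\Delta_-$-vertex isotropy groups.
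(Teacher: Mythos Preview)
Your argument has a genuine gap at the key step. You assert that the face isotropy groups of the chamber $\Delta_-$ in the polar $\G$-manifold $B_-$ lie in $\G_{p_+}$, writing $\G^{B_-}_v = \G_v\cap(\text{stabilizer of }B_-) \subset \G_v$. But the face isotropy group of $B_-$ at the face $\Delta^{m_--1}_{u}$ of $\Delta_-$ is the full isotropy $\G_r$ at a generic point $r$ of that face \emph{viewed in $M$}; since $r$ is far more singular in $M$ than a generic point of the larger face $\Delta^{m_--1}_{u}*\Delta_+$, the containment goes the other way: $\G_r \supsetneq \G_u$. In fact $\G_r = \langle \G_u, \G_{p_-}\rangle$ by primitivity for the slice at $r$, and $\G_{p_-}\not\subset \G_{p_+}$ (lifts of the reflections $\r_{v_i}$ with $v_i\in\Delta_+$ lie in $\G_{p_-}$ and move $p_+$). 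So folding $\mathscr{C}(B_-,\G)$ along its own face isotropy groups does not produce elements of $\G_{p_+}$.

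What is actually needed is that the \emph{smaller} subgroup $\G_u\subset\G_r$ already acts transitively on the $u$-adjacency class of $\Delta_-$ in $B_-$, equivalently $\G_u\,\G_{p_-}=\G_r$ as sets. This is a nontrivial property of reducible polar representations, and it is precisely what the paper supplies --- in the equivalent pairwise form $\G_u\G_v=\G_v\G_u=\G_{u,v}$ for each pair $u\in\Delta_-$, $v\in\Delta_+$ --- by observing that the slice at a generic point of the codimension-two face $\Delta^{m_--1}_u*\Delta^{m_+-1}_v$ is a \emph{reducible cohomogeneity-one} polar representation, for which one singular isotropy group is transitive on the opposite singular orbit; this is immediate for standard sum representations and is checked by hand for the seven exceptional ones in Table~9.2. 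With this commutation, any word in the face isotropy groups of $\Delta$ rearranges into an element of $\G_{p_+}$ times one of $\G_{p_-}$, yielding $\G=\G_{p_+}\G_{p_-}$; orbit equivalence on $B_-$ then follows since $\G_{p_-}$ fixes $\Delta_-$ pointwise. Your outline, even if the face-isotropy misidentification were repaired, would require this same input.
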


\begin{proof}
By the primitivity theorem,  $\G_{p_-}$ is generated by the face
isotropy groups, $\G_{v_1}, \ldots, \G_{v_{m{_+}+1}}$ of the faces,
$\Delta{_-} * \Delta^{{m_+} -1}_{v_i}$ containing $\Delta_-$, and
similarly $\G_{p_+}$ is generated by the remaining face isotropy
groups, $\G_{u_1}, \ldots, \G_{u_{m_{-}+1}}$, namely of the faces,
$\Delta^{m_{-}-1}_{u_j} * \Delta_+$ containing $\Delta_+$. Note that
any face containing $\Delta_-$ is perpendicular to any face
containing $\Delta_+$. In particular, if $\G_{v,u}$ is the isotropy
group at an intersection point of two such faces with isotropy
groups $\G_v$
 and $\G_u$, the slice representation of $\G_{v,u}$ restricted to the normal sphere of its fixed point set is a reducible cohomogeneity one action with
 singular isotropy groups $\G_v$ and $\G_u$. As a
  special case of the primitivity theorem we already know that $\G_v$ and $\G_u$ generate $\G_{v,u}$.
  However, since the action is reducible we have that actually $\G_v \G_u = \G_u \G_v = \G_{v,u}$ as
  sets. Notice that this is equivalent to the fact that in the slice representation of $\G_{v,u}$, the isotropy group $\G_v$ is transitive
  on the opposite singular orbit and vice versa.

We now claim that  $\G = \G_{p_-} \G_{p_+}$. From the primitivity
lemma we know that any $g \in \G$ can be written as a word of
elements from $\G_{v_1}, \ldots, \G_{v_{m_{+}+1}}, \G_{u_1}, \ldots,
\G_{u_{m_{-}+1}}$. Using that $\G_{v_i} \G_{u_j} = \G_{u_j} \G_{v
_i}$ for all $i = 1, \ldots, m_{+}+1$ and
 $j = 1, \ldots, m_{-}+1$ we can rewrite any such word also as a word in the $\G_v$'s times a word in the $\G_u$'s, i.e., $\G = \G_{p_-} \G_{p_+}$.
 The same reasoning
 shows that $\G = \G_{p_+} \G_{p_-}$, and hence completes the proof of the lemma.
\end{proof}

 The above lemma will allow us to use the input from the previous
section. For this we let  $\Gamma(p_{\pm})$ be the set
consisting of all minimal geodesics from
regular points $p_{\pm}$ to $B_{\mp}$. In addition to viewing this as a set of geodesics, we will also view it as a subset of $M$ whose points are the points of all those geodesics. As such it can also be described as $\Gamma(p_\pm)=\G_{p_{\pm}}(p_\pm*\Delta_\mp) \subset M$. Note that $\Gamma(p_{-}) \cap
\Gamma(p_{+})$ is the set of all minimal geodesics joining $p_-$ and
$p_+$. Since $\G_{p_\pm}$ is independent of $p_{\pm}$,
we will use the notation  $\G_\pm$ instead.

It will also be useful to let  $\Gamma(p_{\pm})(r)$ denote the
subset  of $\Gamma(p_{\pm})$ at distance $r$ from $p_{\pm}$, and to
let  $\hat{\Gamma}(p_{\pm})$ denote the negative of the terminal
directions of the geodesics in $\Gamma(p_{\pm})$.

\begin{rem}
The following are immediate consequences of the Dual Generation
lemma \ref{dual generation}, and the decomposition of a section
$\Sigma \supset \Sigma_{\pm}$ corresponding to $\Delta = \Delta_- *
\Delta_+$.
\begin{itemize}
\item
The cut locus $C(B_{\pm}) = B_{\mp}$, and $B_{\pm}$ are at distance
$\pi/2$ from one another.
\item
$\Gamma(p_{\pm}) - B_{\mp}$ is a smooth submanifold of $M$
diffeomorphic to the open $\pi/2$ ball in the normal space, $T^{\perp}_{\pm}$ to $B_{\pm}$ at $p_{\pm}$
via the
exponential map.
\item
The map $\gamma_{p{\pm}}: \Sph^{\perp}_{p_{\pm}} \to B_{\mp}$ taking
a unit vector to the corresponding geodesic at time $\pi/2$ is
smooth, $\G_{\pm}$ equivariant and takes chambers to chambers.
 \end{itemize}

\end{rem}

Moreover, we have

\begin{lem}\label{fibration}
The map $\gamma_{p{\pm}}$ is a $\G_{\pm}$ equivariant
Hopf fibration.
\end{lem}

\begin{proof}
For the sake of simplicity we will use $\bar x$ to
denote the image $\gamma_{p{\pm}}( x)$ of $x\in
\Sph^{\perp}_{p_{\pm}}$. Since $\gamma_{p{\pm}}$ is a smooth $\G_{\pm}$ equivariant map that takes chambers to chambers and with the same orbit space, by the Hopf lemma \ref{hopf} it remains to verify that the slice representations of $\G_{\pm, v}$ and
$\G_{\pm, \bar v}$ are orbit equivalent. For this it suffices to show that the dimensions of the corresponding slices agree, or equivalently that corresponding principal orbits of isotropy groups have the same dimension. So let $x$ be a
point of  principal orbit type of the slice representation of
$\G_{\pm, v}$. We claim that $\dim(\G_{\pm, v}(x))=\dim(\G_{\pm, \bar v}(\bar
x))$. Note that $\G_{\pm, \bar v}=\G_{\pm, \bar x}\G_{\pm, v}=\G_{\pm,v}\G_{\pm, \bar x}$ where the latter follows from the Dual Generation lemma \ref{dual generation}. Therefore, the orbit $\G_{\pm, \bar v}(\bar
x)=\G_{\pm, \bar v}/\G_{\pm, \bar x}=\G_{\pm,  v}\G_{\pm, \bar
x}/\G_{\pm, \bar x}$. In particular, $\G_{\pm,  v}$ acts
transitively on this orbit with isotropy group $\G_{\pm,  v}\cap
\G_{\pm, \bar x}$. However, since clearly $\G_{\pm,  v}\cap \G_{\pm, \bar
x}= \G_{\pm,  x}$, this completes the proof.
\end{proof}

 The following plays a pivitol role in the geometric and equivariant description of $M$:

\begin{lem} [Reduction]\label{reduction}
For all regular $p_{\pm} \in B_{\pm}$, $\Gamma(p_{\pm})$ are
$\G_{\pm}$ invariant submanifolds of $M$. Moreover,

$\bullet$ $\Gamma(p_{\pm})$ is $\G_{\pm}$ equivariantly
diffeomorphic to $\Disc^{\perp}_{p_{\pm}}$

 \no if the section is a sphere, and

$ \bullet$ $\Gamma(p_{\pm})$ is $\G_{\pm}$ equivariantly
diffeomorphic to a complex or quaternionic projective space

\no if  the section is a projective space.
\end{lem}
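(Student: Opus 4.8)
The plan is to realise $\Gamma(p_\pm)$ as a mapping cylinder of $\gamma_{p_\pm}$ and then to pin down $\gamma_{p_\pm}$ using the Hopf Lemma \ref{hopf}, the outcome depending only on the type of the section. As recorded in the remark preceding the lemma, $\Gamma(p_\pm)$ is the residue of $p_\pm$, hence $\G_\pm$-invariant since $\G_\pm=\G_{p_\pm}$ fixes $p_\pm$; moreover setwise $\Gamma(p_\pm)=\exp_{p_\pm}\bigl(\overline{B_{\pi/2}(0)}\bigr)$, with $\Gamma(p_\pm)\setminus B_\mp$ an embedded open disc via $\exp$ and $B_\mp$ its complementary end. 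Thus $\Gamma(p_\pm)$ carries the natural structure of the closed manifold $\Disc^\perp_{p_\pm}\cup_{\gamma_{p_\pm}}B_\mp$ obtained by gluing the normal disc to $B_\mp$ along its boundary sphere via $\gamma_{p_\pm}$, on which $\G_\pm$ acts smoothly (linearly on the disc factor) and the two smooth pieces fit together smoothly along $B_\mp$ in the standard way; in particular $\Gamma(p_\pm)$ is a $\G_\pm$-invariant submanifold of $M$. Recall that it was shown just before the lemma that $\gamma_{p_\pm}\colon\Sph^\perp_{p_\pm}\to B_\mp$ is $\G_\pm$-equivariant, chamber-preserving, and has orbit-equivalent slice representations at $v$ and $\gamma_{p_\pm}(v)$, so by Lemma \ref{hopf} it is either a $\G_\pm$-equivariant diffeomorphism or $\G_\pm$-equivalent to a genuine Hopf bundle, the octonionic fibre $\Sph^7$ being excluded by the standing hypothesis of Theorem \ref{nonexcep}.

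Which alternative occurs is read off from a section. Restricting $\gamma_{p_\pm}$ to a section of the polar sphere $\Sph^\perp_{p_\pm}$ gives, as in the proof of Lemma \ref{hopf}, a covering onto a section $\Sigma_\mp$ of $B_\mp$; and computed in the constant-curvature model section $\Sigma$ through $p_\pm$, the time-$\pi/2$ geodesic map sends the normal subsphere of $\Sigma_\pm$ at $p_\pm$ onto the dual subsection $\Sigma_\mp$. If $\Sigma$ is a sphere, then $\Sigma_\pm$ are great subspheres and this model map is a diffeomorphism $\Sph^{m_\mp}\to\Sigma_\mp=\Sph^{m_\mp}$; since the restriction of a genuine Hopf bundle to a section of the total space is a non-trivial covering of the corresponding section of the base (cf. the proof of Lemma \ref{hopf}), $\gamma_{p_\pm}$ must in fact be a $\G_\pm$-equivariant diffeomorphism, whence $\Gamma(p_\pm)=\Disc^\perp_{p_\pm}\cup_{\mathrm{diffeo}}B_\mp$ is $\G_\pm$-equivariantly diffeomorphic to $\Disc^\perp_{p_\pm}$ (reparametrising $\exp$ to the unit disc). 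If $\Sigma$ is a projective space, pass to the universal cover $\tilde\Sigma=\Sph^k$, whose deck transformation $\a$ is $-\mathrm{id}$ and which preserves the great subspheres $\tilde\Sigma_\pm$ acting there antipodally; hence both $\Sigma_\pm=\RP^{m_\pm}$, so the restriction of $\gamma_{p_\pm}$ to a section is a non-trivial covering $\Sph^{m_\mp}\to\RP^{m_\mp}$, $\gamma_{p_\pm}$ is not a diffeomorphism, and is therefore $\G_\pm$-equivalent to a Hopf bundle $\Sph^{2n+1}\to\CP^n$ or $\Sph^{4n+3}\to\HP^n$. By the standard presentation of a projective space as a disc attached to the next smaller one along the Hopf map, $\Gamma(p_\pm)=\Disc^\perp_{p_\pm}\cup_{\mathrm{Hopf}}B_\mp$ is then $\G_\pm$-equivariantly diffeomorphic to $\CP^{n+1}$, respectively $\HP^{n+1}$.

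For the final assertion, $B_\pm$ are polar space forms of spherical type by the discussion opening this section. Chamber-connectedness follows because $\gamma_{p_\pm}$ is a $\G_\pm$-equivariant, chamber-preserving surjection from $\Sph^\perp_{p_\pm}$ — whose chamber system is a building by Remark \ref{polar rep}, in particular connected — onto $B_\mp$; such a map carries galleries to galleries, so $\mathscr{C}(B_\mp,\G_\pm)$, and hence (using the Dual Generation Lemma \ref{dual generation} to match the $\G_\pm$- and $\G$-orbit structures on $B_\mp$) also $\mathscr{C}(B_\mp,\G)$, is connected.

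The main obstacle I expect lies in the second paragraph: carefully identifying the restriction of $\gamma_{p_\pm}$ to a section with the explicit time-$\pi/2$ geodesic map in the model section, so as to decide diffeomorphism versus non-trivial covering, and then upgrading the topological identifications $\Disc^\perp_{p_\pm}\cup_{\gamma_{p_\pm}}B_\mp\cong\Disc^\perp_{p_\pm},\ \CP^{n+1},\ \HP^{n+1}$ to $\G_\pm$-equivariant diffeomorphisms; by contrast the Cayley case never intervenes, being excluded by the hypothesis of Theorem \ref{nonexcep}.
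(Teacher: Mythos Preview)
Your overall strategy --- applying the Hopf Lemma to $\gamma_{p_\pm}$ and then reading off the mapping-cylinder structure of $\Gamma(p_\pm)$ --- matches the paper's, and your device of restricting to a section to distinguish the diffeomorphism from the genuine Hopf case is a clean alternative to the paper's count of geodesics in $\Gamma(p_-)\cap\Gamma(p_+)$. There are, however, two genuine gaps.

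First, the exclusion of the $\Sph^7$ fibre is not a hypothesis of Theorem~\ref{nonexcep}; the phrase ``excluding the Cayley plane'' there is part of the \emph{conclusion}. The Hopf Lemma itself allows $\Sph^{15}\to\Sph^8$, so you must rule out the $\Sph^7$ fibre by an independent argument. The paper does this (in the projective-section case, where it matters) by observing that if the fibre were $\Sph^7$ then, since neither $B_\pm$ is a point by the fixed-point-free assumption, both $B_\pm$ would be homotopy $8$-spheres and $M$ a $24$-manifold with nontrivial homology only in degrees $0,8,16,24$ --- impossible by a classical topological theorem. Without this step your identification of $\Gamma(p_\pm)$ with $\CP^{n+1}$ or $\HP^{n+1}$ is incomplete.

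Second, and more fundamentally, the assertion ``the two smooth pieces fit together smoothly along $B_\mp$ in the standard way; in particular $\Gamma(p_\pm)$ is a $\G_\pm$-invariant submanifold of $M$'' is precisely what the paper singles out as \emph{the key issue}, and your mapping-cylinder description does not resolve it. The mapping cylinder of a Hopf map is abstractly a smooth manifold, but to conclude that $\Gamma(p_-)\subset M$ is a smooth \emph{embedded} submanifold near $B_+$ you must show that the set $\hat\Gamma(p_-)$ of incoming unit directions along $B_+$ is a smooth sub-bundle of $T_1^\perp B_+$ with \emph{linear} (great-sphere) fibres. This does not follow from $\gamma_{p_-}$ alone: over a regular point $p_+\in B_+$ the fibre $\hat\Gamma(p_-)\cap\Sph^\perp_{p_+}$ equals $\gamma_{p_+}^{-1}(p_-)$, and it is the fact that $\gamma_{p_+}$ is \emph{also} a Hopf fibration that makes this a great $1$- or $3$-sphere in $\Sph^\perp_{p_+}$. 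The paper exploits exactly this interplay between $\gamma_{p_-}$ and $\gamma_{p_+}$, then uses invariance to pass from regular $p_+$ along $\Delta_+$ to all of $B_+$. Your argument never invokes $\gamma_{p_+}$ when analysing $\Gamma(p_-)$, so the submanifold claim remains unsupported.
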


\begin{proof}  The key issue is to see that $\Gamma(p_{\pm})$ are submanifolds as claimed. From the remark above this is clear except along $B_{\mp} \subset \Gamma(p_{\pm})$.

 From the Hopf Lemma \ref{hopf} and Lemma \ref{fibration} above we know that  $\gamma_{p_{\pm}}: \Sph_{p_{\pm}}^{\perp} \to B_{\mp}$ is either a diffeomorphism or a Hopf map.
 Clearly, $\Gamma(p_-) \cap \Gamma(p_+)$ is in bijective correspondence with the fiber of $\gamma_{p_-}$ over $p_+$ and the fiber of $\gamma_{p_+}$ over $p_-$, when viewing it as the set of minimal geodesics between $p_-$ and $p_+$. In particular,
 both maps are of the same type, corresponding to $\Gamma(p_-) \cap \Gamma(p_+)$ being either one geodesic, an $\Sph^1$, an $\Sph^3$ or an $\Sph^7$ family of geodesics. Moreover, this description also shows that the linear span of the initial vectors of the geodesics in $\Gamma(p_-) \cap \Gamma(p_+)$ at both $p_-$ and $p_+$ are linear subspaces of the corresponding  normal spaces to $B_{\mp}$.

 Now consider the initial vectors of the geodesics in $\Gamma(p_-)$ starting at $B_+$. This subset $\hat{\Gamma}(p_{-})$ of the unit normal bundle $T_1^{\perp}B_+$ of $B_+$ is canonically
 a smooth submanifold diffeomorphic to $\Sph_{p_-}^{\perp}$ via say $\Gamma(p_{-})(\frac{\pi}{2} -1)$. In particular, it is a smooth section of the unit normal bundle to $B_+$ when each $\Gamma(p_-) \cap \Gamma(p_+)$ is just one geodesic, or equivalently $\gamma_{p_-}: \Sph^{\perp}_{p_-} \to B_+$ is a diffeomorphism. In the other cases it is the unit sphere bundle of a smooth linear subbundle of the normal bundle to $B_+$ and $\gamma_{p_-}: \Sph^{\perp}_{p_-} \to B_+$ is a Hopf fibration. By equivariance then clearly each $\Gamma(p_{\mp})$ is $\G_{\mp}$ equivariantly diffeomorphic to a complex or quaternionic space, or to the Cayley plane if $\Gamma(p_-) \cap \Gamma(p_+)$ is an $\Sph^7$ family of geodesics. Since by assumption neither $B_{\pm}$ is a point the latter case does not appear. Indeed, if so both $B_{\pm}$ would be homotopy
8-spheres. Moreover, from the geometric decomposition and Poincar\'{e} duality it follows that $M$ would be a 24-dimensional manifold with integral cohomology algebra a truncated polynomial algebra with generator in degree $8$. This contradicts a
well-known topological theorem (cf. [Ha] page 498, Corollary 4).

\end{proof}

\smallskip

Having dealt with all cases where the Diagram for the Coxeter matrix
has no isolated nodes, and where the action has fixed points, we
assume from now on that $B_-$ is an orbit corresponding to an
isolated node of the diagram. Thus, we will use a decomposition
\begin{center}
$\Delta = \Delta_- * \Delta_+$, where $\Delta_- = \Delta^0 = p_-$ is
a vertex,
\end{center}
\no corresponding to an isolated node, and $\Delta_+$ corresponds to
the rest of the diagram. In particular,
\begin{center}
$\G_+$ acts transitively on $B_-$, as well as on each normal sphere
$\Sph_{+} ^{\perp}$ to $B_+$ along $\Delta_+$.
\end{center}

\bigskip

We are now ready to complete the proof of Theorem \ref{nonexcep}:

\begin{proof} [Proof of Theorem \ref{nonexcep}] We first consider the case where the section $\Sigma = \Sigma_- * \Sigma_+$ is a
sphere:

\no  By Lemma \ref{reduction} the $\G_{\mp}$ action on $B_{\pm}$
is equivariantly equivalent to the slice representation on the
normal sphere $\Sph^{\perp}_{p_{\mp}}$ which we will denote by $\Sph(V_{\pm}) = \Sph_{\pm}$.  Note that
$\G$ as well as $\G_+$ acts transitively on $B_-$ since $\Delta _-$ is a point. In particular $\G$ acts linearly on $\Sph_-$ identified with $B_-$. If also the $\G$ action on $B_+$ when identified with $\Sph_+$ is linear, we claim that the induced sum action on $\Sph(V_- \oplus V_+)$ is equivalently diffeomorphic to the $\G$ action on $M$. To see this, choose $p_-^* \in \Sph_-$ with $\G_{p_-^*} =  \G_{p_-}= \G_-$ and a $\G_-$ equivariant diffeomorphism from $\Gamma
(p_-)$ to the join ${p_-^*} * \Sph_+\subset \Sph_-*\Sph_+$. This extends to a well defined $\G$ equivariant diffeomorphism from $M$ to $\Sph_-*\Sph_+$ by invariance. The proof is completed now by Lemma \ref{ext}, since the polar $\G$-action on $B_+$ is linear orbit equivalent to the $\G_-$ action.

Now suppose $\Sigma$ is a projective space:

\no By Lemma \ref{reduction} the Cayley plane cannot appear, and moreover $B_\pm$ are both projective spaces over $\Bbb F$, with $\Bbb F=\C$ or $\Bbb H$. In fact, for each regular $p_{\pm} \in B_{\pm}$, the slice representation of $\G_{\pm}$ restricted to the normal space $V_{\pm}$ at $p_{\pm}$ to $B_{\pm}$ preserve an $\Bbb F$ structure and descends to a polar action on $B_{\mp}$ orbit equivalent to the restriction of $\G$ to $B_{\mp}$. This will guide us to the construction of a representation of $\G$ (or frequently an extension of it) on $V_+ \oplus V_-$ preserving an $\Bbb F$ structure, which together with the scalar multiplication of $\Bbb F^*$ is polar, such that the induced $\G$ action on the projective space
$\Bbb {FP}(V_+\oplus V_-)$ is equivariantly equivalent to that of $\G$ on $M$.

We divide the proof into two cases corresponding to (a) $\dim B_- > 2$ and (b) $\dim B_- = 2$ (noting that  $\dim B_- < 2$ is covered by the fixed point case).

Throughout $\K_{\pm} \lhd \G$ will denote the identity component of the kernel of the $\G$-action restricted to $B_{\pm}$.

In case (a), we make the following claim: There is a normal subgroup $\H \lhd \K_+$ acting transitively on the normal spheres to $B_+$ such that $\K_+=\H\cdot \K_0$, where $\K_0 = \1, \S^1$ or $\S^3$, is the identity component of the kernel of the action by $\K_+$ on $B_-$. We will see later on that $\K_0=\1$  when $\Bbb F=\Bbb H$, and $\K_0=\1$ or $\S^1$ when $\Bbb F=\Bbb C$.

To prove the claim note that $\dim \Delta_+ \ge 1$. Choose a pair of vertices  $v_1, v_2\in \Delta_+$ and consider the slice representations at the vertices.  By Lemma \ref{general} below  it follows, by assumption on dim $B_-$, that up to a finite cover, there is a normal subgroup of rank at least $2$ (of simple type), say $\H_{v_i}\lhd \G_{v_i}$ for $i=1, 2$, acting transitively on the normal sphere to $B_+$ at $v_i$ but trivially on
the slice tangent to $B_+$. Clearly each $\H_{v_i}$  is also a normal (simple) subgroup in the principal isotropy group $\G_+$ of the $\G$ action on $B_+$. Since $\G_+$ (modulo kernel) has a unique normal simple subgroup (of rank at least 2) acting transitively on the normal sphere, it follows that  $\H_{v_1}=\H_{v_2}\lhd \G_+$, and will be denoted by $\H $. By primitivity
$\G=\langle \G_{v_1},\G_{v_2}\rangle $, and thus $\H$ is a normal subgroup in $\G$. Therefore $B_+$ is fixed pointwise by $\H$. In particular,
$\H \lhd \K_+ $ acts transitively on $B_-$ with kernel $\K_0$, and thus $\K_+=\H\cdot \K_0$.

Since $\K_+\lhd \G$, we can write $\G=\K_+\cdot \L=\H\cdot \K_0\cdot \L$ where $\L\lhd \G$ is a connected normal subgroup which clearly acts  almost effectively on $B_+$ in a polar fashion.    Since $B_-$ is a projective space over $\Bbb C$ or $\Bbb H$ of dimension at least $4$, and therefor any almost effective transitive action on it is the  linear action by $\SU(k+1)$ or $\Sp(k)$ for some $k\geq 2$ (cf. \cite{On}, pp. 264-5), we conclude that $\H=\SU(k+1)$ or $\Sp(k)$. Thus for the kernel of the $\G =\H\cdot \K_0\cdot \L$ action on $B_-$, we get $\K_-=\K_0\cdot\L$. In particular, $\K_0=\K_-\cap \K_+$ fixes both $\B_\pm$ pointwise, and acts almost effectively on the normal spheres to $B_\pm$ (when non trivial), preserving all $\Gamma(p_-) \cap \Gamma(p_+)$.

In summary the ``face" isotropy groups $\G_\pm$ and their kernels $\K_\pm$ can be read off from the following group diagrams, where the vertical inclusions are left out:

\begin{equation}\label{Ldiagram}
\begin{split}
\xymatrix{
& \G =\H\cdot \K_0\cdot \L\ar@{<-}[dr]^{ } \ar@{<-}[dl]_{ } & \\
\G_{-}=\H_-\cdot \K_0\cdot \L \ar@{<-}[dr] & & \G_+=\H\cdot \K_0\cdot \L_+  \ar@{<-}[dl] \\
& \G_-\cap \G_+= \H_-\cdot \K_0\cdot \L_+ &\\
\K_-= \K_0\cdot \L \ar@{<-}[dr] & & \K_+= \H\cdot \K_0  \ar@{<-}[dl] \\
& \K_-\cap \K_+= \K_0 & }
\end{split}
\end{equation}
\noindent Here  $\H _-$, respectively $\L_+$, is the principal isotropy group of $\H$ on $B_-$ and of $\L$ on $B_+$ respectively. Thus corresponding to the possible $\H$ above, we have $\H_-=\U(k)$,  $\Sp(k-1)\S^1$ or $\Sp(k-1)\Sp(1)$. Let  $\H_0\lhd \H_-$ denote the normal factor $\S^1$ or $\Sp(1)$ of $\H_-$. Thus, $\H_0 = \S^1$ corresponds to $\Bbb F = \Bbb C$ and $\H_0 = \Sp(1) $ to $\Bbb F = \Bbb H$. For this we have:

\smallskip
$\bullet$ $\H_0$ acts freely on $\Sph(V_-)$ along the Hopf fibers.

\smallskip
To see this, consider  for any fixed $p_+ \in B_+$, the Hopf map $\gamma_{p_+}: \Sph^{\perp} _{p_+} \to B_-$. From the transitive actions by $\H$ on $\Sph^{\perp}_{p_+}$ descending to $B_-$ we know that $\H_0$ acts freely along the fibers of $\gamma_{p_+}$. These fibers for $p_- \in B_-$ are in one to one correspondence with $\Gamma(p_-)\cap \Gamma(p_+)$, $p_- \in B_-$. Turning things around, these are also in one to one correspondence with the fibers of the Hopf map $\gamma_{p_-}: \Sph^{\perp} _{p_-} \to B_+$ where now $p_-$ is fixed. This proves the assertion and has the following consequence:

$\bullet$
$\K_0$ is trivial when  $\Bbb F = \Bbb H$ .

Indeed, since both $\K_0$ and $\H_0= \Sp(1)$ act (almost) effectively on $\Sph(V_-)$ this follows from Lemma 7.9 below, because the slice representation of $\G_-$ on $V_-$ descends to a fixed point free action on $B_+$ with $\H_0 \cdot \K_0\lhd \H_-\cdot \K_0$ in its kernel.
\smallskip


We now proceed to set up a projective model, $\Bbb {FP}(V_+\oplus V_-)$ with a linear polar $\G$-action with the field $\Bbb F=\Bbb C, \Bbb H$ as indicated in our strategy above:

Consider the product representation on $V_+\oplus V_-$ by $\K_+ \times \K_-=\H\cdot \K_0 \times \K_0\cdot \L$ which on each summand preserves an $\Bbb F$ structure, i.e., descends to a polar action on $\Bbb {FP}(V_+)$, respectively on $\Bbb {FP}(V_-)$. When $\K_0$ is trivial, obviously the sum  $\Bbb F$ structure is preserved as well, and the  $\K_+ \times \K_- = \H \times \L$ action descends to a polar action on $\Bbb {FP}(V_+\oplus V_-)$. When $\K_0 = \S^1$ and hence $\Bbb F=\Bbb C$, the action by the diagonal $\S^1 = \Delta(\K_0) \lhd \K_0\times \K_0$ defines a $\Bbb C$ structure on the sum preserved by $\K_+ \times \K_-$ descending to a polar action by $\G =\H\cdot \K_0\cdot \L$ on $\Bbb {FP}(V_+\oplus V_-)$. These are the models.

Given such a model, fix a point $p_-\in B_-$, and choose a point $p^*_-\in \Bbb {FP}(V_+)$ so that $\G_{p^*_-}=\G_{p_-}$. By Lemma 7.5   $\Gamma (p_-)$ is $\G_{p_-}$-equivariantly diffeomorphic to the linear projective subspace of $\Bbb {FP}(V_+\oplus V_-)$ containing $p^*_-$ and $\Bbb {FP}(V_-)$. As in the spherical section case this extends to a well defined $\G$ equivariant diffeomorphism from $M$ to $\Bbb {FP}(V_+\oplus V_-)$ by invariance, and we are done.
\smallskip

Finally, let us consider the only remaining case (b) where  $B_-=\C\Bbb
P^1$ and $B_+$ is a complex projective space of real dimension at
least $4$, since the cases of dim $B_+\leq 2 $ reduce to the fixed point case, because dim $\Delta^+\geq 1$.

Since $\G$ acts transitively on $B_- = \CP^1 = \Sph^2$ we can write, $\G=\K_- \cdot \S^3$,  with $\K_-$ the
kernel of the action on $B_-$. Hence, $\G_-=\K_- \cdot \S^1$,
where $\S^1\subset \S^3$. By Lemma \ref{dual generation} the subaction of $\G_-$ is orbit equivalent to the $\G$-action on $B_+$. It follows from Lemma \ref{kernels} below that the factor $\S^3 $ acts  trivially on $B_+$, that is $\S^3\lhd \K_+$, the kernel of the $\G$  action on $B_+$. Therefore, the slice subrepresentation of $\K_+\vartriangleright \S^3$ on $V_+\cong \Bbb C^2$ descends to the transitive action on $B_-$. This implies that $\K_+=\S^3$ or $\U(2)$, and accordingly $\K_0=\K_+\cap \K_-=\1$ or $\S^1$. We are now in a situation similar to case (a) with $\Bbb F=\Bbb C$, and hence $M$ is
$\G$-equivariantly diffeomorphic to the complex projective space $\Bbb P(V_+ \oplus V_-)$. This completes the proof.
\end{proof}

In conclusion, here are the facts we used about representations in the proof of Theorem \ref{nonexcep}:

\begin{lem}\label{general}
\label{general} Let $\rho: \G\to \SO(V_0\oplus \cdots \oplus V_k)$, be a reducible polar representation, where the $V_i$ are irreducible $\G$-modules ($k\geq 1$) . Suppose the $\G$-action on the unit sphere $\Bbb S(V_0\oplus \cdots \oplus V_k)$ descends to a polar action on the projective space $\Bbb {FP}(V_0\oplus \cdots \oplus V_k)$ where $\Bbb F=\Bbb C$ or $\Bbb H$.  If $\dim V_0\geq 5$ and $\dim V_0/\G = 1$, then there is a normal simple subgroup $\H\lhd \G$ of rank at least $2$ acting transitively on the unit sphere $\Bbb S(V_0)$ but trivially on $V_1\oplus \cdots \oplus V_k$.
\end{lem}

\begin{proof}
Since $\G$ is transitive on $\Sph(V_0)$, by the list of transitive actions on the spheres it follows that, $\G=\H\cdot \G'$, where $\H$ is a simple normal subgroup of $\G$ acting transitively on $\Sph(V_0)$, with principal isotropy group $\H_0$. For dimension reason, $\text{rank } \H\geq 2$. Moreover, $\H$ is a special unitary group or a sympletic group, since it acts transitively on the projective space $\Bbb{FP} (V_0)$.

We argue by contradiction. Assume $\H$ acts nontrivially on $V_i$ for some $i\geq 1$. For the restricted reducible polar representation of $\G$ on $V_0\oplus V_i$, since the principal isotropy group of $\G$ on $V_0$ is  $\H_0\cdot \G'$, by \cite{Be} Theorem 2   it follows that $\G$ is orbit equivalent to $\H_0\cdot\G'$ on $V_i$,  hence, by \cite{Da}, orbit equivalent to the isotropy representation of an irreducible symmetric space. Note that $\H_0\varsubsetneq \H$ is not a normal subgroup, and by assumption $\H$ is nontrivial on $V_i$. By the list in \cite{EH} it follows that,  if dim $V_i/\G\geq 2$, then $\H$ is $\Spin(8)$ or $\Spin(7)$ which is neither a unitary nor a symplectic group,  contradiction.

It remains to consider the case of dim $V_i/\G=1$, i.e., $\G$ acts transitively  on $\Bbb S(V_i)$. By the assumption that $\H$
acts non-trivially on $V_i$,  from the list of transitive actions on spheres it follows  that:

\smallskip
$\bullet$ The rank at least $2$ simple group $\H$ acts transitively on $\Sph(V_i)$. Furthermore, by dividing the kernel of the $\G'$-action, the transitive action of $\H\cdot \G'$ on $\Sph(V_i)$ reduces to an almost effective action of $\H\cdot \K_0$ where $\text{rank } \K_0\leq 1$.
\smallskip

\no Note that $\text{dim } \Sph(V_i )\geq 4$, since a rank at least $2$ simple group can not act nontrivially on a lower dimensional spheres. Recall that $\H_0\cdot \G'$ is orbit equivalent to the $\H\cdot \G'$-action on $V_i$, i.e., transitive on $\Sph(V_i)$.  For dimension reasons it follows that the $\H_0$ action is also transitive on $\Sph(V_i)$ since the $\G'$-action modulo kernel reduces to the $\K_0$-action where $\text{rank } \K_0\leq 1$.

  In summery, $\H$ acts transitively on $\Sph(V_i)$ orbit equivalent to the subaction of  $\H_0\varsubsetneq \H$ on $\Sph(V_i)$ where $\H/\H_0\cong\Sph(V_0)$. Recall that $\H$ is a special unitary group or a symplectic group. From the list of transitive actions on spheres it follows that $\H=\SU(4)$ acting on
$V_0\oplus V_i=\Bbb R^8\oplus \Bbb R^6$, via the standard complex representation and the real representation of $\SU(4)=\Spin(6)$ on the summands (cf. \cite{Be}). Since the $\Spin(6)$ action on $\Bbb R^6$ does not descend to the complex projective plane, a contradiction is reached.
\end{proof}

In the proof of the following result we will freely use the language and results about chamber systems and their universal covers developed in section 4, translated to the current setting.

\begin{lem}\label{kernels} Let $\rho: \G\to \SO(V)$ be an almost effective $\Bbb C$-linear polar  representation, descending  to a polar action on the projective space $\Bbb {CP}(V)$.

Suppose $\G= \K\cdot \S^1\subset \G'= \K\cdot \S^3$, and assume moreover that  $\G'$ acts on the projective space extending the $\G$-action with the same orbits. Then  the factor $\S^3$ of $\G'$ is in the kernel of the action on $\Bbb {CP}(V)$.
\end{lem}

\begin{proof} We first prove that the factor $\S^3$ action is trivial on $\Bbb {CP}(V_i)$ for each irreducible summand $V_i\subset V$. This is clear for any rank $1$ summand (summand $V_i$ with $\dim V_i/\G = 1$), since   $\G'\supset \G$ acts transitively on $\Bbb {CP}(V_i)$, hence modulo kernel it is either a unitary group or a symplectic group. If the rank of a summand is $2$, it follows immediately from [Uc].
If the rank of a summand is at least $3$, by
section 4,   the
action of  $\G'\supset \G$ lifts to a polar representation by an extension by the deck transformation group $\S^1$, i.e., $\S^1\cdot \G'$,
acting on the universal cover, i.e., $\Sph(V_i)$, orbit equivalent to the $\S^1\cdot\G$-representation.
By the classification of [EH] it follows that, the $\S^3$ factor as well as the factor $\S^1\subset \S^3$ must act trivially on $\Sph(V_i)$. In general, suppose $\S^3$
acts trivially on both $\Bbb {CP}(V_i)$ and $\Bbb {CP}(V_j)$, then $\S^3$ acts on $\Bbb {CP}(V_i\oplus V_j)$ with fixed point set the disjoint union $\Bbb {CP}(V_i) \bigsqcup\Bbb {CP}(V_j)$. Since  the space of geodesics $\Gamma (p_i) \cap \Gamma(p_j)$ joining $p_i\in \Bbb {CP}(V_i)$ and $p_j\in \Bbb {CP}(V_j)$ is an $\Sph^1$ family of geodesics, $\S^3$ must act trivially on that as well for all
$p_i, p_j$. Thus, $\S^3$ acts trivially on  $\Bbb {CP}(V_i\oplus V_j)$. By induction it follows that the $\S^3$ action is trivial on $\Bbb {CP}(V)$.
\end{proof}

The following is probably well known

\begin{lem}
Let $\rho: \G\to \SO(V)$ be an almost effective representation descending to $\Bbb {HP}(V) = \HP^n$. Then the kernel of the action on $\Bbb {HP}(V)$ is contained in $\Sp(1)$.
\end{lem}

\begin{proof}
Identify  $V$ with the tangent space at a point $p\in \HP^{n+1}$. The representation gives rise to an isometric action on $\HP^{n+1}$ fixing $p$ and with the induced action on $\HP^{n}$ identified as the cut locus of $p$ in $\HP^{n+1}$. Since the subgroup of the isometry group of $\HP^{n+1}$ that fixes $\HP^{n}$ is $\Sp(1)$, the claim follows.
\end{proof}

\providecommand{\bysame}{\leavevmode\hbox    
to3em{\hrulefill}\thinspace}                                            


\begin{thebibliography}{99999}                                    


\bibitem[Al]{Al} M. Alexandrino,
On polar foliations and fundamental group.  \emph{Results Math.}
\textbf{60} (2011), no. 1-4, 213-223.

\bibitem[AT]{AT} M. Alexandrino and D. T\"oben,
Singular Riemannian foliations on simply connected spaces.
\emph{Differential Geom. Appl.} \textbf {24} (2006), 383--397.

\bibitem[Be]{Be}
I. Bergmann, Reducible polar representations. \emph{Manuscripta
Math.} {\bf 104} (2001), 309--324.




\bibitem[Br]{Br} W. Browder,
Higher Torsion in H-Spaces, \emph{Trans. Amer. Math. Soc.} {\bf 108}  (1963), 353--375.


\bibitem[BSp]{BSp}
K. Burns and R. Spatzier,  On topological Tits buildings and their
classification, \emph{Inst. Hautes etudes Sci. Publ. Math.}  {\bf65}
(1987), 5-34.


\bibitem[CL]{CL} R. Charney and A. Lytchak,
Metric characterizations of spherical and Euclidean buildings,
\emph{Geom.~Topol.} {\bf 5} (2001), 521-550.

\bibitem[Ch]{Ch} J. Cheeger,
Some examples of manifolds of nonnegative curvature, \emph{J. Differential Geometry} \textbf{8} (1973), 623-628.



\bibitem[Da]{Da} J. Dadok, Polar coordinates induced by actions of compact Lie groups, \emph{ Trans. Amer. Math. Soc.}  \textbf{288}  (1985),  125--137.


\bibitem[Dav]{Dav} M.W. Davis, \emph{The geometry and topology of Coxeter groups},  London Mathematical
Society Monographs Series {\bf 32}. Princeton University Press,
Princeton, NJ, 2008.



\bibitem[De]{De} O. Dearricott, A 7-manifold with positive curvature,  \emph{Duke Math. J.} {\bf 158} (2011), 307-346.

\bibitem[EH] {EH}
J.-H. Eschenburg and E. Heintze,  On the classification of polar
representations, \emph{Math. Z.} {\bf 232} (1999), 391--398.

\bibitem[FG]{FG}
F. Fang and K. Grove, Reflection groups in non-negative curvature, \emph{J. Differential Geometry} {\bf 102} (2016), 179--205.

\bibitem[FGT]{FGT}
F. Fang, K. Grove and G. Thorbergsson, Rank Three Geometry and Positive Curvature, \emph{Comm. Anal. Geom}, {\bf 24} (2016), 487--520.

\bibitem[FKM]{FKM}
D. Ferus, H. Karcher, and H.F M\"uzner,  Cliffordalgebren und neue
isoparametrische Hyperfl\"{a}chen, \emph{Math. Z.} {\bf 177} (1981),
479--502.


\bibitem[GK]{GK}
C.~Gorodski and A.~Kollross, Some remarks on polar actions,
\emph{Ann. Global Anal. Geom.} {\bf 49} (2016), 43--58.


\bibitem[Gr]{Gr}
K.~Grove, Developments around positive sectional curvature,
\emph{Surveys in differential geometry. Vol. XIII. Geometry, analysis, and
algebraic geometry: forty years of the Journal of Differential
Geometry}, 117--133, Surv. Differ. Geom.  {\bf 13}, Int. Press, Somerville,
MA, 2009.

\bibitem[GKi]{GKi} K.~ Grove and C.-W.~ Kim,
Positively curved manifolds with low fixed point cohomogeneity,
\emph{J. Differential Geom.} {\bf  67} (2004), 1--33.


\bibitem[GVZ]{GVZ} K. Grove, L. Verdiani and W. Ziller,
An exotic $T_1\Sph^4$ with positive curvature, \emph{Geom. Funct. Anal.} {\bf 21} (2011), 499--524.


\bibitem[GWZ]{GWZ}
K.~Grove, B.~Wilking and W. Ziller,
 Positively curved cohomogeneity one manifolds and 3-Sasakian geometry, \emph{ J. Differential Geom.} {\bf78} (2008), 33--111.

\bibitem[GZ]{GZ}
 K. Grove and W. Ziller,
Polar Actions and Manifolds, \emph{The Journal of Fixed Point Theory and Applications} {\bf11} (2012),  279--313


\bibitem[GKMW]{GKMW}
T. Grundh\"ofer,  L. Kramer,  H. Van Maldeghem, and R.M. Weiss,
Compact totally disconnected Moufang buildings, \emph{Tohoku Math. J.} (2) \textbf{64} (2012), 333-360.


\bibitem[HPPT]{HPTT}
E. Heintze, R.S. Palais, C.-L. Terng, and G. Thorbergsson,
Hyperpolar actions on symmetric spaces.  \emph{Geometry, topology,
\& physics,} 214--245, Conf. Proc. Lecture Notes Geom. Topology, IV,
Int. Press, Cambridge, MA, 1995.

\bibitem[He]{He}
S. Helgason, \emph{Differential geometry, Lie groups, and symmetric
spaces}, Graduate Studies in Mathematics {\bf 34}. American Mathematical
Society, Providence, RI, 2001.


\bibitem[Iw]{iwata}
K.~Iwata, Classification of compact transformation groups on
cohomology quaternion projective spaces with codimension one
orbits,  \emph{Osaka J. Math.} \textbf{15} (1978), 475--508.



\bibitem[KL] {KL} L. Kramer and A. Lytchak, Homogeneous compact geometries, \emph{Transform. Groups} \textbf{19} (2014), 793-852.


\bibitem[Ly]{Ly}
A. Lytchak, Polar foliations of symmetric spaces, \emph{Geom. Funct. Anal.} \textbf{24} (2014),1298-1315.


\bibitem[Me]{Me}
R. Mendes, Extending tensors on polar manifolds, \emph{Math. Annalen}, \textbf{365} (2016), 1409--1424.

\bibitem[MZ]{MZ}
D. Montgommery and L. Zippin, \emph{Topological transformation groups}, Interscience Publishers, New York-London, 1955.

\bibitem[Ne]{Ne}
A. Neumaier, Some sporadic geometries related to $\rm{PGL}(3,2)$
\emph{Arch. Math.}  {\bf 42} (1984), 89--96.

\bibitem[On]{On}
A. Onishchik, \emph{Topology of transitive transformation groups.}
Johann Ambrosius Barth Verlag GmbH, Leipzig, 1994.

\bibitem[OT]{OT}
H. Ozeki and M. Takeuchi, On some types of isoparametric
hypersurfaces in spheres. I, II. \emph{T\^ohoku Math. J.}  {\bf 27}
(1975), 515--559; {\bf 28} (1976), 7--55.


\bibitem[PTe]{PTe}
R. S.~Palais and C.-L.~Terng,
     A general theory of canonical forms, \emph{Trans. Amer. Math. Soc.}
     \textbf{300}, (1987), 771--789.

     \bibitem[Pe]{Pe}
     P.~Petersen, \emph{Riemannian Geometry}, Second edition. Graduate Texts in Mathematics {\bf 171}. Springer, New York (2006).


 \bibitem[PTh]{PTh} F. Podest\`a and G. Thorbergsson,  Polar actions on rank-one symmetric spaces.
 \emph{J. Differential Geom.} {\bf 53}, (1999), 131--175.

 \bibitem[Po]{Po} J. Poncet,  Groupes de Lie compacts de transformations de l'espace euclidien et les sph\`eres comme espaces homog\`enes,  \emph{Comment. Math. Helv.} {\bf 33}, (1959), 109--120.


\bibitem[Ro]{Ro}
M. Ronan, \emph{Lectures on buildings.} Perspectives in Mathematics
{\bf 7}. Academic Press, Inc., Boston, MA, 1989.


\bibitem[Sz]{Sz}
J. Szenthe, Orthogonally transversal submanifolds and the
generalizations of the Weyl group, \emph{Period. Math. Hungar.} {\bf
15} (1984), 281--299.


\bibitem[Th]{Th}
G.~Thorbergsson,
 \emph{Isoparametric foliations and their buildings}, Ann. of Math. \textbf{133} (1991), 429--446.



  \bibitem[Ti1]{Ti1}
  J. Tits,
  \emph{Buildings of spherical type and finite BN-pairs.}  Lecture Notes in Mathematics {\bf 386}. Springer-Verlag, Berlin-New York, 1974.

 \bibitem[Ti2]{Ti2}
 J. Tits,
  A local approach to buildings.
  \emph{The geometric vein}.
  The Coxeter Festschrift. Edited by C. Davis, B. Gr\"unbaum, and F. A. Sherk,
  519--547, Springer, New York-Berlin, 1981.


  \bibitem[Uc]{uchida} F.~Uchida,
 Classification of compact transformation groups on cohomology
complex projective spaces with codimension one orbits, \emph{Japan J. Math.} {\bf 3} (1977), 141-189.

\bibitem[Ui]{Ui}
J. Ucci, On the nonexistence of Riemannian submersions from $\CP^7$ and $\Bbb {QP}^3$, \emph{Proc. Amer. Math.Soc.} \textbf{88} (1983), 698--700.

\bibitem[VZ]{VZ}
L. Verdiani and W. Ziller, Concavity and rigidity in non-negative curvature, \emph{J. Differential Geom.} \textbf{97} (2014), 349-375.


\bibitem[Ve]{Ve}
L. Verdiani, Cohomogeneity one manifolds of even dimension with
strictly positive sectional curvature, \emph{J. Differential Geom.},
{\bf 68} (2004),  31--72.

\bibitem[Wa]{Wa} G. Walschap,
Metric foliations and curvature. \emph{J. Geom. Anal.} {\bf 2}
(1992), 373--381.


\bibitem[Wi1]{Wi1}
 B.~Wilking,
Nonnegatively and positively curved manifolds, \emph{Surveys in
differential geometry. Vol. XI}, 25--62, Surv. Differ. Geom. {\bf11},
Int. Press, Somerville, MA, 2007.

\bibitem[Wi2]{Wi2} B.~Wilking,  Positively curved manifolds with symmetry.  \emph{Ann. of Math.} \textbf{163}  (2006),  607--668.

\bibitem[Wi3]{Wi3} B.~Wilking, A duality theorem for Riemannian foliations in nonnegative sectional curvature.  \emph{Geom. Funct. Anal. } {\bf 17}  (2007),  1297--1320.


\bibitem[W\"{o}]{Wo} A. ~W\"{o}rner,
 \emph{A splitting theorem for nonnegatively curved Alexandrov spaces}, Geom. Topol. \textbf{16} (2012), 2391--2426.

\bibitem[Zi]{Zi}
W.~Ziller, Examples of Riemannian manifolds with non-negative
sectional curvature, \emph{Surveys in differential geometry. Vol. XI},
63--102, Surv. Differ. Geom. {\bf 11}, Int. Press, Somerville, MA, 2007.

\end{thebibliography}
\end{document}